\documentclass[11pt]{article}

\usepackage{amsmath, amssymb, amsfonts}
\usepackage{mathtools}
\usepackage{amsthm}
\usepackage{bm}
\usepackage{bbm}  
\usepackage{yhmath}
\usepackage{geometry}
\usepackage[english]{babel}
\usepackage[babel]{microtype}

\usepackage[shortlabels]{enumitem}

\usepackage{url}
\usepackage{csquotes}

\usepackage{hyperref}
\usepackage[capitalise]{cleveref}

\newtheorem{theorem}{Theorem}[section]
\newtheorem{prop}[theorem]{Proposition}
\newtheorem{lemma}[theorem]{Lemma}
\newtheorem{cor}[theorem]{Corollary}

\newtheorem{claim}[theorem]{Claim}

\theoremstyle{definition}
\newtheorem{defn}[theorem]{Definition}
\newtheorem*{defn-non}{Definition}

\newlist{Case}{enumerate}{2}
\setlist[Case, 1]{%
    label           =   {\bfseries Case \arabic*.},
    labelindent=1em ,labelwidth=1.3cm, labelsep*=1em, leftmargin =!
}
\setlist[Case, 2]{%
    label           =   {\bfseries Subcase \arabic{Casei}.\arabic*.},
    labelindent=-1em ,labelwidth=1.3cm, labelsep*=1em, leftmargin =!
}

\newenvironment{poc}{\begin{proof}[Proof of the claim]}{\end{proof}}

\counterwithin{figure}{section}

\title{Monochromatic unit equilateral triangle on low-dimensional spheres}

\author{
Xiaochen Zhao\thanks{School of Mathematical Sciences, Capital Normal University, Beijing, China. Email: 3535935416@qq.com.}
\and
Gennian Ge\thanks{School of Mathematical Sciences, Capital Normal University, Beijing, China. Email: gnge@zju.edu.cn. Gennian Ge is supported by the National Key Research and Development Program of China under Grant 2025YFC3409900, the National Natural Science Foundation of China under Grant 12231014, and Beijing Scholars Program.}
}

\date{}

\begin{document}
\maketitle

\begin{abstract}
A result of Matou\v{s}ek and R\"odl in 1995 states that for every $\varepsilon>0$ and every triangle $T$ with circumradius $\rho(T)$, there exists a dimension $n=n(\varepsilon,T)$ such that every $2$-coloring of the $n$-dimensional sphere of radius $\rho(T)+\varepsilon$, namely $\mathbb{S}^{n}(\rho(T)+\varepsilon)$, contains a monochromatic congruent copy of $T$. In this paper, we determine the exact threshold dimension for the unit equilateral triangle on the sphere $\mathbb{S}^{n}(1/\sqrt{2})$: there exists a $2$-coloring of $\mathbb{S}^{2}(1/\sqrt{2})$ with no monochromatic unit equilateral triangle, whereas every $2$-coloring of $\mathbb{S}^{3}(1/\sqrt{2})$ contains one. Along the way, we also establish several further Euclidean Ramsey-type results on low-dimensional spheres, including asymmetric and isosceles variants.
\end{abstract}

\section{Introduction}
\label{sec:introduction}

For an integer $n\ge 1$, a point $\boldsymbol{x}\in \mathbb{R}^{n+1}$, and a real number $r>0$, let
\[
\mathbb{S}^n(\boldsymbol{x},r):=\{\boldsymbol{y}\in\mathbb{R}^{n+1}:\|\boldsymbol{y}-\boldsymbol{x}\|=r\}
\]
be the $n$-dimensional sphere of radius $r$ centered at $\boldsymbol{x}$, and write $\mathbb{S}^n(r):=\mathbb{S}^n(\boldsymbol{0},r)$. Throughout the paper, all distances are Euclidean. Points on spheres are denoted by boldface letters such as
$\boldsymbol{x},\boldsymbol{y},\boldsymbol{z}$. The Euclidean norm is denoted by $\|\cdot\|$, and the inner product by $\langle \cdot, \cdot \rangle$ or simply by a dot, as in $\boldsymbol{x}\cdot\boldsymbol{y}$; the cross product is denoted by $\times$, as in $\boldsymbol{x}\times\boldsymbol{y}$. 
\subsection{Background}

Euclidean Ramsey theory originates in the seminal work of Erd\H{o}s, Graham, Montgomery, Rothschild, Spencer, and Straus~\cite{1973JCTA-erdos-euclidean,erdos1975euclidean2,1975JCT-erdos-euclidean3}. Given a finite set $X\subseteq \mathbb{R}^m$, they asked whether there exists an integer $N=N(X,r)$ such that every $r$-coloring of $\mathbb{R}^N$ contains a monochromatic congruent copy of $X$. A finite set with this property is called a \emph{Ramsey set}. In the same paper, they proved that every Ramsey set must be spherical, that is, must lie on some sphere. They also showed that the unit equilateral triangle is not $2$-Ramsey in the plane, and conjectured that every non-equilateral triangle is $2$-Ramsey in $\mathbb{R}^2$.

This conjecture remains widely open. It is known, however, that various special triangles do satisfy the $2$-Ramsey property in the plane; see, for example, the results of Erd\H{o}s, Graham, Montgomery, Rothschild, Spencer, and Straus~\cite{1975JCT-erdos-euclidean3}, Shader~\cite{1976JCT-shader-right-triangles}, and Jel\'inek, Kyn\v{c}l, Stola\v{r}, and Valla~\cite{2009Comb-jelinek-monochromatic}. More recently, Currier, Moore, and Yip~\cite{2024JCTA-currier-Moore-Yip-3-term-arithmetic-progressions} proved that the degenerate configuration consisting of three collinear points with unit spacing is also $2$-Ramsey.

A major breakthrough was achieved by Frankl and R\"odl~\cite{1990JAMS-frankl-partition}, who proved that every simplex is Ramsey; in fact, their argument yields a substantially stronger exponential Ramsey property in high-dimensional Euclidean space. A related direction, introduced by Graham, is to seek monochromatic copies inside a single sphere rather than in the whole ambient space. Following Graham~\cite{1985-graham-old-new}, one calls a finite set $X$ \emph{sphere Ramsey} if for every $r\ge 2$ there exist a dimension $N$ and a radius $p$ such that every $r$-coloring of the sphere $\mathbb{S}^{N-1}(p)$ contains a monochromatic congruent copy of $X$. For a spherical set $X$, let $\rho(X)$ denote its circumradius. Graham asked whether, for a given spherical configuration $X$, one may take the radius to be arbitrarily close to $\rho(X)$; see also Graham~\cite{1983-graham-sphere,1990-graham-topics}. For simplices, this question was answered in a striking way by Matou\v{s}ek and R\"odl~\cite{1995JCTA}.

\begin{theorem}[Matou\v{s}ek--R\"odl~\cite{1995JCTA}]\label{thm:MR-positive}
Let $X$ be a simplex with circumradius $\rho(X)$. Then for every integer $r\ge 2$ and every $\delta>0$, there exists $N=N(X,r,\delta)$ such that every $r$-coloring of $\mathbb{S}^{N-1}(\rho(X)+\delta)$ contains a monochromatic congruent copy of $X$.
\end{theorem}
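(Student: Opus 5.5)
The plan is to realize $X$ inside a large, highly symmetric point set on a sphere of radius exactly $\rho(X)$, and then let a density-type combinatorial theorem produce a monochromatic copy of $X$ that is only slightly rescaled. Write $X=\{\boldsymbol{v}_0,\dots,\boldsymbol{v}_m\}$ with circumcenter $\boldsymbol{0}$, so that $\|\boldsymbol{v}_j\|=\rho:=\rho(X)$, and put $d_{jk}=\|\boldsymbol{v}_j-\boldsymbol{v}_k\|^2$. For a word $f\in\{0,\dots,m\}^M$ define
\[
\boldsymbol{p}_f=\tfrac{1}{\sqrt{M}}\bigl(\boldsymbol{v}_{f(1)},\dots,\boldsymbol{v}_{f(M)}\bigr)\in\mathbb{R}^{mM}.
\]
Every $\boldsymbol{p}_f$ lies on $\mathbb{S}^{mM-1}(\rho)$, and
\[
\|\boldsymbol{p}_f-\boldsymbol{p}_g\|^2=\sum_{a\neq b}\frac{|\{i: f(i)=a,\ g(i)=b\}|}{M}\,\|\boldsymbol{v}_a-\boldsymbol{v}_b\|^2 .
\]
So the geometry of $\{\boldsymbol{p}_f\}$ is governed entirely by the joint letter-frequency statistics of pairs of words. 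An $r$-coloring of a sphere induces an $r$-coloring of the words. The task is therefore to find $m+1$ monochromatic words $f_0,\dots,f_m$ whose pairwise statistics make $\|\boldsymbol{p}_{f_j}-\boldsymbol{p}_{f_k}\|^2=\lambda\, d_{jk}$, with $\lambda\ge (1+\delta/\rho)^{-2}$ the same for all pairs. Rescaling by $\lambda^{-1/2}$ then gives a monochromatic congruent copy of $X$ on a sphere of radius $\rho\lambda^{-1/2}\le\rho+\delta$. Radii between these values are handled by the freedom in choosing $\lambda$, or by first passing to a slightly smaller $\delta$.

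The naive approach is Hales--Jewett. A monochromatic combinatorial line with wildcard set $S$ gives exactly such words with $\lambda=|S|/M$. However, Hales--Jewett gives no lower bound on $|S|/M$, and this is precisely why it only yields some large radius rather than $\rho+\delta$. The fixed coordinates contribute nothing to distances but everything to the radius. So I would instead aim for configurations in which the words \emph{differ on a $\lambda$-fraction of coordinates, with a prescribed joint pattern}. Concretely, I would restrict to words from a fixed ``type class'' $\Omega$: each letter $a$ occurs a prescribed number of times. I would then look for $(m+1)$-tuples in $\Omega$ such that, on a set of density $\lambda$, the tuple $(f_0(i),\dots,f_m(i))$ equals $(0,1,\dots,m)$ up to a uniform cyclic pattern, while all $f_j$ agree elsewhere. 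Such a tuple has all pairwise distances $\lambda d_{jk}$ by the formula above.

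The key step, and the main obstacle, is a density statement. Every subset of $\Omega$ of measure at least $1/r$ must contain such a tuple with $\lambda$ close to $1$; in fact the relevant exponent only needs to be bounded away from the trivial one. One color class has density $\ge 1/r$, so this would finish the proof. This is a forbidden-configuration (forbidden-intersection) type theorem, and I would invoke the Frankl--R\"odl forbidden intersection machinery~\cite{1990JAMS-frankl-partition}. That theorem says a family in a product space avoiding a prescribed ``agreement pattern'' of positive proportion must be exponentially small. The pattern here is determined by the joint statistics of the tuple, which lie strictly inside the simplex of allowed statistics, and this is where simplicity of $X$ matters: affine independence lets one choose the letter frequencies so that the target tuple is realizable in $\Omega$ with a nondegenerate pattern. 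Applying the theorem first to pairs and then inductively to $(m+1)$-tuples, for instance via a partite/amalgamation step or by applying the theorem to an auxiliary product space, gives the required tuple for $M$ large. The induction from pairs to full $(m+1)$-tuples is the delicate part. I would first try to encode the whole $(m+1)$-tuple as a single pair in a larger alphabet $\{0,\dots,m\}^{m+1}$, so that one application of the exponential Frankl--R\"odl bound suffices. Taking $N=mM$ then completes the argument.
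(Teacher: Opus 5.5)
\textbf{Note.} The paper does not prove this statement; it quotes it from Matou\v{s}ek--R\"odl~\cite{1995JCTA}. So your sketch has to stand on its own.

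\textbf{The pattern step fails.} The sketch breaks at the step you treat as bookkeeping: producing words $f_0,\dots,f_m$ in a single type class $\Omega$ with $\|\boldsymbol{p}_{f_j}-\boldsymbol{p}_{f_k}\|^2=\lambda d_{jk}$. Neither reading of your pattern works.
\begin{itemize}
\item If the columns on the density-$\lambda$ set are literally $(0,1,\dots,m)$, then $f_j$ contains the letter $j$ exactly $\lambda M$ more times than $f_k$ does. So the $f_j$ lie in different type classes.
\item If the cyclic shifts $(s,s+1,\dots,s+m)$ are used equally often, which is what keeps all $f_j$ in $\Omega$, then $\|\boldsymbol{p}_{f_j}-\boldsymbol{p}_{f_k}\|^2=\frac{\lambda}{m+1}\sum_{s}d_{s+j,\,s+k}$. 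This depends only on $k-j \pmod{m+1}$. For a triangle, all three sides become $\frac{\lambda}{3}(d_{01}+d_{02}+d_{12})$, so you get an equilateral triangle, not $X$.
\end{itemize}
This is harmless when the vertices of $X$ admit a cyclic symmetry (a segment, a regular simplex), but not in general.

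\textbf{It cannot be repaired inside your embedding.} Let $m\ge 2$ and let $X$ be generic, with squared edge lengths algebraically independent. Comparing coefficients in your distance formula shows that any tuple $\{\boldsymbol{p}_{f_j}\}$ congruent to $\sqrt{\lambda}X$ has, after relabeling, every column either constant or exactly $(0,1,\dots,m)$.

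Now colour $\boldsymbol{p}_f$ red if the letter $0$ occurs in $f$ more than $M/2$ times and blue otherwise, and extend arbitrarily to the sphere. For $\lambda>1/2$, the word $f_0$ has at least $\lambda M>M/2$ zeros, while $f_1$ has at most $(1-\lambda)M<M/2$. So no such tuple is monochromatic. Hence no density theorem can give $\lambda\ge(1+\delta/\rho)^{-2}$ from this point set once $\delta<(\sqrt2-1)\rho$.

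\textbf{What is missing.} You need an alphabet richer than the vertex set of $X$. For instance: a finite configuration on a sphere of radius $\rho$ centred at the circumcentre of $X$, carrying labeled congruent copies of $X$ in which every point plays each of the $m+1$ vertex roles equally often. Columns drawn from these copies, mixed with a $(1-\lambda)$-fraction of constant columns, would then give words of equal type and exact distances $\lambda d_{jk}$. Constructing such a configuration for an arbitrary simplex is a genuine geometric lemma, and your sketch does not supply it.

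\textbf{A secondary gap.} Encoding an $(m+1)$-tuple as a single pair over a larger alphabet does not reduce the problem to the two-word Frankl--R\"odl bound. A tuple of members of one colour class is not a pair of members of any family to which that bound applies. You need a genuine multi-word forbidden-configuration theorem. One of its hypotheses is that all words of the pattern share one type, which is exactly what fails above.
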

Thus every simplex is sphere Ramsey on every slightly enlarged sphere. What makes the above theorem especially interesting is that the same paper also proves a complementary negative result showing that, in general, this enlargement cannot be removed. Later, Frankl and R\"odl~\cite{2004ISJM-frankl-strongRamsey} strengthened Theorem~\ref{thm:MR-positive} in a different direction by proving that every simplex is in fact \emph{strong Ramsey}, that is, the same conclusion continues to hold even for colorings with exponentially many colors.

On the one hand, every simplex becomes Ramsey on spheres once the radius is enlarged by an arbitrarily small amount. On the other hand, at the exact circumradius, a monochromatic copy may already fail to exist in every dimension. From this perspective, it is natural to ask what happens in concrete low dimensions for specific simplices and specific radii.

The purpose of this paper is to complement the high-dimensional theory above by establishing several sharp low-dimensional Ramsey results for special triangles like equilateral triangles and isosceles triangles on spheres. Such triangles have been widely studied in Euclidean Ramsey Theory~\cite{2006Graham,2020BLMS}.

\subsection{Our contributions}
For finite point configurations $A$ and $B$, and a sphere $\mathbb{S}$, we write $\mathbb{S}\rightarrow (A;B)$ if every red--blue coloring of $\mathbb{S}$ contains either a red congruent copy of $A$ or a blue congruent copy of $B$; we write $\mathbb{S}\nrightarrow (A;B)$ for the negation. For $a>0$, let $R_a$ denote the equilateral triangle of side length $a$, let $T_a$ denote a pair of points at distance $a$, and let $I_a$ denote the isosceles triangle with side lengths $a,1,1$. We write $R:=R_1$.

Since $\mathbb{S}^n(r)$ appears as an equator of $\mathbb{S}^{n+1}(r)$, the property
\(
\mathbb{S}^n(r)\rightarrow (A;A)
\)
is monotone in $n$. Accordingly, whenever this property holds for some $n$, it is natural to define the threshold dimension
\[
N(A,r):=\min\{n\ge 1:\mathbb{S}^n(r)\rightarrow (A;A)\}.
\]
Our first objective is to determine this threshold for the equilateral triangle $R$ at the radius $r=1/\sqrt{2}$.

The starting point is the following asymmetric Ramsey theorem on the $2$-sphere.

\begin{theorem}\label{thm:main3}
For every $r>0$ and every $a>0$ satisfying $a/r\le \sqrt{3}$,
\[
\mathbb{S}^2(r)\rightarrow (T_a;R_a).
\]
\end{theorem}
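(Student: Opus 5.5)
We argue by contradiction. Fix a red--blue coloring of $\mathbb{S}^2(r)$ with no red pair at distance $a$ and no blue copy of $R_a$. The coloring cannot be entirely blue, since $a\le\sqrt3 r$ means $R_a$ is inscribable in $\mathbb{S}^2(r)$ (its circumradius is $a/\sqrt3\le r$). So there is a red point $\boldsymbol p$. Let
\[
C_{\boldsymbol p}:=\{\boldsymbol x\in\mathbb{S}^2(r):\|\boldsymbol x-\boldsymbol p\|=a\}.
\]
This is a circle, and since the red set has no pair at distance $a$, $C_{\boldsymbol p}$ is entirely blue.

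A direct computation gives the Euclidean radius of $C_{\boldsymbol p}$ as $a\sqrt{1-a^2/(4r^2)}$. This radius is at least $a/2$ exactly when $a\le\sqrt3 r$, and this is where the hypothesis enters: $C_{\boldsymbol p}$ contains chords $\boldsymbol u\boldsymbol v$ of length $a$.

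\textbf{Reflection step.} For such a chord, the points of $\mathbb{S}^2(r)$ at distance $a$ from both $\boldsymbol u$ and $\boldsymbol v$ are $\boldsymbol p$ itself and its mirror image $\boldsymbol p'$ in the plane through $\boldsymbol 0,\boldsymbol u,\boldsymbol v$. Since $\boldsymbol u,\boldsymbol v$ are blue, $\boldsymbol p'$ must be red, otherwise $\boldsymbol u\boldsymbol v\boldsymbol p'$ is a blue $R_a$.

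Rotating the chord around $C_{\boldsymbol p}$ rotates $\boldsymbol p'$ about the axis through $\boldsymbol p$. Hence every point at some fixed distance $d=d(a,r)$ from $\boldsymbol p$ is red. We compute $d$ explicitly and check that $0<d<2r$ whenever $a<\sqrt3 r$.

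\textbf{Propagation step.} The property ``$\boldsymbol q$ red $\Rightarrow$ the whole $d$-circle around $\boldsymbol q$ is red'' holds for every red point, so we iterate it. Red points at distance $d$ from $\boldsymbol p$ have red $d$-circles, and the union of these circles is a closed annulus/cap around $\boldsymbol p$ of angular radius $2\theta_d$, where $\theta_d$ is the angular radius corresponding to $d$. Repeating, the red region grows until it is the whole sphere; this uses the fact that circles of a fixed non-degenerate radius connect any two points in finitely many steps. In particular two points at distance $a$ are red, a contradiction.

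\textbf{Boundary case $a=\sqrt3 r$.} I expect this to be the main obstacle. Here $C_{\boldsymbol p}$ has radius exactly $a/2$, the only length-$a$ chords are diameters of $C_{\boldsymbol p}$, and the reflection gives $\boldsymbol p'=\boldsymbol p$, so the argument degenerates. In this case the copies of $R_a$ are exactly the triples at mutual angle $120^\circ$ on great circles, and $C_{\boldsymbol p}$ is the circle at angular distance $60^\circ$ from $-\boldsymbol p$.

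What I would try first is to pick $\boldsymbol u\in C_{\boldsymbol p}$ and a great circle through $\boldsymbol u$ other than the one through $\boldsymbol p$, giving a candidate triangle $\boldsymbol u\boldsymbol v\boldsymbol w$. If this triangle is not blue, one of $\boldsymbol v,\boldsymbol w$ is a second red point $\boldsymbol q$, and $C_{\boldsymbol q}$ is also blue. I would then choose the great circle so that $\boldsymbol v$ and $\boldsymbol w$ lie in $C_{\boldsymbol p}\cup C_{\boldsymbol q}$ for a suitable red $\boldsymbol q$, giving a blue triangle. The difficulty is arranging the geometry of two blue $60^\circ$-circles (around $-\boldsymbol p$ and $-\boldsymbol q$) so that they must contain a $120^\circ$ great-circle triple.

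If that fails, a fallback is to show that the red set is contained in an open hemisphere-type region disjoint from its own $120^\circ$-translates. One would then find a great circle whose points at mutual $120^\circ$ avoid the red set, using that each red point blocks only a measure-zero set of great-circle triples through a fixed point.
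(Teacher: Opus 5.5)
There is a genuine gap: your argument misses a second degenerate value of $a/r$, and it leaves the boundary case unproved.

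Your generic case is fine. The red set contains, around each of its points, the cap of angular radius $\min(2\theta_d,2\pi-2\theta_d)>0$ (not $2\theta_d$ in general). So it is open and closed, hence the whole sphere, which is a tidy alternative to the paper's arc-growing along a great circle.

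The problem is your assertion that $0<d<2r$ for all $a<\sqrt3\,r$, which is false. With $t=a/r$, your reflection distance is $d=2a\sqrt{(3-t^2)/(4-t^2)}$, and $4r^2-d^2=4r^2(t^2-2)^2/(4-t^2)$. So $d=2r$ exactly when $a=\sqrt2\,r$.

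Geometrically, $C_{\boldsymbol p}$ is then the great circle $\boldsymbol p^{\perp}$. The plane through $\boldsymbol 0,\boldsymbol u,\boldsymbol v$ is $\boldsymbol p^{\perp}$ itself, so $\boldsymbol p'=-\boldsymbol p$. Your ``$d$-circle'' is a single point, and propagation only says that antipodes of red points are red, which grows nothing.

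This is not a marginal case. It is exactly $r=1/\sqrt2$, $a=1$, i.e.\ Corollary~\ref{cor:2sphere-ramsey}, which feeds Theorems~\ref{thm:main1}(2) and~\ref{thm:main5}. The paper needs a separate, substantial argument for it. It shows that $\boldsymbol x$ is red iff the great circle $\boldsymbol x^{\perp}$ is entirely blue, and blue iff orthogonal pairs on $\boldsymbol x^{\perp}$ are oppositely colored. From this it builds a closed red curve $\Gamma$. It then shows that a strip $\mathbb E$ covered by the great circles $\boldsymbol u^{\perp}$, $\boldsymbol u\in\Gamma$, is blue, deduces a red circle $\mathcal S$ about a fixed axis, and enlarges it twice to a red circle of diameter greater than $1$.

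The case $a=\sqrt3\,r$ you correctly identify as degenerate ($\boldsymbol p'=\boldsymbol p$), but you do not prove it. In both your first attempt and the fallback, the decisive step is only named, not carried out: forcing a blue great-circle triple at mutual angle $120^\circ$, or confining the red set.

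The paper again uses a two-sided characterization, now with $\mathcal P(\boldsymbol x)=\{\boldsymbol y:\boldsymbol x\cdot\boldsymbol y=-r^2/2\}$, on which antipodal pairs are at distance $a$. A red point $\boldsymbol x$ has $\mathcal P(\boldsymbol x)$ all blue, and a blue point has oppositely colored antipodal pairs on $\mathcal P(\boldsymbol x)$. From a red $\boldsymbol b$ it obtains a second red point $\boldsymbol a_2$. For each $\boldsymbol k\in\mathcal P(\boldsymbol b)$, the blue points of $\mathcal P(\boldsymbol k)\cap\mathcal P(\boldsymbol a_2)$ have red antipodes on $\mathcal P(\boldsymbol k)$, and these trace a closed red curve $\Gamma$. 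One more red point $\boldsymbol q$ has its all-blue circle $\mathcal P(\boldsymbol q)$ meeting $\Gamma$ by the intermediate value theorem, a contradiction.

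As it stands, your proposal proves the theorem only for $a/r\notin\{\sqrt2,\sqrt3\}$.
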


The condition $a/r\le \sqrt{3}$ is best possible, since an equilateral triangle of side length $a$ can be inscribed in $\mathbb{S}^2(r)$ if and only if $a/r\le \sqrt{3}$. As an immediate consequence of Theorem~\ref{thm:main3}, we obtain the following special case, which will be used repeatedly later.

\begin{cor}\label{cor:2sphere-ramsey}
\[
   \mathbb{S}^{2}(1/\sqrt{2})\rightarrow (T_1;R).
\]

\end{cor}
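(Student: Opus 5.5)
This follows from Theorem~\ref{thm:main3} by specialising the parameters. We take $r=1/\sqrt{2}$ and $a=1$. Then $T_a=T_1$ and $R_a=R_1=R$, and the conclusion of Theorem~\ref{thm:main3} reads $\mathbb{S}^2(1/\sqrt{2})\rightarrow(T_1;R)$, which is exactly the claimed statement. It remains to check the hypothesis of Theorem~\ref{thm:main3}:
\[
\frac{a}{r}=\frac{1}{1/\sqrt{2}}=\sqrt{2}\le\sqrt{3}.
\]
So the theorem applies.

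Concretely, the statement says the following. Every red--blue coloring of $\mathbb{S}^2(1/\sqrt{2})$ either has two red points at distance $1$, or has a blue unit equilateral triangle. Such triangles exist on this sphere: an equilateral triangle of side $1$ has circumradius $1/\sqrt{3}<1/\sqrt{2}$, so it sits in a small circle of the sphere. Thus the statement is not vacuous.

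There is no real obstacle here, since all the work lies in Theorem~\ref{thm:main3}. The corollary is isolated because this particular radius is the one used in the threshold results for $N(R,1/\sqrt{2})$. At that radius, unit distance corresponds to orthogonal position vectors, since $\|\boldsymbol{x}-\boldsymbol{y}\|^2=1-2\,\boldsymbol{x}\cdot\boldsymbol{y}$ on $\mathbb{S}^2(1/\sqrt{2})$.
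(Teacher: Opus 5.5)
Your proposal is correct and matches the paper: the corollary is an immediate specialisation of Theorem~\ref{thm:main3} with $r=1/\sqrt{2}$ and $a=1$, and $a/r=\sqrt{2}\le\sqrt{3}$ checks the hypothesis. All the substantive work is in the proof of Theorem~\ref{thm:main3}, where $a/r=\sqrt{2}$ is the degenerate case the paper handles separately.
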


We next turn to the symmetric Ramsey problem for $R$ on spheres of radius $1/\sqrt{2}$.

\begin{theorem}\label{thm:main1}\
\begin{enumerate}
    \item[\textup{(1)}] \(\mathbb{S}^{2}(1/\sqrt{2})\nrightarrow (R;R).\)
      
    \item[\textup{(2)}] \(\mathbb{S}^{3}(1/\sqrt{2})\rightarrow (R;R).\)
\end{enumerate}
\end{theorem}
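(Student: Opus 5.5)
Throughout, the key observation is that on a sphere of radius $1/\sqrt{2}$ centred at the origin, two points $\boldsymbol{x},\boldsymbol{y}$ are at distance $1$ iff $\|\boldsymbol{x}-\boldsymbol{y}\|^2=1-2\,\boldsymbol{x}\cdot\boldsymbol{y}=1$, i.e.\ iff $\boldsymbol{x}\perp\boldsymbol{y}$. Hence a congruent copy of $R$ on $\mathbb{S}^n(1/\sqrt{2})$ is exactly a triple of pairwise orthogonal points. Both parts will be handled in this language.

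For part (2), I would derive the statement directly from Corollary~\ref{cor:2sphere-ramsey}. Take a red--blue coloring of $\mathbb{S}^3(1/\sqrt{2})$. If there is no red point, any copy of $R$ is blue. Otherwise fix a red point $\boldsymbol{p}$ and consider the great $2$-sphere $E=\boldsymbol{p}^{\perp}\cap\mathbb{S}^3(1/\sqrt{2})$. This is an isometric copy of $\mathbb{S}^2(1/\sqrt{2})$. By Corollary~\ref{cor:2sphere-ramsey}, $E$ contains either a red pair at distance $1$ or a blue copy of $R$. In the second case we are done. In the first case, the red pair $\boldsymbol{x},\boldsymbol{y}\in E$ is orthogonal, and both points are orthogonal to $\boldsymbol{p}$, so $\{\boldsymbol{p},\boldsymbol{x},\boldsymbol{y}\}$ is a red copy of $R$. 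The only thing to check is that $E$ genuinely carries the induced coloring of a sphere of radius $1/\sqrt{2}$, which is immediate.

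For part (1), I would write $\boldsymbol{x}=(x_1,x_2,x_3)$ and use that for an orthogonal triple $\boldsymbol{u},\boldsymbol{v},\boldsymbol{w}$ on $\mathbb{S}^2(1/\sqrt{2})$ one has $u_3^2+v_3^2+w_3^2=1/2$, since $\sqrt{2}\boldsymbol{u},\sqrt{2}\boldsymbol{v},\sqrt{2}\boldsymbol{w}$ form an orthonormal basis. The coloring is:
\begin{itemize}
\item color $\boldsymbol{x}$ red if $x_3^2>1/6$;
\item color $\boldsymbol{x}$ blue if $x_3^2<1/6$.
\end{itemize}
Then a monochromatic triple off the two boundary circles $x_3=\pm1/\sqrt{6}$ is impossible: three red points would give a sum $>1/2$, and three blue points a sum $<1/2$. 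A triple with some but not all points on the boundary is also never monochromatic. If, say, $k$ points are on the boundary with $0<k<3$, the remaining points cannot all be red or all be blue, again by the sum.

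The main obstacle is coloring the two boundary circles so that orthogonal triples lying entirely on them are not monochromatic. I would make the boundary coloring antipodally symmetric, so a triple may be replaced by $\pm$ versions of its points. This reduces to triples on the upper circle $x_3=1/\sqrt6$. There the horizontal parts have squared length $1/3$ and inner product $-1/6$, so the azimuths are pairwise $120^\circ$ apart. Coloring the upper circle red on the half-open arc $\theta\in[0,\pi)$ and blue on $[\pi,2\pi)$, and each lower point the same color as its antipode, then suffices. No half-open semicircle contains three points spaced $120^\circ$ apart, so no such triple is monochromatic.
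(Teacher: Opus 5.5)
Your proof is correct, and it departs from the paper in both parts.

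\textbf{Part (2).} The paper splits on whether some antipodal pair is bichromatic. It uses the Cherkashin--Voronov result (Lemma~\ref{lem:monoline}) in one case and Corollary~\ref{cor:2sphere-ramsey} in the other. You notice that all that is needed is a single red point $\boldsymbol{p}$, since the great $2$-sphere $\boldsymbol{p}^{\perp}\cap\mathbb{S}^3(1/\sqrt{2})$ lies at unit distance from it. So Corollary~\ref{cor:2sphere-ramsey} alone finishes the argument, and both the case split and the external lemma are superfluous for this theorem.

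\textbf{Part (1).} Both colorings rest on the same principle. For an orthogonal triple on $\mathbb{S}^2(1/\sqrt{2})$ one has $\sum_i \boldsymbol{d}_i\boldsymbol{d}_i^{T}=\tfrac12 I$, so $\sum_i Q(\boldsymbol{d}_i)=0$ for every traceless quadratic form $Q$, and one colors by the sign of $Q$.
\begin{enumerate}[(a)]
\item The paper takes the off-diagonal form $Q=yz$, whose zero set is two great circles. No orthogonal triple fits in one great circle, so the boundary needs only a dimension count plus the special treatment of the crossing points $\boldsymbol{e},\boldsymbol{f}$.
\item You take $Q=x_3^2-\tfrac16$, whose zero set is two latitude circles. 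These do carry orthogonal triples, pairwise $120^\circ$ apart in azimuth. You therefore pay with the antipodal sign-flip reduction and the half-open semicircle argument.
\end{enumerate}
In return, your coloring is rotationally symmetric about the $x_3$-axis away from the boundary. Your treatment of the mixed cases is also complete: one boundary point forces the other two onto opposite sides of $1/6$, and two boundary points force the third onto the boundary.
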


Thus the threshold dimension exists and is exact:
\(
N(R,1/\sqrt{2})=3.
\)
We also obtain an asymmetric statement on $\mathbb{S}^2(1/\sqrt{2})$ involving the isosceles right triangle $I_{\sqrt{2}}$.

\begin{theorem}\label{thm:main5}
\[
   \mathbb{S}^{2}(1/\sqrt{2})\rightarrow (I_{\sqrt{2}};R).
\]

\end{theorem}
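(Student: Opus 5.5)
Throughout, the geometry on $\mathbb{S}^2(1/\sqrt{2})$ becomes linear algebra. Two points are at distance $1$ exactly when they are orthogonal as vectors, and at distance $\sqrt{2}$ exactly when they are antipodal. So a copy of $R$ is an orthogonal triple $\{\boldsymbol{x},\boldsymbol{y},\boldsymbol{z}\}$. A copy of $I_{\sqrt{2}}$ is a triple $\{\boldsymbol{x},-\boldsymbol{x},\boldsymbol{y}\}$ with $\boldsymbol{y}\perp\boldsymbol{x}$, i.e.\ an antipodal pair together with a point of its polar great circle $\boldsymbol{x}^\perp$. I argue by contradiction: fix a red--blue coloring with no blue orthogonal triple and no red $I_{\sqrt{2}}$. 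The basic dichotomy is then: if $\boldsymbol{x}$ and $-\boldsymbol{x}$ are both red, the whole great circle $\boldsymbol{x}^\perp$ is blue.

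\textbf{Step 1: a red antipodal pair exists.} By Corollary~\ref{cor:2sphere-ramsey} there is a red orthogonal pair. To upgrade this, suppose instead that no antipodal pair is monochromatic red, so every red $\boldsymbol{x}$ has $-\boldsymbol{x}$ blue. Take a point $\boldsymbol{u}$ and look at the square $\boldsymbol{a},\boldsymbol{a}',-\boldsymbol{a},-\boldsymbol{a}'$ of four consecutive points at angle $90^\circ$ on a great circle.
\begin{itemize}
\item Together with the pole of that circle, every consecutive pair of the square forms an orthogonal triple.
\item Rotating the frame and using Corollary~\ref{cor:2sphere-ramsey} on suitable configurations should force two antipodal reds.
\end{itemize}
I would make this concrete by choosing an orthonormal frame $\boldsymbol{e}_1,\boldsymbol{e}_2,\boldsymbol{e}_3$, which is not entirely blue, and chasing colors among the $\pm\boldsymbol{e}_i$. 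The octahedron $\{\pm\boldsymbol{e}_i\}$ contains $8$ orthogonal triples. If no antipodal pair is red, then at least one point of each antipodal pair is blue. Choosing one blue point from each pair gives a blue orthogonal triple, a contradiction. So Step 1 is just this octahedron argument.

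\textbf{Step 2: exploiting a blue great circle.} Let $\boldsymbol{x},-\boldsymbol{x}$ be red, so $C=\boldsymbol{x}^\perp$ is entirely blue.
\begin{itemize}
\item For any $\boldsymbol{u}\in C$, and any orthogonal $\boldsymbol{a},\boldsymbol{b}\in\boldsymbol{u}^\perp$, the triple $\{\boldsymbol{u},\boldsymbol{a},\boldsymbol{b}\}$ is orthogonal. Since $\boldsymbol{u}$ is blue, $\boldsymbol{a}$ and $\boldsymbol{b}$ are not both blue.
\item Suppose $\boldsymbol{u}^\perp$ contained no blue point. Then it contains a red antipodal pair $\pm\boldsymbol{a}$, so every circle $\boldsymbol{a}^\perp$ with $\boldsymbol{a}\in\boldsymbol{u}^\perp$ is blue. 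These are all great circles through $\pm\boldsymbol{u}$ and they cover the sphere, which contradicts $\boldsymbol{u}^\perp$ being red. So $\boldsymbol{u}^\perp$ has a blue point $\boldsymbol{a}$.
\item Let $\boldsymbol{a}'\in\boldsymbol{u}^\perp$ with $\boldsymbol{a}'\perp\boldsymbol{a}$. Both $\boldsymbol{a}'$ and $-\boldsymbol{a}'$ are orthogonal to $\boldsymbol{a}$, so both are red. Hence $D=\boldsymbol{a}'^\perp$ is a second blue great circle. It contains $\boldsymbol{u}$ and $\boldsymbol{a}$.
\end{itemize}

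\textbf{Step 3: two orthogonal blue great circles give the contradiction.} If $C=\boldsymbol{n}_1^\perp$ and $D=\boldsymbol{n}_2^\perp$ are blue with $\boldsymbol{n}_1\perp\boldsymbol{n}_2$, take $\boldsymbol{w}=\boldsymbol{n}_1\times\boldsymbol{n}_2/\|\cdot\|$ suitably scaled, which lies in $C\cap D$. Then $\{\boldsymbol{w},\boldsymbol{n}_2,\boldsymbol{n}_1\}$ is an orthogonal triple, with $\boldsymbol{n}_2\in C$ and $\boldsymbol{n}_1\in D$. All three points are blue, a contradiction.

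So the plan is to steer Step 2 so that the new blue circle $D$ is orthogonal to $C$, i.e.\ $\boldsymbol{a}'\perp\boldsymbol{x}$, which means $\boldsymbol{a}'\in C$. That is impossible directly, since $C$ is blue and $\boldsymbol{a}'$ is red. So the contradiction must come from iterating. Each blue circle produces, through every one of its points $\boldsymbol{u}$, further blue circles through $\pm\boldsymbol{u}$. I would show that the family of normals to blue circles is closed under enough operations to contain an orthogonal pair. If it is not, the normals of blue circles form a set with no two orthogonal, and every point of each normal's polar circle is blue. Counting on a suitable octahedron built from $\boldsymbol{x}$, $\boldsymbol{u}$, $\boldsymbol{a}'$ should close this off.

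\textbf{Main obstacle.} The expected difficulty is Step 3: forcing orthogonality of two blue great circles, or otherwise producing a blue orthogonal triple from several non-orthogonal blue great circles. The colorings are arbitrary, so no continuity can be used. My first attempt is to exploit the freedom in the choice of $\boldsymbol{u}\in C$. The normals $\boldsymbol{a}'$ obtained as $\boldsymbol{u}$ ranges over $C$ should sweep out enough directions that one of them lies on some already-established blue circle's polar. I would then close with the octahedron argument of Step 1, applied to a frame containing $\boldsymbol{u}$ and chosen so that every antipodal pair in it has at least one blue point.
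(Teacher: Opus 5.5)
\textbf{Verdict: genuine gap.} Your Step~1 is correct: choosing a blue point from each antipodal pair of the octahedron $\{\pm\boldsymbol{e}_i\}$ would give a blue orthogonal triple. The local facts in Steps~2--3 are also fine. But the proof stops exactly where the difficulty lies. You never derive a contradiction from the red pair $\pm\boldsymbol{x}$ and its blue polar circle, and the proposed iteration and ``counting on a suitable octahedron'' cannot close it.

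To see why, consider colorings in which every antipodal pair is monochromatic. There your hypotheses say exactly that no two orthogonal points are red and no orthogonal triple is blue, i.e.\ every orthogonal triple contains exactly one red point. Excluding this is precisely Corollary~\ref{cor:2sphere-ramsey}, which is the three-dimensional Kochen--Specker theorem in disguise. The partial coloring you have forced (red $\pm\boldsymbol{x}$ and $\pm\boldsymbol{a}'$, blue circles $\boldsymbol{x}^\perp$ and $\boldsymbol{a}'^\perp$) violates neither constraint. Any finite obstruction would have to be a genuine Kochen--Specker configuration, far larger than an octahedron. The paper proves that corollary (the case $a/r=\sqrt2$ of Theorem~\ref{thm:main3}) starting from essentially your situation, a red point with a blue polar circle. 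It then sweeps continuous families of forced-red points: the curve $\Gamma$, a Jordan-curve argument, and repeated strip enlargement. So, contrary to your remark, continuity \emph{is} available, not of the coloring but of families of points whose colors are forced. Completing your Step~3 would amount to reproving the corollary.

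The missing idea is to use Corollary~\ref{cor:2sphere-ramsey} as a black box and look at $-\boldsymbol{p}$, rather than hunting for a red antipodal pair. You already noted that, absent a blue unit triangle, the corollary yields red orthogonal points $\boldsymbol{p},\boldsymbol{q}$.
\begin{itemize}
\item If $-\boldsymbol{p}$ is red, then $\{\boldsymbol{p},-\boldsymbol{p},\boldsymbol{q}\}$ is a red $I_{\sqrt2}$.
\item If $-\boldsymbol{p}$ is blue, the circle $\boldsymbol{p}^\perp$ is not entirely red, since an antipodal pair on it together with $\boldsymbol{p}$ would be a red $I_{\sqrt2}$. Pick a blue $\boldsymbol{y}\in\boldsymbol{p}^\perp$. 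The two points of $\boldsymbol{p}^\perp$ orthogonal to $\boldsymbol{y}$ are antipodal, so by the same reasoning they are not both red. Hence $\boldsymbol{p}^\perp$ contains a blue orthogonal pair, which together with $-\boldsymbol{p}$ forms a blue unit equilateral triangle.
\end{itemize}
This is the paper's two-case proof (bichromatic versus monochromatic antipodal pairs). A bichromatic antipodal pair is what gives cheap leverage. A red antipodal pair only gives a blue great circle, which is the starting point of the hard argument.
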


More generally, on $\mathbb{S}^3(1/\sqrt{2})$ every red--blue coloring contains a monochromatic isosceles triangle with prescribed side lengths.

\begin{theorem}\label{thm:main2}
For every $0<a<\sqrt{2}$,
\[
\mathbb{S}^{3}(1/\sqrt{2})\rightarrow (I_a;I_a).
\]
\end{theorem}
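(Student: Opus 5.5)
We use the geometry of the sphere $\mathbb{S}^3(1/\sqrt{2})$. For points $\boldsymbol{x},\boldsymbol{y}$ on this sphere, $\|\boldsymbol{x}-\boldsymbol{y}\|^2=1-2\,\boldsymbol{x}\cdot\boldsymbol{y}$. Hence distance $1$ means exactly orthogonality. A copy of $I_a$ is therefore an apex $\boldsymbol{x}$ together with two points $\boldsymbol{y},\boldsymbol{z}$ in the great $2$-sphere $E_{\boldsymbol{x}}:=\boldsymbol{x}^{\perp}\cap\mathbb{S}^3(1/\sqrt{2})$ with $\|\boldsymbol{y}-\boldsymbol{z}\|=a$. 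Dually, the set of admissible apices for a fixed pair $\boldsymbol{y},\boldsymbol{z}$ is the great circle $\{\boldsymbol{y},\boldsymbol{z}\}^{\perp}\cap\mathbb{S}^3(1/\sqrt{2})$. Since $a<\sqrt{2}$, chords of length $a$ exist on every great circle and every great $2$-sphere.

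Suppose a red--blue coloring has no monochromatic $I_a$. The plan is to force more and more rigid structure until we exhibit a monochromatic copy.

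\emph{Step 1.} Every $2$-coloring of a $2$-sphere of radius $1/\sqrt{2}$ contains a monochromatic pair at distance $a$, for any $0<a<\sqrt2$. The reason is that the distance-$a$ graph on the sphere contains odd cycles. Choose a small circle whose radius $s$ satisfies $a=2s\sin(\pi k/(2k+1))$ for some $k$; this is possible for every $a$ in the range by continuity in $s$. The regular star $(2k+1)$-gon inscribed in that circle is then an odd cycle with all edges of length $a$. Apply this to any $E_{\boldsymbol{x}}$ to get, say, red $\boldsymbol{y},\boldsymbol{z}$ with $\|\boldsymbol{y}-\boldsymbol{z}\|=a$.

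Let $P=\mathrm{span}(\boldsymbol{y},\boldsymbol{z})$, let $C=P\cap\mathbb{S}^3(1/\sqrt2)$, and let $C'=P^{\perp}\cap\mathbb{S}^3(1/\sqrt2)$. Every point of $C'$ is an apex for $\boldsymbol{y},\boldsymbol{z}$, so $C'$ is entirely blue. Since $C'$ contains blue pairs at distance $a$, and their apex circle is exactly $C$, the circle $C$ is entirely red. So we reach a rigid configuration: two orthogonal great circles, one entirely red and one entirely blue.

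\emph{Step 2.} Fix $\boldsymbol{c}\in C$, which is red, and let $\boldsymbol{c}_\perp\in C$ be orthogonal to $\boldsymbol{c}$. Then $E_{\boldsymbol{c}}$ is the $2$-sphere spanned by $\boldsymbol{c}_\perp$ and $P^\perp$. It has blue equator $C'$ and red poles $\pm\boldsymbol{c}_\perp$. If any point at distance $a$ from $\boldsymbol{c}_\perp$ inside $E_{\boldsymbol{c}}$ were red, it would form with $\boldsymbol{c}_\perp$ a red $a$-pair with red apex $\boldsymbol{c}$. Hence the whole latitude circle $L_{\boldsymbol{c}}\subset E_{\boldsymbol{c}}$ at distance $a$ from $\boldsymbol{c}_\perp$ is blue, and likewise the latitude circle around $-\boldsymbol{c}_\perp$. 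Letting $\boldsymbol{c}$ range over $C$ produces a $1$-parameter family of blue circles.

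Explicitly, writing points of $P^\perp$ as $\boldsymbol{e}$, the blue points have the form $\cos\varphi\,\boldsymbol{c}_\perp+\sin\varphi\,\boldsymbol{e}$ with $\boldsymbol{c}_\perp\in C$ arbitrary, $\boldsymbol{e}\in C'$ arbitrary, and $\varphi$ fixed by $a$ (namely $\cos\varphi=1-a^2$).

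\emph{Step 3 (main obstacle).} We must find a blue $I_a$ inside the blue set $B=C'\cup\{\cos\varphi\,\boldsymbol{u}+\sin\varphi\,\boldsymbol{e}:\boldsymbol{u}\in C,\ \boldsymbol{e}\in C'\}$. This set is a $2$-dimensional torus together with the circle $C'$.

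The first attempt is to take the apex $\boldsymbol{w}\in C'$. Then $B\cap\boldsymbol{w}^\perp$ contains all points $\cos\varphi\,\boldsymbol{u}\pm\sin\varphi\,\boldsymbol{w}_\perp$ with $\boldsymbol{u}\in C$ free. Two such points with the same sign and parameters $\boldsymbol{u},\boldsymbol{u}'$ have distance $\cos\varphi\,\|\boldsymbol{u}-\boldsymbol{u}'\|$, which ranges continuously over $[0,\sqrt2\,|\cos\varphi|]$. Using pairs of opposite signs or the point $\boldsymbol{w}_\perp\in C'$, further distances become available. By the intermediate value theorem one of these families should realize distance exactly $a$ for every $0<a<\sqrt2$.

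If a single family does not cover all $a$, I would split into cases according to $\cos\varphi=1-a^2$ (small versus large $a$). For the remaining range I would instead use an apex on the torus part of $B$, whose orthogonal $2$-sphere meets $B$ in a curve of sufficiently large diameter. Checking that the union of these families attains the value $a$ in every range is the computation I expect to be the delicate part of the argument.
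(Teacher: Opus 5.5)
Your Steps 1 and 2 are sound. The odd-cycle argument in Step 1 is correct and even removes the need for the Cherkashin--Voronov input in the two-color case. Step 3, however, cannot be completed: for $a$ near $1$ the forced blue set $B$ contains no copy of $I_a$ at all, so no choice of apex or family will work.

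\textbf{The case $a=1$.} Then $\cos\varphi=1-a^2=0$, so your torus collapses onto $C'$ and $B=C'$ is a single great circle. A great circle contains no three pairwise orthogonal points, i.e.\ no unit equilateral triangle. Moreover, at $a=1$ the local forcing is already saturated. The only monochromatic orthogonal pairs you know are red pairs in $C$, whose common orthogonal circle is $C'$, and blue pairs in $C'$, whose common orthogonal circle is $C$. Iterating the propagation therefore yields nothing new.

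\textbf{General $a$ near $1$.} Every point of $B$ has component in $P$ of norm at most $|1-a^2|/\sqrt2$. Suppose $\{\boldsymbol{w},\boldsymbol{y},\boldsymbol{z}\}\subseteq B$ were a copy of $I_a$. Their projections onto the $2$-plane $P^{\perp}$ would be three vectors with Gram matrix $\tfrac12 I+O(|1-a^2|)$. That matrix is nonsingular when $a$ is close to $1$, which is impossible for three vectors in a $2$-dimensional space.

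\textbf{What your families actually cover.} With apex in $C'$:
\begin{itemize}
\item the same-sign family realizes $a$ only when $a\le\sqrt2\,|1-a^2|$, i.e.\ $a\le 1/\sqrt2$;
\item the opposite-sign family realizes $a$ only when $a\ge\sqrt2\sin\varphi=\sqrt2\,a\sqrt{2-a^2}$, i.e.\ $a\ge\sqrt{3/2}$.
\end{itemize}
Apices on the torus enlarge this only slightly, to roughly $a\le 0.73$ and $a\ge 1.21$. So your argument is a complete, elementary proof for $a\in(0,1/\sqrt2]\cup[\sqrt{3/2},\sqrt2)$. The middle range, including the equilateral case $a=1$, needs a genuinely new ingredient, not just a more careful computation.

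\textbf{The paper's route.} The missing ingredient is the asymmetric $2$-sphere statement $\mathbb{S}^2(1/\sqrt2)\rightarrow(T_a;I_a)$ (Theorem~\ref{thm:2sphere-isosceles}). The paper normalizes at an antipodal pair of $\mathbb{S}^3(1/\sqrt2)$ rather than at a monochromatic $a$-pair, using that every point of the equatorial sphere $\mathbb{S}_{\mathrm{eq}}$ is at distance $1$ from both poles $(0,0,0,\pm 1/\sqrt2)$.
\begin{itemize}
\item If some antipodal pair is bichromatic, place it at the poles. Any monochromatic $a$-pair on $\mathbb{S}_{\mathrm{eq}}$ (your Step 1 suffices) closes up with the pole of its own color.
\item If all antipodal pairs are monochromatic, place a red antipodal pair at the poles. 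Then $\mathbb{S}_{\mathrm{eq}}$ must contain neither a red $a$-pair nor a blue $I_a$, which is exactly what Theorem~\ref{thm:2sphere-isosceles} forbids.
\end{itemize}
The substance lies in that theorem:
\begin{itemize}
\item arc-growing propagation for $a\le\sqrt{3/2}$, $a\neq 1$;
\item Theorem~\ref{thm:main3}, with its Jordan-curve and circle-enlargement argument, for $a=1$;
\item a separation argument for $a>\sqrt{3/2}$.
\end{itemize}
Nothing in your Steps 1--3 substitutes for it.
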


We will also use, as an auxiliary input, a recent theorem of Cherkashin and Voronov~\cite{2024DCG-cherkashin-chromatic} on the chromatic number of $2$-spheres, which answers an odd problem in~\cite{1976JAUMS-simmons-chromatic}.

\begin{lemma}[Cherkashin--Voronov~\cite{2024DCG-cherkashin-chromatic}]\label{lem:monoline}
Let $T_a$ denote a pair of points at distance $a$, where $a<2r$.
Then
\(
\mathbb{S}^2(r)\rightarrow (T_a;T_a).
\)
\end{lemma}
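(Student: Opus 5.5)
The plan is a parity argument. Suppose, for contradiction, that $\mathbb{S}^2(r)$ has a red--blue coloring with no monochromatic pair at distance $a$, where $0<a<2r$. Then any two points at distance exactly $a$ receive \emph{different} colors. The aim is to show that points reached by two $a$-steps must share a color. Combined with the connectedness of the sphere, this will force the coloring to be constant, which contradicts the existence of a pair at distance $a$ (such a pair exists since $a<2r$).

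\textbf{Step 1: two-step distances.} Fix $\boldsymbol{x}\in\mathbb{S}^2(r)$. The set $C_{\boldsymbol{x}}$ of points at distance $a$ from $\boldsymbol{x}$ is a circle on the sphere. Because $a<2r$, this circle is nondegenerate: it has positive radius, and $\boldsymbol{x}$ is not its center point $-\boldsymbol{x}$. Take $\boldsymbol{z}\in C_{\boldsymbol{x}}$ and $\boldsymbol{y}\in C_{\boldsymbol{z}}$. Since $\boldsymbol{x}\in C_{\boldsymbol{z}}$, letting $\boldsymbol{y}$ run over $C_{\boldsymbol{z}}$ makes $\|\boldsymbol{x}-\boldsymbol{y}\|$ vary continuously from $0$ up to the diameter $d_{\max}>0$ of the circle $C_{\boldsymbol{z}}$. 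Consequently, for every $d\in[0,d_{\max}]$ and every $\boldsymbol{y}$ with $\|\boldsymbol{x}-\boldsymbol{y}\|=d$, there is a point $\boldsymbol{z}$ at distance $a$ from both $\boldsymbol{x}$ and $\boldsymbol{y}$. This follows from the rotational symmetry of the sphere about the axis through $\boldsymbol{x}$. Since $\boldsymbol{z}$ must differ in color from both $\boldsymbol{x}$ and $\boldsymbol{y}$, we get that $\boldsymbol{x}$ and $\boldsymbol{y}$ have the \emph{same} color whenever $\|\boldsymbol{x}-\boldsymbol{y}\|\le d_{\max}$.

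\textbf{Step 2: chaining.} Any two points of $\mathbb{S}^2(r)$ can be joined along a great-circle arc. This arc can be cut into finitely many consecutive pieces whose chords all have length at most $d_{\max}$. By Step 1, consecutive points along this chain share a color, so the whole sphere is monochromatic. However, any pair at distance $a$ must be bichromatic, which is a contradiction. Hence every $2$-coloring contains a monochromatic pair at distance $a$, i.e.\ $\mathbb{S}^2(r)\rightarrow (T_a;T_a)$.

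\textbf{Main obstacle.} The only delicate point is verifying $d_{\max}>0$ and that all intermediate distances are realized, i.e.\ that the two-step set is a genuine interval $[0,d_{\max}]$ rather than a single value. This is exactly where the hypothesis $a<2r$ enters. At $a=2r$ the circle $C_{\boldsymbol{z}}$ degenerates to the single antipode $\{-\boldsymbol{z}\}=\{\boldsymbol{x}\}$, and the antipodal coloring shows that the statement indeed fails there. The remaining work is a routine computation of the radius of $C_{\boldsymbol{z}}$, namely $a\sqrt{1-a^2/(4r^2)}$, which is positive precisely when $a<2r$, together with the intermediate value theorem.
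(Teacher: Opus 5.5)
Your proof is correct, and it takes a genuinely different route from the paper. The paper gives no argument at all: it simply imports the statement from Cherkashin--Voronov. Their theorem is far stronger than what is stated here. It settles Simmons' question for \emph{three} colors: every $3$-coloring of $\mathbb{S}^2(r)$ with $r>1/2$ has a monochromatic unit pair, which after scaling is the range $a<2r$. That proof is correspondingly deep.

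You observe that the two-color version is elementary, and this is all the paper ever uses (on the equatorial sphere, in the proofs of Theorem~\ref{thm:main1}(2) and Theorem~\ref{thm:main2}). With two colors, a common distance-$a$ neighbor $\boldsymbol{z}$ forces $\boldsymbol{x}$ and $\boldsymbol{y}$ to agree. So the coloring is constant at scale $d_{\max}=2a\sqrt{1-a^2/(4r^2)}>0$, and hence globally constant by connectedness.

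Your rotation about the axis through $\boldsymbol{x}$ correctly upgrades ``some $\boldsymbol{y}$ at distance $d$'' to ``every $\boldsymbol{y}$ at distance $d$''. This is precisely where the two-dimensionality of the sphere enters. On a circle the two-step set from $\boldsymbol{x}$ is finite, the argument breaks down, and the analogous statement can indeed fail.

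The citation buys three-color strength, which your parity step cannot reach: there, $\boldsymbol{z}$ differing in color from both $\boldsymbol{x}$ and $\boldsymbol{y}$ no longer forces them to agree. Your route buys a short, self-contained proof. It makes Theorems~\ref{thm:main1}(2) and~\ref{thm:main2} independent of that deep external result.
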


\section{Proof of Theorem~\ref{thm:main3}}
We argue by contradiction. Suppose that there exists a red-blue coloring of
\(
\mathbb{S}:=\mathbb{S}^{2}(r)\subseteq \mathbb{R}^{3}
\)
such that
\begin{itemize}
    \item[\textnormal{(H1)}] there is no red pair at distance $a$;
    \item[\textnormal{(H2)}] there is no blue equilateral triangle of side length $a$.
\end{itemize}

Write
\(
t:=\frac{a}{r}.
\)
The basic geometric configuration is an \emph{equilateral diamond}: two equilateral triangles of side length $a$ sharing a common base edge. This parallels Simmons' diamond configuration for the sphere's chromatic number~\cite{1976JAUMS-simmons-chromatic}. Since $t\le \sqrt{3}$, an equilateral triangle of side length $a$ can be embedded in $\mathbb{S}$. Therefore, such a diamond can be folded about the common base edge so that all four vertices lie on $\mathbb{S}$. 
Let $\boldsymbol{v}$ and $\boldsymbol{v}'$ be the two tip vertices; after the folding, their distance is denoted by
\[
    b := \|\boldsymbol{v}-\boldsymbol{v}'\|.
\]
For a point $\boldsymbol{x}\in \mathbb{S}$ and $0\le d\le2r$, we write
\begin{equation}\label{eq:def-P_d-main3}
  \mathcal{P}_d(\boldsymbol{x}) := \{\boldsymbol{y}\in \mathbb{S}: \|\boldsymbol{x}-\boldsymbol{y}\| = d\}  
\end{equation}
for the circle of points on $\mathbb{S}$ at distance $d$ from $\boldsymbol{x}$. 
In this section we are mainly interested in the three values $d=a$, $d=b$, and $d=1$; when $d=1$ we write simply $\mathcal{P}(\boldsymbol{x}) := \mathcal{P}_1(\boldsymbol{x})$.
Let $r_b$ denote the radius of $\mathcal{P}_b(\boldsymbol{x})$. 
A direct computation gives
\begin{equation}\label{eq:def-b}
b=2a\sqrt{\frac{3-t^{2}}{4-t^{2}}}, \qquad r_b = \sqrt{b^{2} - \frac{b^{4}}{4r^2}}.
\end{equation}
Moreover, since $t\in(0,\sqrt{3}]$, we obtain $r_b\neq0$ if and only if $t\notin\{\sqrt{2},\sqrt{3}\}$. 

The following propagation lemma is the key point in the non-degenerate case.

\begin{lemma}\label{lem:propagation-main3}
Assume $t<\sqrt{3}$ and $t\neq\sqrt{2}$, let $\boldsymbol{x}\in \mathbb{S}$ be red. Then
\begin{enumerate}
    \item[\textnormal{(i)}] every point of $\mathcal{P}_{a}(\boldsymbol{x})$ is blue;
    \item[\textnormal{(ii)}] every point of $\mathcal{P}_{b}(\boldsymbol{x})$ is red.
\end{enumerate}
\end{lemma}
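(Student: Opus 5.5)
Part (i) is immediate from (H1): if some $\boldsymbol{y}\in\mathcal{P}_a(\boldsymbol{x})$ were red, then $\{\boldsymbol{x},\boldsymbol{y}\}$ would be a red pair at distance $a$. So every point of $\mathcal{P}_a(\boldsymbol{x})$ is blue, and no hypothesis on $t$ is needed for this part.

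For part (ii), fix $\boldsymbol{v}'\in\mathcal{P}_b(\boldsymbol{x})$ and suppose, for contradiction, that $\boldsymbol{v}'$ is blue. The plan is to realize $\boldsymbol{x}$ and $\boldsymbol{v}'$ as the two tips of a folded equilateral diamond inscribed in $\mathbb{S}$. The common base edge $\{\boldsymbol{p},\boldsymbol{q}\}$ consists of the two points of $\mathbb{S}$ at distance $a$ from both $\boldsymbol{x}$ and $\boldsymbol{v}'$, i.e.\ $\mathcal{P}_a(\boldsymbol{x})\cap\mathcal{P}_a(\boldsymbol{v}')$. By symmetry both points lie on the great circle through the perpendicular bisector plane of $\boldsymbol{x}\boldsymbol{v}'$. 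We must check that they exist and that $\|\boldsymbol{p}-\boldsymbol{q}\|=a$. This is exactly how $b$ was defined in \eqref{eq:def-b}: folding the diamond with base length $a$ onto $\mathbb{S}$ gives tip distance $b$. Conversely, given tips at distance $b$, the configuration is unique up to rotation about the axis through the midpoint of $\boldsymbol{x}\boldsymbol{v}'$. Hence the two common $a$-neighbours are at distance $a$ from each other.

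We also need $\boldsymbol{x}\ne\boldsymbol{v}'$, i.e.\ $b>0$, and the circle $\mathcal{P}_b(\boldsymbol{x})$ nondegenerate, i.e.\ $r_b\neq0$. Both hold because $t<\sqrt3$ and $t\ne\sqrt2$. At $t=\sqrt2$ we get $b=2r$, so the tip is antipodal and $\mathcal{P}_b$ is a single point; at $t=\sqrt3$ we get $b=0$.

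Given the diamond, part (i) shows that $\boldsymbol{p}$ and $\boldsymbol{q}$ are blue, since they lie in $\mathcal{P}_a(\boldsymbol{x})$. Then $\boldsymbol{p},\boldsymbol{q},\boldsymbol{v}'$ form a blue equilateral triangle of side $a$, which contradicts (H2). Therefore $\boldsymbol{v}'$ is red.

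The main obstacle is the geometric verification that for each $\boldsymbol{v}'\in\mathcal{P}_b(\boldsymbol{x})$ the two common $a$-neighbours on $\mathbb{S}$ are exactly at mutual distance $a$. I would first do this in coordinates. Put $\boldsymbol{x},\boldsymbol{v}'$ symmetric about the $z$-axis in the $xz$-plane; the common neighbours are then $(0,\pm s,h)$ with $s^2+h^2=r^2$. Solving $\|\boldsymbol{p}-\boldsymbol{x}\|=a$ and comparing with $2s=a$ should reproduce the formula for $b$ in \eqref{eq:def-b}. Alternatively, one can argue directly from rotational uniqueness of the folded diamond.
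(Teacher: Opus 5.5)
Your proof is correct and follows the paper's argument: part (i) comes from (H1), and part (ii) places $\boldsymbol{x}$ and the given point of $\mathcal{P}_b(\boldsymbol{x})$ as the two tips of a folded equilateral diamond, whose base vertices are blue by (i) and so would complete a blue equilateral triangle. The paper just asserts the diamond's existence ``by the definition of $b$''. Your coordinate check fills in that step; it does reproduce $b^2=4a^2(3-t^2)/(4-t^2)$, and it needs $t\neq\sqrt{2}$ exactly to divide by the nonzero height of the tips. A simpler justification is that rotations of $\mathbb{S}$ act transitively on pairs at distance $b$, which is cleaner than your ``unique up to rotation about the midpoint axis'' remark.
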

\begin{proof}
Part \textnormal{(i)} is immediate from \textnormal{(H1)}. For part \textnormal{(ii)}, let $\boldsymbol{y}\in \mathcal{P}_{b}(\boldsymbol{x})$. By the definition of $b$, there exist points
$\boldsymbol{u},\boldsymbol{v}\in \mathcal{P}_{a}(\boldsymbol{x})$
such that
\(
\triangle \boldsymbol{x}\boldsymbol{u}\boldsymbol{v}\) and
\(
\triangle \boldsymbol{y}\boldsymbol{u}\boldsymbol{v}
\)
are both equilateral of side length $a$. By part \textnormal{(i)}, the points $\boldsymbol{u}$ and $\boldsymbol{v}$ are blue. If $\boldsymbol{y}$ were also blue, then $\{\boldsymbol{y},\boldsymbol{u},\boldsymbol{v}\}$ would be a blue equilateral triangle of side length $a$, contradicting \textnormal{(H2)}. Hence $\boldsymbol{y}$ is red.
\end{proof}
We now split the proof into three cases.

\subsection{The case \(a/r=\sqrt{2}\)}
In this case we scale so that
\(
r=\frac{1}{\sqrt{2}},\)
and \(
a=1.
\)
Then for $\boldsymbol{x},\boldsymbol{y}\in \mathbb{S}$,
\begin{equation}\label{eq:unit-orth-main3}
\|\boldsymbol{x}-\boldsymbol{y}\|=1
\iff
\langle \boldsymbol{x},\boldsymbol{y}\rangle =0.
\end{equation}
Then
\begin{equation}\label{eq:def-P-main3}
\mathcal{P}(\boldsymbol{x})
=\{\boldsymbol{y}\in \mathbb{S}:\|\boldsymbol{x}-\boldsymbol{y}\|=1\}
=\{\boldsymbol{y}\in \mathbb{S}:\langle \boldsymbol{x},\boldsymbol{y}\rangle =0\}
\end{equation}
is the great circle $\mathbb{S}\cap \boldsymbol{x}^{\perp}$, where $\boldsymbol{x}^{\perp}$ denotes the plane through the origin orthogonal to $\boldsymbol{x}$. 
We first establish the following two claims, which will be used repeatedly later.

\begin{claim}\label{claim:red-iff-bluecircle-main3}
A point $\boldsymbol{x}\in \mathbb{S}$ is red if and only if every point of $\mathcal{P}(\boldsymbol{x})$ is blue.
\end{claim}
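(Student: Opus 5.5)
The forward direction is immediate from (H1). If $\boldsymbol{x}$ is red, then every $\boldsymbol{y}\in\mathcal{P}(\boldsymbol{x})$ satisfies $\|\boldsymbol{x}-\boldsymbol{y}\|=1=a$. Such a $\boldsymbol{y}$ cannot be red, since otherwise $\{\boldsymbol{x},\boldsymbol{y}\}$ would be a red pair at distance $a$. So all of $\mathcal{P}(\boldsymbol{x})$ is blue; this is just Lemma~\ref{lem:propagation-main3}(i), whose proof of (i) does not use the restriction $t\neq\sqrt{2}$.

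For the converse, I will show that if $\boldsymbol{x}$ is blue, then $\mathcal{P}(\boldsymbol{x})$ contains a red point. Suppose instead that $\boldsymbol{x}$ is blue and the whole great circle $\mathcal{P}(\boldsymbol{x})=\mathbb{S}\cap\boldsymbol{x}^{\perp}$ is blue. By \eqref{eq:unit-orth-main3}, a unit equilateral triangle on $\mathbb{S}$ is exactly a triple of pairwise orthogonal vectors of norm $1/\sqrt{2}$, that is, a scaled orthonormal frame. Take any two orthogonal points $\boldsymbol{u},\boldsymbol{v}\in\mathcal{P}(\boldsymbol{x})$; these exist because the great circle contains the scaled orthonormal basis of the plane $\boldsymbol{x}^{\perp}$. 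Then $\langle\boldsymbol{x},\boldsymbol{u}\rangle=\langle\boldsymbol{x},\boldsymbol{v}\rangle=\langle\boldsymbol{u},\boldsymbol{v}\rangle=0$, so $\{\boldsymbol{x},\boldsymbol{u},\boldsymbol{v}\}$ is a unit equilateral triangle. All three points are blue, which contradicts (H2). Hence $\mathcal{P}(\boldsymbol{x})$ is not entirely blue, and the equivalence follows.

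There is no real obstacle here. The only point needing care is the geometric identification of unit equilateral triangles on $\mathbb{S}^2(1/\sqrt{2})$ with orthogonal frames via \eqref{eq:unit-orth-main3}, together with the fact that the great circle $\mathcal{P}(\boldsymbol{x})$ contains an orthogonal pair. Both facts are elementary.
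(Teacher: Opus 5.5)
Your proof is correct and follows essentially the same route as the paper. The forward direction comes from \textnormal{(H1)}. For the converse, a blue $\boldsymbol{x}$ together with two orthogonal points of an all-blue $\mathcal{P}(\boldsymbol{x})$ would form a blue unit equilateral triangle, contradicting \textnormal{(H2)}; the paper says ``two points at mutual distance $1$'' where you say ``orthogonal,'' which is the same condition by \eqref{eq:unit-orth-main3}.
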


\begin{poc}
If $\boldsymbol{x}$ is red, then \textnormal{(H1)} implies that every point at distance $1$ from $\boldsymbol{x}$ is blue.

Conversely, assume that $\mathcal{P}(\boldsymbol{x})$ is entirely blue. 
Since $\mathcal{P}(\boldsymbol{x})$ is a great circle of Euclidean radius $1/\sqrt{2}$, 
it contains two points at mutual distance $1$; 
if $\boldsymbol{x}$ were also blue, then together with $\boldsymbol{x}$ these three points 
would form a blue equilateral triangle of side length $1$, contradicting \textnormal{(H2)}.
\end{poc}

\begin{defn}
A great circle $\mathcal{P}(\boldsymbol{k})$ is called \emph{transitive} if for every
$\boldsymbol{u}\in \mathcal{P}(\boldsymbol{k})$, the two points of $\mathcal{P}(\boldsymbol{k})$ at distance $1$ from $\boldsymbol{u}$ have the opposite color from $\boldsymbol{u}$.
\end{defn}

\begin{claim}\label{claim:blue-iff-transitive-main3}
A point $\boldsymbol{x}\in \mathbb{S}$ is blue if and only if the great circle $\mathcal{P}(\boldsymbol{x})$ is transitive.
\end{claim}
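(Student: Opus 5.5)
We prove the two implications separately, working throughout under (H1) and (H2) with $r=1/\sqrt2$, $a=1$. Recall that for $\boldsymbol{u}\in\mathcal{P}(\boldsymbol{x})$ the two points of $\mathcal{P}(\boldsymbol{x})$ at distance $1$ from $\boldsymbol{u}$ are $\pm\boldsymbol{x}\times\boldsymbol{u}/\|\boldsymbol{x}\|$ (suitably scaled). These are exactly the points of $\mathcal{P}(\boldsymbol{x})\cap\mathcal{P}(\boldsymbol{u})$.

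\emph{($\Rightarrow$)} Assume $\boldsymbol{x}$ is blue and fix $\boldsymbol{u}\in\mathcal{P}(\boldsymbol{x})$. Let $\boldsymbol{w}$ be one of the two points of $\mathcal{P}(\boldsymbol{x})$ orthogonal to $\boldsymbol{u}$; then $\{\boldsymbol{x},\boldsymbol{u},\boldsymbol{w}\}$ is a unit equilateral triangle.

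If $\boldsymbol{u}$ is red, then $\boldsymbol{w}$ is blue by (H1).

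If $\boldsymbol{u}$ is blue, then $\boldsymbol{w}$ cannot also be blue, since together with the blue point $\boldsymbol{x}$ this would give a blue unit equilateral triangle, contradicting (H2). So $\boldsymbol{w}$ is red.

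In either case $\boldsymbol{w}$ has the opposite color from $\boldsymbol{u}$. Since $\boldsymbol{u}$ and $\boldsymbol{w}$ were arbitrary, $\mathcal{P}(\boldsymbol{x})$ is transitive. This direction is immediate.

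\emph{($\Leftarrow$)} Assume $\mathcal{P}(\boldsymbol{x})$ is transitive and, for contradiction, that $\boldsymbol{x}$ is red. By Claim~\ref{claim:red-iff-bluecircle-main3}, every point of $\mathcal{P}(\boldsymbol{x})$ is blue. Any $\boldsymbol{u}\in\mathcal{P}(\boldsymbol{x})$ then has blue neighbours at distance $1$ on the circle. This contradicts transitivity, which requires those neighbours to be red. Hence $\boldsymbol{x}$ is blue.

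Note that $\mathcal{P}(\boldsymbol{x})$ is nonempty, so this argument is not vacuous. No step here is a real obstacle; the only point needing care is the geometric fact that the two circle points at distance $1$ from $\boldsymbol{u}$ each form a unit equilateral triangle with $\boldsymbol{x}$ and $\boldsymbol{u}$. This follows from \eqref{eq:unit-orth-main3}, since $\boldsymbol{x},\boldsymbol{u},\boldsymbol{w}$ are pairwise orthogonal.
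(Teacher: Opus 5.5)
Your proposal is correct and matches the paper's proof: for ($\Rightarrow$) you split on the color of $\boldsymbol{u}$, using (H1) when $\boldsymbol{u}$ is red and (H2) together with the blue point $\boldsymbol{x}$ when $\boldsymbol{u}$ is blue. For ($\Leftarrow$) you argue by contradiction via Claim~\ref{claim:red-iff-bluecircle-main3}, since a red $\boldsymbol{x}$ would make the whole circle blue, and that is incompatible with transitivity.
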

\begin{poc}
Assume first that $\boldsymbol{x}$ is blue. Let $\boldsymbol{u}\in \mathcal{P}(\boldsymbol{x})$, and let
$\boldsymbol{u}_{+},\boldsymbol{u}_{-}\in \mathcal{P}(\boldsymbol{x})$
be the two points at distance $1$ from $\boldsymbol{u}$.
If $\boldsymbol{u}$ is blue, then $\boldsymbol{u}_{\pm}$ cannot be blue by \textnormal{(H2)}.
If $\boldsymbol{u}$ is red, then $\boldsymbol{u}_{\pm}$ cannot be red by \textnormal{(H1)}.
Hence $\boldsymbol{u}_{\pm}$ always have the opposite color from $\boldsymbol{u}$, so $\mathcal{P}(\boldsymbol{x})$ is transitive.

Conversely, assume that $\mathcal{P}(\boldsymbol{x})$ is transitive. If $\boldsymbol{x}$ were red, then by Claim~\ref{claim:red-iff-bluecircle-main3} the whole circle $\mathcal{P}(\boldsymbol{x})$ would be blue, contradicting transitivity. Hence $\boldsymbol{x}$ is blue.
\end{poc}
We next choose a convenient coordinate configuration. By rotation we may assume that
\(
\boldsymbol{b}:=\left(0,0,\frac{1}{\sqrt{2}}\right)
\)
is red. Then
\[
\mathcal{C}:=\mathcal{P}(\boldsymbol{b})=\{(x,y,z)\in \mathbb{S}:z=0\}
\]
is entirely blue by Claim~\ref{claim:red-iff-bluecircle-main3}. Therefore, for every
$\boldsymbol{k}\in \mathcal{C}$, the great circle $\mathcal{P}(\boldsymbol{k})$ is transitive by Claim~\ref{claim:blue-iff-transitive-main3}. Set
\[
\boldsymbol{c}:=\left(0,-\frac{1}{\sqrt{2}},0\right)\in \mathcal{C},
\qquad
\mathcal{D}:=\mathcal{P}(\boldsymbol{c})=\{(x,y,z)\in \mathbb{S}:y=0\}.
\]
Then $\mathcal{D}$ is transitive. Consider the six points
\begin{align*}
\boldsymbol{a}_{1}&=\left(-\frac{1}{\sqrt{2}},0,0\right),
&
\boldsymbol{a}_{2}&=\left(\frac{1}{\sqrt{2}},0,0\right),\\
\boldsymbol{b}_{1}&=\left(\frac12,0,\frac12\right),
&
\boldsymbol{b}_{2}&=\left(-\frac12,0,-\frac12\right),\\
\boldsymbol{c}_{1}&=\left(-\frac12,0,\frac12\right),
&
\boldsymbol{c}_{2}&=\left(\frac12,0,-\frac12\right).
\end{align*}
They lie on $\mathcal{D}$, and
\(
\|\boldsymbol{b}_{1}-\boldsymbol{c}_{1}\|=1.
\)
Thus $\boldsymbol{b}_{1}$ and $\boldsymbol{c}_{1}$ have opposite colors. Without loss of generality we may assume that $\boldsymbol{c}_{1}$ is blue. Then transitivity along $\mathcal{D}$ forces
\(
\boldsymbol{b}_{1},\boldsymbol{b}_{2}\) to be red, and
\(
\boldsymbol{c}_{2}\) to be blue. Set
\(
\mathcal{D}':=\mathcal{P}(\boldsymbol{b}_{1}).
\)
Since $\boldsymbol{b}_{1}$ is red, Claim~\ref{claim:red-iff-bluecircle-main3} implies that $\mathcal{D}'$ is entirely blue. We refer to
\(
\mathcal{C}\cup \mathcal{D}'
\)
as the \emph{cross circles}.
For $\alpha\in [0,2\pi)$ let
\[
\boldsymbol{k}(\alpha)
=\left(\frac{\cos \alpha}{\sqrt{2}},\frac{\sin \alpha}{\sqrt{2}},0\right).
\] 
Clearly \(\|\boldsymbol{k}(\alpha)\| = 1/\sqrt{2}\), so \(\boldsymbol{k}(\alpha)\in\mathbb{S}\). 
Moreover, $\mathcal{C}=\{\boldsymbol{k}(\alpha):\alpha\in[0,2\pi)\}$.
Define
\[
    \boldsymbol{d}(\alpha)
=
\left(
\frac{\sin\alpha}{\sqrt{2(1+\sin^{2}\alpha)}},
\frac{-\cos\alpha}{\sqrt{2(1+\sin^{2}\alpha)}},
\frac{-\sin\alpha}{\sqrt{2(1+\sin^{2}\alpha)}}
\right),\]
\begin{equation}\label{eq:defn of d(alpha)'}
\boldsymbol{d}'(\alpha)
=
\left(
\frac{\sin^{2}\alpha}{\sqrt{2(1+\sin^{2}\alpha)}},
\frac{-\sin\alpha\cos\alpha}{\sqrt{2(1+\sin^{2}\alpha)}},
\frac{1}{\sqrt{2(1+\sin^{2}\alpha)}}
\right).
\end{equation}
One checks that
\[
\boldsymbol{d}(\alpha)\in \mathcal{P}(\boldsymbol{k}(\alpha))\cap \mathcal{D}',
\qquad
\boldsymbol{d}'(\alpha)\in \mathcal{P}(\boldsymbol{k}(\alpha)),
\qquad
\|\boldsymbol{d}(\alpha)-\boldsymbol{d}'(\alpha)\|=1.
\]
Since $\boldsymbol{d}(\alpha)\in \mathcal{D}'$ is blue and $\mathcal{P}(\boldsymbol{k}(\alpha))$ is transitive,
$\boldsymbol{d}'(\alpha)$ is red. Hence
\[
\Gamma:=\{\boldsymbol{d}'(\alpha):\alpha\in [0,2\pi)\}
\]
is a closed red curve on $\mathbb{S}$.

At this point the proof becomes genuinely geometric. We present it by several separate lemmas.

\begin{lemma}\label{lem:sqrt2-cap}
Define  
\(\mathcal{S}
:=\left\{\boldsymbol{x}\in\mathbb{S}:\left\|\boldsymbol{x}-\frac{\boldsymbol{b}+\boldsymbol{b}_{1}}2\right\|
=\frac12\|\boldsymbol{b}-\boldsymbol{b}_{1}\|\right\}\). Let
\(
\boldsymbol{q}:=(1,0,1+\sqrt2),
\) and \(
c:=1+\frac{\sqrt2}{2},
\)
and let \(U^\circ\) be the connected component of \(\mathbb S\setminus \Gamma\) containing the open cap
\(
\{\boldsymbol{x}\in \mathbb S:\boldsymbol{q}\cdot \boldsymbol{x}>c\},
\)
and let \(U:=\overline{U^\circ}\).
Then
\(
\mathcal S\subseteq U.
\)
In particular, \(\mathcal S\) lies in the closed region bounded by \(\Gamma\).
\end{lemma}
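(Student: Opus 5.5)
The plan is to identify $\mathcal S$ explicitly as the boundary circle of the given open cap, and then to show that $\Gamma$ never enters that open cap. Everything reduces to two short computations plus a connectivity argument.

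\textbf{Step 1: $\mathcal S$ is the boundary of the cap.} The set $\mathcal S$ consists of the points $\boldsymbol{x}\in\mathbb S$ lying on the sphere with diameter $\boldsymbol{b}\boldsymbol{b}_1$. Equivalently, $(\boldsymbol{x}-\boldsymbol{b})\cdot(\boldsymbol{x}-\boldsymbol{b}_1)=0$. Expanding, and using $\|\boldsymbol{x}\|^2=\tfrac12$ and $\boldsymbol{b}\cdot\boldsymbol{b}_1=\tfrac{1}{2\sqrt2}$, this becomes
\[
\boldsymbol{x}\cdot(\boldsymbol{b}+\boldsymbol{b}_1)=\tfrac12+\tfrac{1}{2\sqrt2}.
\]
Since $\boldsymbol{b}+\boldsymbol{b}_1=\bigl(\tfrac12,0,\tfrac12+\tfrac1{\sqrt2}\bigr)=\tfrac12\boldsymbol{q}$, the condition is exactly $\boldsymbol{q}\cdot\boldsymbol{x}=1+\tfrac{\sqrt2}{2}=c$. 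Hence $\mathcal S=\{\boldsymbol{x}\in\mathbb S:\boldsymbol{q}\cdot\boldsymbol{x}=c\}$. This is a genuine circle containing $\boldsymbol{b}$ and $\boldsymbol{b}_1$, and it is the boundary of the cap $K:=\{\boldsymbol{x}\in\mathbb S:\boldsymbol{q}\cdot\boldsymbol{x}>c\}$. The cap is nonempty: $\|\boldsymbol{q}\|/\sqrt2>c$, equivalently $c<\max_{\mathbb S}\boldsymbol{q}\cdot\boldsymbol{x}$. Consequently $\overline K=K\cup\mathcal S$.

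\textbf{Step 2: $\Gamma\cap K=\emptyset$.} Put $u=\sin^2\alpha\in[0,1]$ and $w=1+u\in[1,2]$. Directly from \eqref{eq:defn of d(alpha)'},
\[
\boldsymbol{q}\cdot\boldsymbol{d}'(\alpha)=\frac{u+1+\sqrt2}{\sqrt{2(1+u)}}=\frac{1}{\sqrt2}\Bigl(\sqrt w+\frac{\sqrt2}{\sqrt w}\Bigr)=:g(w).
\]
The function $w\mapsto \sqrt w+\sqrt2/\sqrt w$ is convex on $(0,\infty)$, so $g$ is convex on $[1,2]$. At the endpoints, $g(1)=\tfrac{1+\sqrt2}{\sqrt2}=c$ and $g(2)=\tfrac{2+\sqrt2}{2}=c$. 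By convexity, $g(w)\le c$ on $[1,2]$. Therefore $\boldsymbol{q}\cdot\boldsymbol{d}'(\alpha)\le c$ for every $\alpha$, so $\Gamma$ avoids the open cap $K$. It touches $\mathcal S$ only when $\sin\alpha\in\{0,\pm1\}$.

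\textbf{Step 3: conclusion.} The open cap $K$ is connected and disjoint from $\Gamma$, so it lies in a single component of $\mathbb S\setminus\Gamma$, which by definition is $U^\circ$. Hence
\[
\mathcal S\subseteq\overline K\subseteq\overline{U^\circ}=U.
\]
The ``in particular'' clause is just the reading of $U$ as the closed region cut out by the closed curve $\Gamma$ on the side containing $K$.

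The only step needing care is Step 2. The bound is tight at $\sin\alpha\in\{0,\pm1\}$, so no slack is available and a crude estimate would fail; the substitution $w=1+\sin^2\alpha$, which turns the bound into a convexity statement with equality at both endpoints, is the key trick. Step 1 is a routine identity once one notices that $\boldsymbol{b}+\boldsymbol{b}_1$ is parallel to $\boldsymbol{q}$.
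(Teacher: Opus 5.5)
Your proof is correct and follows the same route as the paper: you identify $\mathcal S$ as the level circle $\boldsymbol{q}\cdot\boldsymbol{x}=c$, show $\boldsymbol{q}\cdot\boldsymbol{d}'(\alpha)\le c$, and pass to the closure of the cap. The paper instead gets the inequality from the factorization $g(w)-c=(\sqrt w-1)(\sqrt w-\sqrt2)/\sqrt{2w}$, and additionally proves $\Gamma$ is simple and invokes the Jordan curve theorem, which (as your Step 3 shows) is not needed for $\mathcal S\subseteq U$ itself. One small slip: $w\mapsto\sqrt w+\sqrt2/\sqrt w$ is not convex on all of $(0,\infty)$, since its second derivative is $\tfrac14 w^{-5/2}(3\sqrt2-w)$; it is convex on $(0,3\sqrt2)\supset[1,2]$, so your conclusion stands.
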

\begin{proof}
We first rewrite \(\mathcal S\) in a more transparent form. By definition,
\[
\mathcal S
=
\left\{
\boldsymbol{x}\in \mathbb S:
\left\|\boldsymbol{x}-\frac{\boldsymbol{b}+\boldsymbol{b}_{1}}{2}\right\|
=
\frac12\|\boldsymbol{b}-\boldsymbol{b}_{1}\|
\right\},
\]
where
\(
\boldsymbol{b}=\left(0,0,\frac1{\sqrt2}\right)\) and
\(
\boldsymbol{b}_{1}=\left(\frac12,0,\frac12\right).
\)
Since $\|\boldsymbol{x}\|^{2}=1/2$ for every $\boldsymbol{x}\in\mathbb S$, expanding the norm yields
$\boldsymbol{x}\in \mathcal S \iff x+(1+\sqrt2)z=1+\frac{\sqrt2}{2}$,
hence $\mathcal S=\{\boldsymbol{x}\in\mathbb S:\boldsymbol{q}\cdot \boldsymbol{x}=c\}$.
For any $\boldsymbol{d}'(\alpha)\in\Gamma$, using the explicit parametrization~\eqref{eq:defn of d(alpha)'}, we compute
\[
\boldsymbol{q}\cdot \boldsymbol{d}'(\alpha)-c
=
\frac{\sqrt2}{2\sqrt{1+\sin^{2}\alpha}}
\Bigl(\sqrt{1+\sin^{2}\alpha}-1\Bigr)
\Bigl(\sqrt{1+\sin^{2}\alpha}-\sqrt2\Bigr).
\]
Since
\(
1\le \sqrt{1+\sin^{2}\alpha}\le \sqrt2,
\)
it follows that
\(
\boldsymbol{q}\cdot \boldsymbol{d}'(\alpha)\le c
\)
for every \(\alpha\), with equality if and only if \(\sin\alpha=0\) or \(|\sin\alpha|=1\), namely at the two distinguished points \(\boldsymbol{b}\) and \(\boldsymbol{b}_{1}\). Hence \(\Gamma\) is contained in the closed cap
\(
\{\boldsymbol{x}\in\mathbb S:\boldsymbol{q}\cdot \boldsymbol{x}\le c\},
\)
and it meets the boundary latitude \(\mathcal S\) only at \(\boldsymbol{b}\) and \(\boldsymbol{b}_{1}\).

Finally, note that $\boldsymbol{d}'(\alpha+\pi)=\boldsymbol{d}'(\alpha)$ for all $\alpha$, so the parametrization has period $\pi$.  
On the interval $[0,\pi)$ the map $\alpha\mapsto\boldsymbol{d}'(\alpha)$ is injective: the $x$- and $z$-coordinates determine $\sin^2\alpha$, and the sign of the $y$-coordinate distinguishes $\alpha$ from $\pi-\alpha$.  
Hence $\Gamma$ is a simple closed curve on $\mathbb{S}$.  
By the Jordan curve theorem for the sphere, $\Gamma$ separates $\mathbb{S}$ into two open regions, each homeomorphic to an open disk.  
Let $U^\circ$ be the component that contains the open cap $\{\boldsymbol{x}\in\mathbb{S}:\boldsymbol{q}\cdot\boldsymbol{x}>c\}$, and set $U:=\overline{U^\circ}$.  
Since $\Gamma$ lies in the closed cap $\{\boldsymbol{x}:\boldsymbol{q}\cdot\boldsymbol{x}\le c\}$ and meets its boundary $\mathcal{S}$ only at $\boldsymbol{b}$ and $\boldsymbol{b}_1$, the whole circle $\mathcal{S}$ is contained in the closure of $U^\circ$; consequently $\mathcal{S}\subseteq U$.  
\end{proof}

\begin{lemma}\label{lem:sqrt2-strip}
Define
\(
\mathbb{E}
:=
\{(x,y,z)\in \mathbb{S}:1/\sqrt{2}-1\le z-(1-\sqrt{2})x\le 1-1/\sqrt{2}\}.
\)
Then \(\mathbb{E}\) is a closed spherical strip, invariant under rotations about the axis
\[
\mathcal{L}:=\{(x,y,z)\in \mathbb{R}^{3}:y=0,\ z=(1+\sqrt{2})x\},
\]
and
\(
\mathbb{E}=\bigcup_{\boldsymbol{x}\in \mathcal{S}}\mathcal{P}(\boldsymbol{x}).
\)
\end{lemma}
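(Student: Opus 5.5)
We will work with the unit vector $\boldsymbol{n}:=\boldsymbol{q}/\|\boldsymbol{q}\|$, which spans the axis of the cap that defines $\mathcal{S}$. Since $\boldsymbol{q}=(1,0,1+\sqrt2)$, the line $\mathbb{R}\boldsymbol{q}$ is exactly $\mathcal{L}$, because $z=(1+\sqrt2)x$ and $y=0$. The plan is to rewrite both $\mathbb{E}$ and $\mathcal{S}$ as conditions on the single quantity $\boldsymbol{n}\cdot\boldsymbol{x}$ and then compare them.

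First we describe $\mathbb{E}$. Since $(1-\sqrt2)(1+\sqrt2)=-1$, the vector $\boldsymbol{p}:=(-(1-\sqrt2),0,1)=(\sqrt2-1,0,1)$ satisfies $\boldsymbol{p}\cdot\boldsymbol{q}=\sqrt2-1+1+\sqrt2\neq0$. So $\boldsymbol{p}$ is not perpendicular to $\mathcal{L}$, and we need to recompute. The strip condition is $|\boldsymbol{p}\cdot\boldsymbol{x}|\le 1-1/\sqrt2$. We have $\|\boldsymbol{p}\|^2=(\sqrt2-1)^2+1=4-2\sqrt2$ and $\|\boldsymbol{q}\|^2=1+(1+\sqrt2)^2=4+2\sqrt2$. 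The first key step is to confirm that $\boldsymbol{p}$ is parallel to the axis of symmetry; otherwise the invariance claim would fail. The ratio of coordinates is $1/(\sqrt2-1)=1+\sqrt2$, so $\boldsymbol{p}\parallel(1,0,1+\sqrt2)=\boldsymbol{q}$, with $\boldsymbol{q}=(1+\sqrt2)\boldsymbol{p}$. Hence $\mathbb{E}=\{\boldsymbol{x}\in\mathbb{S}:|\boldsymbol{n}\cdot\boldsymbol{x}|\le h\}$ with $h=(1-1/\sqrt2)/\|\boldsymbol{p}\|$. This set is closed, and it is invariant under rotations about $\mathcal{L}$, since such rotations fix $\boldsymbol{n}$ and preserve $\mathbb{S}$.

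Next we describe $\mathcal{S}$. By the proof of Lemma~\ref{lem:sqrt2-cap}, $\mathcal{S}=\{\boldsymbol{x}\in\mathbb{S}:\boldsymbol{n}\cdot\boldsymbol{x}=c/\|\boldsymbol{q}\|\}$. This is a circle of latitude about $\boldsymbol{n}$. Write its angular distance from the pole as $\theta_0$, so that $\cos\theta_0=\sqrt2 c/\|\boldsymbol{q}\|$, using angles measured on the unit-radius rescaling.

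Now we compute the union. By \eqref{eq:def-P-main3}, $\mathcal{P}(\boldsymbol{x})$ is the great circle orthogonal to $\boldsymbol{x}$. Take a point $\boldsymbol{y}\in\mathbb{S}$ and let $\phi\in[0,\pi]$ be its polar angle from $\boldsymbol{n}$. Then $\boldsymbol{y}\in\bigcup_{\boldsymbol{x}\in\mathcal{S}}\mathcal{P}(\boldsymbol{x})$ if and only if some $\boldsymbol{x}$ at polar angle $\theta_0$ is orthogonal to $\boldsymbol{y}$. As the azimuth of $\boldsymbol{x}$ varies relative to $\boldsymbol{y}$, the quantity $2\langle \boldsymbol{x},\boldsymbol{y}\rangle$ takes exactly the values in $[\cos(\theta_0+\phi),\cos(\theta_0-\phi)]$. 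This follows from continuity and connectedness of the circle $\mathcal{S}$, together with the spherical law of cosines. The interval contains $0$ if and only if $|\pi/2-\phi|\le\theta_0$, which is equivalent to $|\boldsymbol{n}\cdot\boldsymbol{y}|\le \sin\theta_0/\sqrt2$. So the union is exactly the band $|\boldsymbol{n}\cdot\boldsymbol{y}|\le \sin\theta_0/\sqrt2$, which is symmetric about the great circle orthogonal to $\boldsymbol{n}$.

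The last step, and the only real obstacle, is to verify that $\sin\theta_0/\sqrt2=h$. This is a surd computation. We have $\cos\theta_0=\sqrt2(1+\tfrac{\sqrt2}{2})/\sqrt{4+2\sqrt2}=(\sqrt2+1)/\sqrt{4+2\sqrt2}$, which gives $\sin^2\theta_0=(4+2\sqrt2-3-2\sqrt2)/(4+2\sqrt2)=1/(4+2\sqrt2)$. On the other side, $h^2=(1-1/\sqrt2)^2/(4-2\sqrt2)=\tfrac{(\sqrt2-1)^2}{2(4-2\sqrt2)}$. We then check that $\tfrac{1}{2(4+2\sqrt2)}$ equals this value. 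Multiplying out, this reduces to $(\sqrt2-1)^2(4+2\sqrt2)=4-2\sqrt2$. Expanding, $(3-2\sqrt2)(4+2\sqrt2)=12+6\sqrt2-8\sqrt2-8=4-2\sqrt2$, so the identity holds. This completes the plan.
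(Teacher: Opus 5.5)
Your proof is correct and follows essentially the paper's route: both $\mathbb{E}$ and $\mathcal{S}$ become conditions on the component along the axis $\boldsymbol{q}/\|\boldsymbol{q}\|$, the union is identified as a symmetric band by deciding when $\mathcal{S}$ contains a point orthogonal to $\boldsymbol{y}$, and the same surd identity finishes the argument. For that middle step you use polar angles and the spherical law of cosines, while the paper applies Cauchy--Schwarz to the components orthogonal to the axis; the only blemish is the false-start remark that $\boldsymbol{p}\cdot\boldsymbol{q}\neq 0$ (``we need to recompute''), which is irrelevant and should be cut, since what matters is $\boldsymbol{q}=(1+\sqrt2)\boldsymbol{p}$.
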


\begin{proof}
Let
\(
\boldsymbol{q}:=(1,0,1+\sqrt2),\) and
\(
\boldsymbol{e}:=\frac{\boldsymbol{q}}{\|\boldsymbol{q}\|}.
\)
Then \(\boldsymbol{e}\) is the unit vector along the axis \(\mathcal L\). By the proof of Lemma~\ref{lem:sqrt2-cap},
\(
\mathcal S=\{\boldsymbol{x}\in\mathbb S:\boldsymbol{q}\cdot \boldsymbol{x}=c\}\) and
\(
c=1+\frac{\sqrt2}{2}.
\)
Meanwhile, we also have
\(
\mathcal S=\{\boldsymbol{x}\in\mathbb S:\boldsymbol{e}\cdot \boldsymbol{x}=h\}\) and
\(
h:=\frac{c}{\|\boldsymbol{q}\|}.
\)

We determine \(\bigcup_{\boldsymbol{x}\in\mathcal S}\mathcal P(\boldsymbol{x})\).
Fix \(\boldsymbol{y}\in\mathbb S\), and write
\(
\boldsymbol{x}=h\boldsymbol{e}+\boldsymbol{u},\)
and
\(
\boldsymbol{y}=s\boldsymbol{e}+\boldsymbol{v},
\)
with \(\boldsymbol{u},\boldsymbol{v}\perp \boldsymbol{e}\). Since \(\|\boldsymbol{x}\|^{2}=\|\boldsymbol{y}\|^{2}=1/2\), we have
\(
\|\boldsymbol{u}\|^{2}=\frac12-h^{2}\) and
\(
\|\boldsymbol{v}\|^{2}=\frac12-s^{2}.
\)
Now \(\boldsymbol{y}\in \mathcal P(\boldsymbol{x})\) is equivalent to \(\boldsymbol{x}\cdot \boldsymbol{y}=0\), that is,
\(
hs+\boldsymbol{u}\cdot \boldsymbol{v}=0.
\)
For fixed \(\boldsymbol{y}\), such a vector \(\boldsymbol{u}\) with \(\|\boldsymbol{u}\|^{2}=1/2-h^{2}\) exists if and only if
\[
|hs|\le \|\boldsymbol{u}\|\,\|\boldsymbol{v}\|
=
\sqrt{\frac12-h^{2}}\sqrt{\frac12-s^{2}}.
\]
Squaring and simplifying gives
\(
s^{2}\le \frac12-h^{2}.
\)
Therefore
\[
\bigcup_{\boldsymbol{x}\in\mathcal S}\mathcal P(\boldsymbol{x})
=
\left\{
\boldsymbol{y}\in\mathbb S:
|\boldsymbol{e}\cdot \boldsymbol{y}|
\le
\sqrt{\frac12-h^{2}}
\right\}.
\]

It remains to rewrite this in coordinates. Since
\(
\|\boldsymbol{q}\|^{2}=4+2\sqrt2,
\)
and \(
h=\frac{1+\sqrt2/2}{\sqrt{4+2\sqrt2}},
\)
we obtain
\[
\|\boldsymbol{q}\|\sqrt{\frac12-h^{2}}=\frac1{\sqrt2}.
\]
Hence the above condition becomes
\(
|\boldsymbol{q}\cdot \boldsymbol{y}|\le \frac1{\sqrt2}.
\)
Because
\[
\boldsymbol{q}\cdot (x,y,z)=x+(1+\sqrt2)z
=
(1+\sqrt2)\bigl(z-(1-\sqrt2)x\bigr),
\]
we obtain
\[
1/\sqrt2-1
\le
z-(1-\sqrt2)x
\le
1-1/\sqrt2.
\]
This is exactly the definition of \(\mathbb E\). Finally, \(\mathbb E\) is a spherical strip bounded by two planes orthogonal to \(\mathcal L\), and is therefore invariant under rotations about \(\mathcal L\).
\end{proof}

\begin{claim}\label{lem:sqrt2-orth}
\(
\mathbb{E}\subseteq \bigcup_{\boldsymbol{u}\in \Gamma}\mathcal{P}(\boldsymbol{u}).
\)
In particular, every point of \(\mathbb{E}\) is blue, and every point of \(\mathcal{S}\) is red.
\end{claim}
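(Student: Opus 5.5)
The plan is to show that for every $\boldsymbol{y}\in\mathbb{E}$ the great circle $\mathcal{P}(\boldsymbol{y})$ meets $\Gamma$. Since orthogonality is symmetric, $\boldsymbol{y}\in\mathcal{P}(\boldsymbol{u})$ if and only if $\boldsymbol{u}\in\mathcal{P}(\boldsymbol{y})$. So any $\boldsymbol{u}\in\Gamma\cap\mathcal{P}(\boldsymbol{y})$ witnesses $\boldsymbol{y}\in\bigcup_{\boldsymbol{u}\in\Gamma}\mathcal{P}(\boldsymbol{u})$.

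Fix $\boldsymbol{y}\in\mathbb{E}$. By Lemma~\ref{lem:sqrt2-strip} there is $\boldsymbol{x}\in\mathcal{S}$ with $\boldsymbol{y}\in\mathcal{P}(\boldsymbol{x})$, and hence $\boldsymbol{x}\in\mathcal{P}(\boldsymbol{y})$. Since $\mathcal{P}(\boldsymbol{y})$ is a great circle, it also contains $-\boldsymbol{x}$. By Lemma~\ref{lem:sqrt2-cap} we have $\boldsymbol{x}\in U=U^\circ\cup\Gamma$. If $\boldsymbol{x}\in\Gamma$ we are done. Otherwise $\boldsymbol{x}\in U^\circ$, and it suffices to show $-\boldsymbol{x}\notin U^\circ$. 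Indeed, one of the two arcs of $\mathcal{P}(\boldsymbol{y})$ joining $\boldsymbol{x}$ to $-\boldsymbol{x}$ is a connected set meeting both $U^\circ$ and its complement, so it must meet the boundary $\Gamma$ of $U^\circ$.

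To see that $-\boldsymbol{x}\notin U^\circ$, I use the explicit parametrization~\eqref{eq:defn of d(alpha)'}. Every point of $\Gamma$ has $x$-coordinate $\ge 0$ and $z$-coordinate $>0$, so $\boldsymbol{q}\cdot\boldsymbol{d}'(\alpha)>0$ for all $\alpha$. Hence the closed hemisphere $H:=\{\boldsymbol{z}\in\mathbb{S}:\boldsymbol{q}\cdot\boldsymbol{z}\le 0\}$ is connected and disjoint from $\Gamma$, so it lies in a single component of $\mathbb{S}\setminus\Gamma$. Since $\boldsymbol{q}\cdot(-\boldsymbol{x})=-c<0$, the point $-\boldsymbol{x}$ lies in $H$. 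It therefore remains to show that $H$ is not in the component $U^\circ$, i.e.\ that $\Gamma$ separates the open cap $\{\boldsymbol{q}\cdot\boldsymbol{z}>c\}$ from $H$.

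This separation is the main obstacle. Since $\Gamma\subseteq\{\boldsymbol{q}\cdot\boldsymbol{z}>0\}$, I would use a winding-number argument. Stereographically project from $-\boldsymbol{e}$, where $\boldsymbol{e}=\boldsymbol{q}/\|\boldsymbol{q}\|$; this sends $H$ to the exterior of a disk and the cap to a small disk around the image of $\boldsymbol{e}$. The separation then reduces to showing that $\Gamma$ winds exactly once (more precisely, an odd number of times) around the axis $\mathcal{L}$. I would test this by tracking the angular coordinate of $\boldsymbol{d}'(\alpha)$ in the plane $\boldsymbol{e}^\perp$ as $\alpha$ runs over $[0,\pi)$. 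At $\alpha=0$ and $\alpha=\pi/2$ the curve sits at $\boldsymbol{b}$ and $\boldsymbol{b}_1$, which are at different angles on $\mathcal{S}$. Between them the sign of the $y$-coordinate, $-\sin\alpha\cos\alpha$, forces the curve to pass on opposite sides of the plane $y=0$ (the plane containing $\mathcal{L}$) on the two halves $(0,\pi/2)$ and $(\pi/2,\pi)$. So the curve goes once around the axis, giving winding number $\pm1$, and $\boldsymbol{e}$ and $-\boldsymbol{e}$ lie in different components. This also settles which component is $U^\circ$: the cap $\{\boldsymbol{q}\cdot\boldsymbol{z}>c\}$ is a connected set disjoint from $\Gamma$ containing $\boldsymbol{e}$, so it lies in the component of $\boldsymbol{e}$, while $H$ lies in the component of $-\boldsymbol{e}$.

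For the consequences: every point of $\Gamma$ is red, so by Claim~\ref{claim:red-iff-bluecircle-main3} each $\mathcal{P}(\boldsymbol{u})$ with $\boldsymbol{u}\in\Gamma$ is entirely blue, and hence $\mathbb{E}$ is blue. For $\boldsymbol{x}\in\mathcal{S}$, Lemma~\ref{lem:sqrt2-strip} gives $\mathcal{P}(\boldsymbol{x})\subseteq\mathbb{E}$, which is entirely blue. The converse direction of Claim~\ref{claim:red-iff-bluecircle-main3} then shows that $\boldsymbol{x}$ is red.
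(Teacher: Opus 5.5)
Your argument is correct, but it takes a genuinely different route from the paper's.

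\textbf{The paper's route.} The paper proves the inclusion by direct computation. For $\boldsymbol{y}=(x,y,z)\in\mathbb{E}$, the equation $\boldsymbol{d}'(\alpha)\cdot\boldsymbol{y}=0$ becomes $x\cos2\alpha+y\sin2\alpha=x+2z$. This is solvable in $\alpha$ if and only if $x^2+y^2\ge(x+2z)^2$. That inequality is then derived from the strip condition $|x+(1+\sqrt2)z|\le 1/\sqrt2$ by squaring to get $2(1+\sqrt2)z(x+z)\le y^2$ and comparing with $4z(x+z)$. No topology enters, Lemma~\ref{lem:sqrt2-cap} is never used, and one gets an explicit witness $\alpha$ for each $\boldsymbol{y}$.

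\textbf{Your route.} You use Lemma~\ref{lem:sqrt2-strip} to place a point $\boldsymbol{x}\in\mathcal{S}$ on the great circle $\mathcal{P}(\boldsymbol{y})$. You then show that $\Gamma$ separates the cap $\{\boldsymbol{q}\cdot\boldsymbol{z}>c\}$, and hence $\boldsymbol{x}$ via Lemma~\ref{lem:sqrt2-cap}, from the closed hemisphere containing $-\boldsymbol{x}$. A half of $\mathcal{P}(\boldsymbol{y})$ joining $\boldsymbol{x}$ to $-\boldsymbol{x}$ must therefore cross $\Gamma$.

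\textbf{What each buys.} Your approach explains the geometry behind the claim and gives Lemma~\ref{lem:sqrt2-cap} the role it seems designed for. It relies only on qualitative features of $\Gamma$: the bounds $0<\boldsymbol{q}\cdot\boldsymbol{u}\le c$ on $\Gamma$, and winding number $\pm1$ about $\mathcal{L}$. It would therefore survive perturbations of $\Gamma$. The price is the component/boundary bookkeeping, using $U=U^\circ\cup\Gamma$ and $\partial U^\circ\subseteq\Gamma$, plus a winding-number computation that you only sketch. The paper's computation is shorter and fully explicit.

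\textbf{Details to fix when writing it out.}
\begin{enumerate}
\item ``At different angles on $\mathcal{S}$'' is not enough. You need that $\boldsymbol{b}$ and $\boldsymbol{b}_1$ are the two points of $\mathcal{S}\cap\{y=0\}$, so they lie in opposite half-planes of $\{y=0\}\setminus\mathcal{L}$ and their angular coordinates about $\mathcal{L}$ differ by exactly $\pi$.
\item Combine this with two further facts: the half-arcs $\alpha\in(0,\pi/2)$ and $\alpha\in(\pi/2,\pi)$ lie strictly in $y<0$ and $y>0$ respectively, and $\Gamma$ avoids $\mathcal{L}$, because $\Gamma\cap\{y=0\}=\{\boldsymbol{b},\boldsymbol{b}_1\}$. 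Together these make the two half-turns add to $\pm2\pi$ rather than cancel.
\item The points where the axis meets the sphere are $\pm\boldsymbol{e}/\sqrt2$, not $\pm\boldsymbol{e}$. So the projection should be from $-\boldsymbol{e}/\sqrt2$, and the cap contains $\boldsymbol{e}/\sqrt2$.
\end{enumerate}
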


\begin{poc}
Fix any \(\boldsymbol{y}=(x,y,z)\in \mathbb{E}\). By Lemma~\ref{lem:sqrt2-strip}, the condition \(\boldsymbol{y}\in\mathbb{E}\) is equivalent to
\(|x+(1+\sqrt2)z|\le 1/\sqrt2\).
To show \(\boldsymbol{y}\in\bigcup_{\boldsymbol{u}\in\Gamma}\mathcal{P}(\boldsymbol{u})\), it suffices to find \(\alpha\) such that \(\boldsymbol{d}'(\alpha)\cdot\boldsymbol{y}=0\).
From the definition~\eqref{eq:defn of d(alpha)'} of \(\boldsymbol{d}'(\alpha)\),
\[
\boldsymbol{d}'(\alpha)\cdot\boldsymbol{y}=
\frac{1}{\sqrt{2(1+\sin^2\alpha)}}\bigl(x\sin^2\alpha - y\sin\alpha\cos\alpha + z\bigr).
\]
Using $\sin^2\alpha=\frac{1-\cos2\alpha}{2}$ and $\sin\alpha\cos\alpha=\frac12\sin2\alpha$, the equation $\boldsymbol{d}'(\alpha)\cdot\boldsymbol{y}=0$ becomes
\[
x\cos2\alpha + y\sin2\alpha = x+2z. \tag{*}
\]
Equation~(*) has a solution for \(\alpha\) if and only if
\[
x^2+y^2 \ge (x+2z)^2.
\]
Since \(\boldsymbol{y}\in\mathbb{S}^2(1/\sqrt2)\), we have \(x^2+y^2 = \frac12 - z^2\). Hence the solvability condition is
\[
\frac12 - z^2 \ge (x+2z)^2. \tag{†}
\]

We now prove that (†) follows from \(\boldsymbol{y}\in\mathbb{E}\).
Squaring \(|x+(1+\sqrt2)z|\le 1/\sqrt2\) and substituting \(x^2 = \frac12 - y^2 - z^2\) gives
\[
- y^2 + 2(1+\sqrt2)z(x+z) \le 0,
\qquad\text{i.e.}\qquad
2(1+\sqrt2)z(x+z) \le y^2.
\]
Because \(4 < 2(1+\sqrt2)\), we obtain \(4z(x+z) \le y^2\), i.e. \(y^2 \ge 4xz+4z^2\).
Adding \(x^2\) to both sides and using \(x^2+y^2 = \frac12 - z^2\) yields exactly (†).
Therefore (†) holds, and there exists \(\alpha\) with \(\boldsymbol{d}'(\alpha)\cdot\boldsymbol{y}=0\).
Consequently \(\boldsymbol{y}\in\mathcal{P}(\boldsymbol{d}'(\alpha))\subseteq\bigcup_{\boldsymbol{u}\in\Gamma}\mathcal{P}(\boldsymbol{u})\).

This proves \(\mathbb{E}\subseteq\bigcup_{\boldsymbol{u}\in\Gamma}\mathcal{P}(\boldsymbol{u})\).
Since every point of \(\Gamma\) is red, Claim~\ref{claim:red-iff-bluecircle-main3} implies \(\mathcal{P}(\boldsymbol{u})\) is entirely blue for every \(\boldsymbol{u}\in\Gamma\); hence \(\mathbb{E}\) is blue.
Finally, if \(\boldsymbol{x}\in\mathcal{S}\), then Lemma~\ref{lem:sqrt2-strip} gives \(\mathcal{P}(\boldsymbol{x})\subseteq\mathbb{E}\), so \(\mathcal{P}(\boldsymbol{x})\) is entirely blue, and Claim~\ref{claim:red-iff-bluecircle-main3} forces \(\boldsymbol{x}\) to be red.
\end{poc}

\begin{prop}\label{lem:sqrt2-enlargement}
Starting from the red circle \(\mathcal S\) and repeating the enlargement step twice,
one obtains a red circle \(\mathcal S_2\) around the axis \(\mathcal L\) whose Euclidean
diameter is strictly larger than \(1\).
\end{prop}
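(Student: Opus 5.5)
The plan is to make the enlargement step precise as a statement about circles of latitude around the axis $\mathcal L$, and then iterate it twice with explicit numbers. Let $\boldsymbol e$ be the unit vector along $\mathcal L$, as in the proof of Lemma~\ref{lem:sqrt2-strip}. For $0\le h<1/\sqrt2$ write $\mathcal S(h):=\{\boldsymbol x\in\mathbb S:\boldsymbol e\cdot\boldsymbol x=h\}$. Its Euclidean diameter is $2\sqrt{1/2-h^2}$, so the goal is to reach some $h$ with $h^2<1/4$. By Claim~\ref{lem:sqrt2-orth}, $\mathcal S=\mathcal S(h_0)$ is red, where
\[
h_0^2=\frac{c^2}{\|\boldsymbol q\|^2}=\frac{(1+\sqrt2/2)^2}{4+2\sqrt2}=\frac{2+\sqrt2}{8}>\frac14 .
\]

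\emph{The enlargement step.} Assume $\mathcal S(h)$ is red. The computation in the proof of Lemma~\ref{lem:sqrt2-strip} applies verbatim to any $h$: together with Claim~\ref{claim:red-iff-bluecircle-main3}, it shows that the strip $\mathbb E(h):=\{\boldsymbol y:|\boldsymbol e\cdot\boldsymbol y|\le w\}$ is entirely blue, where $w:=\sqrt{1/2-h^2}$.

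Now take any $\boldsymbol z\in\mathbb S$ with $\boldsymbol e\cdot\boldsymbol z=s$. The great circle $\mathcal P(\boldsymbol z)$ can be parametrized by an angle $\theta$ so that its height is $\boldsymbol e\cdot\boldsymbol y(\theta)=m\sin\theta$. The maximal height is $m=\|\boldsymbol e-2s\boldsymbol z\|/\sqrt2=\sqrt{1/2-s^2}$, obtained from the component of $\boldsymbol e$ orthogonal to $\boldsymbol z$.

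Points of $\mathcal P(\boldsymbol z)$ at distance $1$ correspond to $\theta$-values differing by $\pi/2$, by \eqref{eq:unit-orth-main3}. The points at $\theta=\pm\pi/4$ have height $\pm m/\sqrt2$, so both lie in $\mathbb E(h)$ as soon as $m\le\sqrt2\,w$. In that case they form a blue pair at distance $1$ on $\mathcal P(\boldsymbol z)$. By (H2), exactly as in the converse part of Claim~\ref{claim:red-iff-bluecircle-main3}, the point $\boldsymbol z$ must then be red.

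The condition $m\le\sqrt2 w$ reads $1/2-s^2\le 2(1/2-h^2)$, that is, $s^2\ge 2h^2-1/2$. Hence, whenever $2h^2>1/2$, the circle $\mathcal S(h')$ is red, where
\[
h'^2:=2h^2-\tfrac12<h^2 .
\]
This circle is strictly larger than $\mathcal S(h)$.

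\emph{Iterating twice.} From $h_0^2=(2+\sqrt2)/8$ we get $h_1^2=(2+\sqrt2)/4-1/2=\sqrt2/4$, which is still greater than $1/4$, so the step applies again. The second step gives $h_2^2=\sqrt2/2-1/2\approx0.207<1/4$. Therefore $\mathcal S_2:=\mathcal S(h_2)$ is a red circle around $\mathcal L$ with diameter
\[
2\sqrt{1/2-h_2^2}=2\sqrt{1-\sqrt2/2}\approx1.08>1,
\]
as required.

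The main obstacle is the middle geometric fact: correctly computing the maximal height of the tilted great circle $\mathcal P(\boldsymbol z)$ relative to the strip, and checking that the symmetric choice $\theta=\pm\pi/4$ is the right (optimal) pair of orthogonal points. Choosing a pair of orthogonal points lying inside the blue strip, rather than demanding that the whole great circle lie inside it, is essential. The latter would only give $|s|\ge h$, so the red circles would shrink instead of grow.
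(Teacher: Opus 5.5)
Your proposal is correct and follows essentially the same route as the paper. In both arguments, the generalized Lemma~\ref{lem:sqrt2-strip} makes the strip $\mathbb{E}(h)$ blue, a blue orthogonal pair inside that strip forces red points at height $\sqrt{2h^2-1/2}$, and two iterations from $h_0^2=(2+\sqrt2)/8$ give a red circle of diameter $\sqrt{4-2\sqrt2}>1$. The only difference is presentational: the paper builds the explicit pair $\boldsymbol{k}_h,\boldsymbol{f}_h$ (both at height $\sqrt{1/2-h^2}$) and then rotates about $\mathcal L$, whereas your parametrization of $\mathcal P(\boldsymbol z)$ with the pair at $\theta=\pm\pi/4$ treats every point of the target circle at once.
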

\begin{proof}
Let
\(
r=\frac1{\sqrt2},\) and
\(
\boldsymbol{e}:=\frac{1}{\sqrt{4+2\sqrt2}}\,(1,0,1+\sqrt2),
\)
so that $\boldsymbol{e}$ is the unit vector along the axis $\mathcal L$.
Choose also
\[
\boldsymbol{u}:=\frac{1}{\sqrt{4+2\sqrt2}}\,(1+\sqrt2,0,-1),
\qquad
\boldsymbol{v}:=(0,1,0).
\]
Then $\{\boldsymbol{u},\boldsymbol{v},\boldsymbol{e}\}$ is an orthonormal basis of $\mathbb{R}^{3}$, and the great circle
\(
\mathcal D=\{(x,y,z)\in\mathbb S:y=0\}
\)
satisfies exactly
\(
\mathcal D=\mathbb S\cap \operatorname{span}\{\boldsymbol{u},\boldsymbol{e}\}.
\)

For every $h\in[0,r]$, write
\[
a(h):=\sqrt{r^{2}-h^{2}},
\qquad
\mathcal S(h):=\{\boldsymbol{x}\in\mathbb S:\boldsymbol{e}\cdot \boldsymbol{x}=h\}.
\]
Thus $\mathcal S(h)$ is the circle obtained by rotating the two points
\(
a(h)\boldsymbol{u}+h\boldsymbol{e},
\) and
\(
-a(h)\boldsymbol{u}+h\boldsymbol{e}
\)
about the axis $\mathcal L$.

We first record the general form of the blue strip generated by $\mathcal S(h)$.

\begin{claim}
For every $h\in[0,r]$,
\[
\bigcup_{\boldsymbol{x}\in\mathcal S(h)}\mathcal P(\boldsymbol{x})
=
\mathbb E(h)
:=
\{\boldsymbol{y}\in\mathbb S: |\boldsymbol{e}\cdot \boldsymbol{y}|\le a(h)\}.
\]
\end{claim}
\begin{poc}
This is the same computation as in Lemma~\ref{lem:sqrt2-strip}, but with a general height parameter.
Fix $\boldsymbol{y}\in\mathbb S$, and decompose
\(
\boldsymbol{x}=h\boldsymbol{e}+\boldsymbol{\xi},
\)
and
\(
\boldsymbol{y}=s\boldsymbol{e}+\boldsymbol{\eta},
\)
with $\boldsymbol{\xi},\boldsymbol{\eta}\perp \boldsymbol{e}$.
Then
\[
\|\boldsymbol{\xi}\|^{2}=r^{2}-h^{2}=a(h)^{2},
\qquad
\|\boldsymbol{\eta}\|^{2}=r^{2}-s^{2}.
\]
Now $\boldsymbol{y}\in\mathcal P(\boldsymbol{x})$ is equivalent to
\[
\boldsymbol{x}\cdot \boldsymbol{y}=0
\iff
hs+\boldsymbol{\xi}\cdot \boldsymbol{\eta}=0.
\]
For fixed $\boldsymbol{y}$, such a vector $\boldsymbol{\xi}$ with $\|\boldsymbol{\xi}\|=a(h)$ exists if and only if
\[
|hs|\le a(h)\sqrt{r^{2}-s^{2}}.
\]
Squaring and simplifying gives
\(
s^{2}\le a(h)^{2}.
\)
Since $s=\boldsymbol{e}\cdot \boldsymbol{y}$, this proves the claim.
\end{poc}

We now isolate one enlargement step.
\begin{claim}
Assume that $h\in[1/2,r)$ and that the circle $\mathcal S(h)$ is red.
Then the circle
\(
\mathcal S\!\left(\sqrt{2h^{2}-r^{2}}\right)
\)
is also red.
\end{claim}
\begin{poc}
Since $\mathcal S(h)$ is red, Claim~\ref{claim:red-iff-bluecircle-main3} implies that
$\mathcal P(\boldsymbol{x})$ is entirely blue for every $\boldsymbol{x}\in\mathcal S(h)$.
By the previous claim, the whole strip
\[
\mathbb E(h)=\{\boldsymbol{y}\in\mathbb S:|\boldsymbol{e}\cdot \boldsymbol{y}|\le a(h)\}
\]
is therefore blue. Set $a:=a(h)=\sqrt{r^{2}-h^{2}}$ and define
\[
\boldsymbol{x}_{h}:=-a\boldsymbol{u}+h\boldsymbol{e}\in \mathcal S(h)\cap \mathcal D,
\qquad
\boldsymbol{k}_{h}:=h\boldsymbol{u}+a\boldsymbol{e}\in \mathcal D.
\]
Since
\(
\boldsymbol{x}_{h}\cdot \boldsymbol{k}_{h}= -ah+ha =0,
\)
we have $\boldsymbol{k}_{h}\in \mathcal P(\boldsymbol{x}_{h})\subseteq \mathbb E(h)$, so $\boldsymbol{k}_{h}$ is blue.
Hence the great circle $\mathcal P(\boldsymbol{k}_{h})$ is transitive by Claim~\ref{claim:blue-iff-transitive-main3}.

Now define
\[
\boldsymbol{f}_{h}
:=
-\frac{a^{2}}{h}\boldsymbol{u}
-\frac{r\sqrt{2h^{2}-r^{2}}}{h}\boldsymbol{v}
+a\boldsymbol{e}.
\]
Because $h\ge 1/2$ and $r^{2}=1/2$, the quantity $2h^{2}-r^{2}$ is nonnegative, so this is well-defined.
A direct computation gives
\[
\|\boldsymbol{f}_{h}\|^{2}
=
\frac{a^{4}}{h^{2}}+\frac{r^{2}(2h^{2}-r^{2})}{h^{2}}+a^{2}
=
r^{2},
\]
and
\[
\boldsymbol{k}_{h}\cdot \boldsymbol{f}_{h}
=
h\left(-\frac{a^{2}}{h}\right)+a\cdot a
=
0.
\]
Thus $\boldsymbol{f}_{h}\in \mathcal P(\boldsymbol{k}_{h})$.
Moreover,
\(
\boldsymbol{e}\cdot \boldsymbol{f}_{h}=a,
\)
so $\boldsymbol{f}_{h}\in \partial \mathbb E(h)$.
Since $\mathbb E(h)$ is blue, the point $\boldsymbol{f}_{h}$ is blue. Define
\[
\boldsymbol{f}_{h}'
:=
-\frac1r\,\boldsymbol{k}_{h}\times \boldsymbol{f}_{h}.
\]
Because $\boldsymbol{k}_{h}$ and $\boldsymbol{f}_{h}$ are orthogonal and both have norm $r$, the vector
$\boldsymbol{f}_{h}'$ also has norm $r$, and it is orthogonal to both
$\boldsymbol{k}_{h}$ and $\boldsymbol{f}_{h}$.
Hence
\(
\boldsymbol{f}_{h}'\in \mathcal P(\boldsymbol{k}_{h})\)
and
\(
\boldsymbol{f}_{h}\cdot \boldsymbol{f}_{h}'=0.
\)
Since $r=1/\sqrt2$, the identity $\boldsymbol{f}_{h}\cdot \boldsymbol{f}_{h}'=0$ is equivalent to
\(
\|\boldsymbol{f}_{h}-\boldsymbol{f}_{h}'\|=1.
\)
Therefore, by transitivity of $\mathcal P(\boldsymbol{k}_{h})$, the point
$\boldsymbol{f}_{h}'$ must be red.

Writing everything in the orthonormal basis
$\{\boldsymbol{u},\boldsymbol{v},\boldsymbol{e}\}$, one computes
\[
\boldsymbol{f}_{h}'
=
-\frac{a\sqrt{2h^{2}-r^{2}}}{h}\boldsymbol{u}
+\frac{ra}{h}\boldsymbol{v}
+\sqrt{2h^{2}-r^{2}}\,\boldsymbol{e}.
\]
In particular,
\(
\boldsymbol{e}\cdot \boldsymbol{f}_{h}'=\sqrt{2h^{2}-r^{2}}.
\)
For every rotation \(\rho\) about the axis \(\mathcal L\), the points
\[
\rho(\boldsymbol{x}_h)\in \mathcal S(h),\qquad
\rho(\boldsymbol{k}_h)\in \mathbb E(h),\qquad
\rho(\boldsymbol{f}_h)\in \partial \mathbb E(h)
\]
satisfy exactly the same geometric relations as
\(\boldsymbol{x}_h,\boldsymbol{k}_h,\boldsymbol{f}_h\).
Since \(\mathcal S(h)\) is entirely red and \(\mathbb E(h)\) is entirely blue, the same argument as above shows that
\(\rho(\boldsymbol{f}_h')\) is red.
As \(\rho\) ranges over all rotations about \(\mathcal L\), the points \(\rho(\boldsymbol{f}_h')\) trace precisely the circle
\(
\mathcal S\!\left(\sqrt{2h^{2}-r^{2}}\right).
\)
Hence every point of this circle is red.
This proves the enlargement step.
\end{poc}

We now apply the enlargement step twice.

By Lemma~\ref{lem:sqrt2-cap}, the original red circle $\mathcal S$ is of the form
\[
\mathcal S=\mathcal S(h_{0}),
\qquad
h_{0}=\frac{c}{\|\boldsymbol{q}\|}
=\frac{1+\sqrt2}{2\sqrt{2+\sqrt2}}.
\]
Hence
\(
h_{0}^{2}=\frac{2+\sqrt2}{8}>\frac14.
\)
Applying the enlargement step once gives a red circle
\[
\mathcal S_{1}=\mathcal S(h_{1}),
\qquad
h_{1}^{2}=2h_{0}^{2}-r^{2}=\frac{\sqrt2}{4}.
\]
Since \(h_{1}^{2}>\frac14\), we may apply the enlargement step once more and obtain a red circle
\[
\mathcal S_{2}=\mathcal S(h_{2}),
\qquad
h_{2}^{2}=2h_{1}^{2}-r^{2}=\frac{\sqrt2-1}{2}.
\]

Finally, the Euclidean radius of the circle $\mathcal S(h)$ equals $\sqrt{r^{2}-h^{2}}$, so
\[
\operatorname{diam}(\mathcal S_{2})
=
2\sqrt{r^{2}-h_{2}^{2}}
=
2\sqrt{\frac12-\frac{\sqrt2-1}{2}}
=
\sqrt{4-2\sqrt2}>1.
\]
This completes the proof.
\end{proof}

\begin{proof}[Completion of the case \(a/r=\sqrt{2}\)]
By Claim~\ref{lem:sqrt2-orth}, the whole strip $\mathbb{E}$ is blue, while $\mathcal{S}$ is a red circle.
Proposition~\ref{lem:sqrt2-enlargement} then yields a red circle $\mathcal{S}_{2}$ of Euclidean diameter larger than $1$.
Hence $\mathcal{S}_{2}$ contains two red points at mutual distance exactly $1$, contradicting \textnormal{(H1)}.
\end{proof}

\subsection{The case \(a/r=\sqrt{3}\)}\label{subsec:2.2}

In this case we scale so that
\(
r=\frac{1}{\sqrt{3}},\)
and \(
a=1.
\)
Then for $\boldsymbol{x},\boldsymbol{y}\in \mathbb{S}$,
\[
\|\boldsymbol{x}-\boldsymbol{y}\|=1
\iff
\langle \boldsymbol{x},\boldsymbol{y}\rangle =-\frac16.
\]
Then
\[
\mathcal{P}(\boldsymbol{x})
=
\{\boldsymbol{y}\in \mathbb{S}:\|\boldsymbol{x}-\boldsymbol{y}\|=1\}
=
\{\boldsymbol{y}\in \mathbb{S}:\langle \boldsymbol{x},\boldsymbol{y}\rangle =-1/6\}
\]
is a circle with center \(-\frac{\boldsymbol{x}}{2}\) and Euclidean radius \(\frac12\), any two antipodal points on $\mathcal{P}(\boldsymbol{x})$ are at mutual distance $1$.

Exactly as above, we can obtain the following two claims and we omit the repeated details.

\begin{claim}\label{claim:sqrt3-red}
A point $\boldsymbol{x}\in \mathbb{S}$ is red if and only if every point of $\mathcal{P}(\boldsymbol{x})$ is blue.
\end{claim}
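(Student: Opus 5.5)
We prove the two implications separately, following the proof of Claim~\ref{claim:red-iff-bluecircle-main3}; only the geometry of $\mathcal{P}(\boldsymbol{x})$ changes. Here $\mathbb{S}=\mathbb{S}^2(1/\sqrt3)$, $a=1$, and by the computation preceding the claim, $\mathcal{P}(\boldsymbol{x})$ is the circle with center $-\boldsymbol{x}/2$ and Euclidean radius $1/2$.

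\emph{Forward direction.} Suppose $\boldsymbol{x}$ is red. Every $\boldsymbol{y}\in\mathcal{P}(\boldsymbol{x})$ satisfies $\|\boldsymbol{x}-\boldsymbol{y}\|=1$. If some such $\boldsymbol{y}$ were red, then $\{\boldsymbol{x},\boldsymbol{y}\}$ would be a red pair at distance $1$, contradicting \textnormal{(H1)}. Hence $\mathcal{P}(\boldsymbol{x})$ is entirely blue.

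\emph{Backward direction.} Suppose $\mathcal{P}(\boldsymbol{x})$ is entirely blue, and assume for contradiction that $\boldsymbol{x}$ is blue. We need two points $\boldsymbol{u},\boldsymbol{v}\in\mathcal{P}(\boldsymbol{x})$ with $\|\boldsymbol{u}-\boldsymbol{v}\|=1$. Since $\mathcal{P}(\boldsymbol{x})$ has radius $1/2$, any two antipodal points on it work; such pairs exist, as noted right after the description of $\mathcal{P}(\boldsymbol{x})$. Both $\boldsymbol{u}$ and $\boldsymbol{v}$ also lie at distance $1$ from $\boldsymbol{x}$. Therefore $\{\boldsymbol{x},\boldsymbol{u},\boldsymbol{v}\}$ is a blue equilateral triangle of side length $1$, contradicting \textnormal{(H2)}. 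So $\boldsymbol{x}$ is red.

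The only point needing care is that, unlike the case $a/r=\sqrt2$, the circle $\mathcal{P}(\boldsymbol{x})$ is no longer a great circle. Its diameter is exactly $1$, so the required unit pairs are precisely the antipodal pairs, and these exist. There is also a quick sanity check that all the geometry is consistent: for $\boldsymbol{u}$ in the plane $\langle \boldsymbol{x},\cdot\rangle=-1/6$ with $\|\boldsymbol{u}+\boldsymbol{x}/2\|=1/2$, its antipode is $\boldsymbol{v}=-\boldsymbol{x}-\boldsymbol{u}$. This gives $\|\boldsymbol{v}\|^2=\|\boldsymbol{x}+\boldsymbol{u}\|^2=1/3+1/3-1/3=1/3$ and $\langle\boldsymbol{x},\boldsymbol{v}\rangle=-1/3+1/6=-1/6$. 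So $\boldsymbol{v}$ lies on $\mathbb{S}\cap\mathcal{P}(\boldsymbol{x})$, and the three points form the maximal inscribed equilateral triangle.
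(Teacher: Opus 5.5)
Your proposal is correct and takes the same route as the paper, which says this claim follows "exactly as above" from Claim~\ref{claim:red-iff-bluecircle-main3}: \textnormal{(H1)} gives the forward direction, and for the converse any antipodal pair on the radius-$1/2$ circle $\mathcal{P}(\boldsymbol{x})$, together with $\boldsymbol{x}$, forms a unit equilateral triangle that would violate \textnormal{(H2)}. Your explicit check that $\boldsymbol{v}=-\boldsymbol{x}-\boldsymbol{u}$ lies on $\mathcal{P}(\boldsymbol{x})$ at distance $1$ from $\boldsymbol{u}$ is a correct verification of the detail the paper omits.
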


\begin{claim}\label{claim:sqrt3-blue}
A point $\boldsymbol{x}\in \mathbb{S}$ is blue if and only if every antipodal pair on $\mathcal{P}(\boldsymbol{x})$ receives opposite colors.
\end{claim}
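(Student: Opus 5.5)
The argument mirrors Claims~\ref{claim:red-iff-bluecircle-main3} and~\ref{claim:blue-iff-transitive-main3}. The geometric input is that, for $r=1/\sqrt3$ and $a=1$, the circle $\mathcal P(\boldsymbol{x})$ has Euclidean radius $1/2$. Hence two points of $\mathcal P(\boldsymbol{x})$ are at distance $1$ exactly when they are antipodal on this circle, and each such pair forms, together with $\boldsymbol{x}$, an equilateral triangle of side $1$. Conversely, any unit equilateral triangle inscribed in $\mathbb S$ through $\boldsymbol{x}$ has its other two vertices forming an antipodal pair of $\mathcal P(\boldsymbol{x})$. First I will verify these facts directly from $\langle\boldsymbol{x},\boldsymbol{y}\rangle=-1/6$: for $\boldsymbol{y},\boldsymbol{y}'\in\mathcal P(\boldsymbol{x})$, write $\boldsymbol{y}=-\boldsymbol{x}/2+\boldsymbol{w}$ with $\boldsymbol{w}\perp\boldsymbol{x}$ and $\|\boldsymbol{w}\|=1/2$; then $\|\boldsymbol{y}-\boldsymbol{y}'\|=1$ iff $\boldsymbol{w}'=-\boldsymbol{w}$.

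\textbf{Claim~\ref{claim:sqrt3-red}.} If $\boldsymbol{x}$ is red, every $\boldsymbol{y}\in\mathcal P(\boldsymbol{x})$ satisfies $\|\boldsymbol{x}-\boldsymbol{y}\|=1$. By (H1) no red pair at distance $1$ exists, so every such $\boldsymbol{y}$ is blue. Conversely, suppose $\mathcal P(\boldsymbol{x})$ is entirely blue and pick an antipodal pair $\boldsymbol{y},\boldsymbol{y}'$ on it. If $\boldsymbol{x}$ were blue, then $\{\boldsymbol{x},\boldsymbol{y},\boldsymbol{y}'\}$ would be a blue unit equilateral triangle, contradicting (H2). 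So $\boldsymbol{x}$ is red.

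\textbf{Claim~\ref{claim:sqrt3-blue}.} Suppose $\boldsymbol{x}$ is blue and let $\boldsymbol{y},\boldsymbol{y}'$ be antipodal on $\mathcal P(\boldsymbol{x})$.
\begin{itemize}
\item If both were red, they would be a red pair at distance $1$, contradicting (H1).
\item If both were blue, then together with $\boldsymbol{x}$ they would form a blue unit equilateral triangle, contradicting (H2).
\end{itemize}
Hence every antipodal pair receives opposite colors. Conversely, suppose every antipodal pair on $\mathcal P(\boldsymbol{x})$ receives opposite colors. If $\boldsymbol{x}$ were red, Claim~\ref{claim:sqrt3-red} would make all of $\mathcal P(\boldsymbol{x})$ blue. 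Any antipodal pair would then be monochromatic blue, a contradiction. So $\boldsymbol{x}$ is blue.

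No step is a real obstacle. The only point needing care is the identification of unit chords of $\mathcal P(\boldsymbol{x})$ with its diameters, since the circle's diameter is exactly $1$. This is what replaces the two points $\boldsymbol{u}_\pm$ at distance $1$ used in the great-circle case.
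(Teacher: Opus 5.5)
Your proposal is correct and matches the paper's intended proof: the paper omits the details, stating that this claim follows exactly as Claims~\ref{claim:red-iff-bluecircle-main3} and~\ref{claim:blue-iff-transitive-main3} did on the great circle. Your argument is that adaptation, with the two points at distance $1$ from $\boldsymbol{u}$ replaced by the diametric partner on the radius-$1/2$ circle $\mathcal P(\boldsymbol{x})$, and your check via $\boldsymbol{y}=-\boldsymbol{x}/2+\boldsymbol{w}$ fills in the unit-chord/diameter identification that the paper leaves implicit.
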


Fix
\(
\boldsymbol{b}:=\left(0,0,\frac1{\sqrt3}\right)
\)
red. Then $\mathcal{C}:=\mathcal{P}(\boldsymbol{b})$ is entirely blue. Choose
\[
\boldsymbol{c}:=\left(0,-\frac12,-\frac1{2\sqrt3}\right)\in \mathcal{C}.
\]
Then $\mathcal{D}:=\mathcal{P}(\boldsymbol{c})$ is transitive in the above antipodal sense. Consider
\[
\boldsymbol{a}_{1}
=
\left(-\frac12,\frac14,\frac1{4\sqrt3}\right),
\qquad
\boldsymbol{a}_{2}
=
\left(\frac12,\frac14,\frac1{4\sqrt3}\right).
\]
These are antipodal on $\mathcal{D}$, so they have opposite colors. Without loss of generality, assume that $\boldsymbol{a}_{1}$ is blue and $\boldsymbol{a}_{2}$ is red.
Set
\(
\mathcal{D}':=\mathcal{P}(\boldsymbol{a}_{2}).
\)
Then $\mathcal{D}'$ is entirely blue by Claim~\ref{claim:sqrt3-red}. 

Let \(\alpha\in[0,2\pi)\) and define
\[
\boldsymbol{k}(\alpha) = \left( \frac{\cos\alpha}{2},\; \frac{\sin\alpha}{2},\; -\frac{1}{2\sqrt{3}} \right).
\]
Clearly \(\|\boldsymbol{k}(\alpha)\| = 1/\sqrt{3}\), so \(\boldsymbol{k}(\alpha)\in\mathbb{S}\). Moreover, $\mathcal{C}=\{\boldsymbol{k}(\alpha):\alpha\in[0,2\pi)\}$. Note that 
\[
\mathcal{P}(\boldsymbol{k}(\alpha)) = \left\{ \boldsymbol{x}\in\mathbb{S} : \boldsymbol{x}\cdot\boldsymbol{k} = -\tfrac16 \right\}
= \left\{ (x,y,z)\in\mathbb{S}:
3\cos\alpha\,x + 3\sin\alpha\,y - \sqrt{3}\,z = -1 \right\},
\]
and
\[
\mathcal{D}' := \mathcal{P}(\boldsymbol{a}_2) = \left\{ \boldsymbol{x}\in\mathbb{S} : \boldsymbol{x}\cdot\boldsymbol{a}_2 = -\tfrac16 \right\}
= \left\{ (x,y,z)\in\mathbb{S}:
6x+3y+\sqrt{3}\,z = -2 \right\}.
\]
The circles \(\mathcal{P}(\boldsymbol{k}(\alpha))\) and \(\mathcal{D}'\) lie in the planes
\[
\Pi_1:\; \boldsymbol{x}\cdot\boldsymbol{k}(\alpha)=-\frac16,\qquad
\Pi_2:\; \boldsymbol{x}\cdot\boldsymbol{a}_2=-\frac16.
\]
Let \(\theta\) be the angle between the normals \(\boldsymbol{k}(\alpha)\) and \(\boldsymbol{a}_2\). 
The distance from the center of the sphere $\mathbb{S}$ to the line of intersection of the two planes is \(d = \frac{\sqrt{3}}{6\cos(\theta/2)}\). The two circles intersect precisely when this line meets the sphere, that is, when \(d \le r = 1/\sqrt3\). This condition is equivalent to \(1+2\cos\alpha+\sin\alpha \ge 0\), which gives
\[
\alpha \in [0,\,2\arctan 3]\;\cup\;[\tfrac{3\pi}{2},\,2\pi).
\]
Hence the two circles \(\mathcal{P}(\boldsymbol{k})\) and \(\mathcal{D}'\) intersect if and only if $\alpha\in[0,2\arctan{3}]\cup[3\pi/2,2\pi)$. In the following, we assume that $\alpha\in[0,2\arctan{3}]\cup[3\pi/2,2\pi)$. 
The intersection points $\boldsymbol{x}$ of $\mathcal{P}(\boldsymbol{k})$ and $\mathcal{D}'$ satisfy the system
\begin{equation}\label{eq:intersection of sqrt3}
    \boldsymbol{x}\cdot\boldsymbol{k}(\alpha) = -\frac{1}{6},\qquad \boldsymbol{x}\cdot\boldsymbol{a}_2=-\frac{1}{6}, \qquad \|\boldsymbol{x}\|^2 = r^2=\frac{1}{3}.
\end{equation}
Solving the system~\eqref{eq:intersection of sqrt3} (the elementary algebra is summarized in~\ref{app:sqrt3-derivation}) yields the two intersection points $\{\boldsymbol{d}_1,\boldsymbol{d}_2\}=\mathcal{P}(\boldsymbol{k})\cap\mathcal{D}'$, where
\[
\begin{aligned}
\boldsymbol{d}_1 &= \left(
\frac{-2(1+\cos\alpha) - \dfrac{\Delta}{\Sigma}\,(1+\sin\alpha)}{D},\;
\frac{-(2\sin\alpha+1) + \dfrac{\Delta}{\Sigma}\,(2+\cos\alpha)}{D},\;
\frac{1 - \dfrac{3\Delta}{\Sigma}\,(\cos\alpha-2\sin\alpha)}{\sqrt{3}\,D}
\right),\\[10pt]
\boldsymbol{d}_2 &= \left(
\frac{-2(1+\cos\alpha) + \dfrac{\Delta}{\Sigma}\,(1+\sin\alpha)}{D},\;
\frac{-(2\sin\alpha+1) - \dfrac{\Delta}{\Sigma}\,(2+\cos\alpha)}{D},\;
\frac{1 + \dfrac{3\Delta}{\Sigma}\,(\cos\alpha-2\sin\alpha)}{\sqrt{3}\,D}
\right),
\end{aligned} 
\]
and
\[
D = 7 + 6\cos\alpha + 3\sin\alpha,\qquad
\Delta = \sqrt{1+2\cos\alpha+\sin\alpha},\qquad
\Sigma = \sqrt{3-2\cos\alpha-\sin\alpha}.
\]
Then both \(\boldsymbol{d}_{1},\boldsymbol{d}_{2}\) are blue.

Define
\(
\boldsymbol{d}_1'\) to be the antipodal point of \(\boldsymbol{d}_1\) on \(\mathcal{P}(\boldsymbol{k}),\)
and similarly \(\boldsymbol{d}_2'\). Then \(\boldsymbol{d}_1'(\alpha)\) and \(\boldsymbol{d}_2'(\alpha)\) must be red by Claim~\ref{claim:sqrt3-blue}. For any \(\boldsymbol{x}\in\mathcal{P}(\boldsymbol{k})\), its antipodal point on the small circle \(\mathcal{P}(\boldsymbol{k})\) is \(-\boldsymbol{k} - \boldsymbol{x}\). Hence
\(
\boldsymbol{d}_1' = -\boldsymbol{k} - \boldsymbol{d}_1\) and
\(
\boldsymbol{d}_2' = -\boldsymbol{k} - \boldsymbol{d}_2.
\)
More explicitly,
\[
\begin{aligned}
\boldsymbol{d}_1' &= \left(
-\frac{\cos\alpha}{2} - x_1,\; -\frac{\sin\alpha}{2} - y_1,\; \frac{1}{2\sqrt{3}} - z_1
\right),\quad
\boldsymbol{d}_2' &= \left(
-\frac{\cos\alpha}{2} - x_2,\; -\frac{\sin\alpha}{2} - y_2,\; \frac{1}{2\sqrt{3}} - z_2
\right),
\end{aligned}
\]
where \((x_i,y_i,z_i)\) are the components of \(\boldsymbol{d}_i\) given above. 
One checks that $\boldsymbol{d}_1'(2\arctan 3)=\boldsymbol{d}_1'(3\pi/2)$ (see~\ref{app:sqrt3-derivation}); hence the map $\alpha\mapsto\boldsymbol{d}_1'(\alpha)$ on $[0,2\arctan 3]\cup[3\pi/2,2\pi)$ extends to a continuous closed curve
\[
\Gamma := \left\{ \boldsymbol{d}_1'(\alpha) : \alpha\in[0,2\arctan 3]\cup[3\pi/2,2\pi) \right\}
\]
on $\mathbb{S}$.
Let $\boldsymbol{q}:=\boldsymbol{d}_2'(0)$, then
\[
   \boldsymbol{q} = \left(\frac{-5-2\sqrt{3}}{26},\; \frac{1+3\sqrt{3}}{13},\; \frac{11-6\sqrt{3}}{26\sqrt{3}}\right).
\]

\begin{claim}\label{claim:sqrt3-final}
    \(
    \mathcal{P}(\boldsymbol{q})\cap \Gamma\neq\varnothing.
    \)
\end{claim}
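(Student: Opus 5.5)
The plan is to prove Claim~\ref{claim:sqrt3-final} with the intermediate value theorem along the closed curve $\Gamma$. Recall that
\[
\mathcal{P}(\boldsymbol{q})=\{\boldsymbol{x}\in\mathbb S:\boldsymbol{x}\cdot\boldsymbol{q}=-\tfrac16\}.
\]
So it suffices to show that the continuous function
\[
f(\alpha):=\boldsymbol{d}_1'(\alpha)\cdot\boldsymbol{q}+\frac16
\]
takes both a nonnegative and a nonpositive value on the parameter set of $\Gamma$.

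The first step is to make the parameter domain connected. The set $[0,2\arctan 3]\cup[3\pi/2,2\pi)$ is, modulo $2\pi$, the single arc $J=[3\pi/2,\,2\pi+2\arctan 3]$. On $J$ the quantities $D,\Delta,\Sigma$ are continuous, because $1+2\cos\alpha+\sin\alpha\ge 0$ there. We also need $\Sigma>0$ and $D>0$ on $J$. The first holds since $3-2\cos\alpha-\sin\alpha\ge 3-\sqrt5>0$. The second holds since $7+6\cos\alpha+3\sin\alpha\ge 7-\sqrt{45}>0$. Hence $\alpha\mapsto \boldsymbol{d}_1'(\alpha)$ is continuous on $J$, and so is $f$. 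The identity $\boldsymbol{d}_1'(2\arctan3)=\boldsymbol{d}_1'(3\pi/2)$ is only needed to close the curve; the intermediate value theorem on the interval $J$ is enough for us.

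The second step is to exhibit a sign change of $f$ on $J$. Natural test points are those where the radicals simplify:
\begin{itemize}
\item At $\alpha=0$ we have $\Delta=\sqrt3$, $\Sigma=1$, $D=7$.
\item At the endpoints $\alpha=3\pi/2$ and $\alpha=2\arctan 3$ we have $\Delta=0$. There the two intersection points coincide, so $\boldsymbol{d}_1'=\boldsymbol{d}_2'$ is a rational expression in $\sqrt3$.
\item At $\alpha=\pi/2$ we have $\Delta=\sqrt2$, $\Sigma=\sqrt2$, $D=10$, hence $\Delta/\Sigma=1$, and everything is in $\mathbb{Q}(\sqrt3)$.
\end{itemize}
Since $\boldsymbol{q}\in\mathbb{Q}(\sqrt3)^3$, at each of these points $f$ is an explicit number of the form $u+v\sqrt3$, whose sign can be certified by elementary comparisons.

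There is also a geometric guide for where the sign change should occur. The points $\boldsymbol{q}=\boldsymbol{d}_2'(0)$ and $\boldsymbol{d}_1'(0)$ both lie on the circle $\mathcal{P}(\boldsymbol{k}(0))$. At an endpoint of $J$, the curve $\Gamma$ passes through the point where $\boldsymbol{d}_1'$ and $\boldsymbol{d}_2'$ merge. We therefore expect $f(0)$ and $f$ at one of the endpoints, or at $\pi/2$, to have opposite signs. Once two such values are found, the intermediate value theorem gives $\alpha^*\in J$ with $f(\alpha^*)=0$, that is, $\boldsymbol{d}_1'(\alpha^*)\in\mathcal{P}(\boldsymbol{q})\cap\Gamma$.

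The main obstacle is the bookkeeping in the second step. The printed formulas for $\boldsymbol{d}_1$ must be evaluated correctly. As a safeguard, I will first check in each case that $\|\boldsymbol{d}_1'(\alpha)\|^2=1/3$ and that $\boldsymbol{d}_1'(\alpha)\cdot\boldsymbol{k}(\alpha)=-1/6$, catching any sign or convention slip in the parametrization.

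If the simple test points $\{3\pi/2,\,0,\,\pi/2,\,2\arctan3\}$ all give $f$ of the same sign, I will fall back on a different argument. The fallback shows that $\mathcal{P}(\boldsymbol{q})$ separates two points of $\Gamma$ on the sphere: it is a circle bounding a cap, so it suffices to find one point of $\Gamma$ with $\boldsymbol{x}\cdot\boldsymbol{q}>-1/6$ and one with $\boldsymbol{x}\cdot\boldsymbol{q}<-1/6$. I would locate these by maximizing and minimizing $f$ numerically, and then certify the signs at nearby parameter values where $\Delta/\Sigma$ is rational.
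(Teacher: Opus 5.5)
Your strategy is the paper's: apply the intermediate value theorem along $\Gamma$ between two explicit parameter values. The pair $\alpha=0$, $\alpha=\pi/2$ on your list is exactly the pair the paper uses. Both lie in $[0,2\arctan 3]$, so, as you note, the closing identity is not needed.

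\textbf{The gap.} You never carry out the evaluation. The proposal only ``expects'' a sign change and offers a numerical fallback, but for this claim the sign check \emph{is} the proof, so nothing has been established yet. Doing the computation gives
$\boldsymbol{d}_1'(0)=\bigl(\tfrac{-5+2\sqrt3}{26},\tfrac{1-3\sqrt3}{13},\tfrac{11+6\sqrt3}{26\sqrt3}\bigr)$ and $\boldsymbol{d}_1'(\pi/2)=\bigl(\tfrac{2}{5},-\tfrac{2}{5},-\tfrac{\sqrt3}{15}\bigr)$. Hence
\[
f(0)=\tfrac{1}{26}>0,\qquad f(\pi/2)=\tfrac{2-7\sqrt3}{65}<0.
\]
In the paper's form this reads $\|\boldsymbol{d}_1'(0)-\boldsymbol{q}\|^2=\tfrac{12}{13}<1$ and $\|\boldsymbol{d}_1'(\pi/2)-\boldsymbol{q}\|^2=\tfrac{61+14\sqrt3}{65}>1$. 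These two values, together with continuity on $[0,\pi/2]$, are the whole proof.

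\textbf{Two corrections to your set-up.}
\begin{enumerate}
\item At $\alpha=0$ one has $D=7+6=13$, not $7$. Your proposed norm check would have caught this.
\item Your geometric heuristic pairing $\alpha=0$ with an endpoint would fail. At both endpoints $\boldsymbol{d}_1'=\boldsymbol{a}_2$, and $\boldsymbol{a}_2\cdot\boldsymbol{q}=-\tfrac{1}{24}$, so $f=\tfrac{1}{8}>0$, the same sign as $f(0)$. Among your test points, only $\alpha=\pi/2$ detects the crossing.
\end{enumerate}
The numerical fallback is unnecessary.
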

\begin{poc}
   Let $\boldsymbol{p}_1:=\boldsymbol{d}_1'(0)\in\Gamma\text{ and  }\boldsymbol{p}_2:=\boldsymbol{d}_1'(\pi/2)\in\Gamma$, then 
   \[
   \boldsymbol{p}_1=\left(\frac{-5+2\sqrt{3}}{26},\; \frac{1-3\sqrt{3}}{13},\; \frac{11+6\sqrt{3}}{26\sqrt{3}}\right),\quad
   \boldsymbol{p}_2=\left(\frac25,-\frac25,-\frac{\sqrt3}{15}\right).
   \]
   Define $f(\boldsymbol{x})=\|\boldsymbol{x}-\boldsymbol{q}\|-1$ on $\Gamma$. Then $f$ is continuous on $\Gamma$. 
   Since $f(\boldsymbol{p}_1)=\sqrt{\frac{12}{13}}-1<0$ and $f(\boldsymbol{p}_2)=\sqrt{ \frac{61+14\sqrt{3}}{65}}-1>0$, the Intermediate Value Theorem yields a point $\boldsymbol{p}^*\in\Gamma$ with $f(\boldsymbol{p}^*)=0$, i.e.\ $\|\boldsymbol{p}^*-\boldsymbol{q}\|=1$.
   By definition $\mathcal{P}(\boldsymbol{q})=\{\boldsymbol{x}\in\mathbb{S}:\|\boldsymbol{x}-\boldsymbol{q}\|=1\}$; 
   hence $\boldsymbol{p}^*\in\mathcal{P}(\boldsymbol{q})$, and therefore $\mathcal{P}(\boldsymbol{q})\cap\Gamma\neq\varnothing$.
\end{poc}

\begin{proof}[Completion of the case \(a/r=\sqrt{3}\)]
By Claim~\ref{claim:sqrt3-final}, there exists
\(
\boldsymbol{p}^*\in \mathcal{P}(\boldsymbol{q})\cap \Gamma.
\)
Since $\boldsymbol{q}=\boldsymbol{d}_2'(0)$ is red, Claim~\ref{claim:sqrt3-red} implies that the whole circle
$\mathcal{P}(\boldsymbol{q})$
is blue. On the other hand, every point of $\Gamma$ must be red. Therefore, $\boldsymbol{p}^*$ can not be colored properly under our initial assumption \textnormal{(H1)} and \textnormal{(H2)}. This contradiction proves the case $a/r=\sqrt{3}$.
\end{proof}

\subsection{The non-degenerate case}\label{subsec:2.3}

Assume now that
\(
0<\frac{a}{r}<\sqrt{3}\) and
\(
\frac{a}{r}\notin \{\sqrt{2},\sqrt{3}\}.
\)
Let
\(
\theta\in (0,\pi)
\)
denote the central angle subtended by a chord of length $a$ on $\mathbb{S}$, so that
\(
a=2r\sin \frac{\theta}{2}.
\)
Since $a/r\le \sqrt{3}$, we have $\theta\le 2\pi/3$.

We first note that there exists a red point on $\mathbb{S}$; indeed, if every point were blue, then any inscribed equilateral triangle of side length $a$ would be blue, contradicting \textnormal{(H2)}.
Fix such a red point and, after rotation, assume that it is
\(
\boldsymbol{d}:=(0,0,r).
\)
By Lemma~\ref{lem:propagation-main3}, the set
\(
\mathcal{R}_{1}:=\mathcal{P}_{b}(\boldsymbol{d})
\)
is a red circle.

Let $\mathcal{D}$ be the great circle obtained by intersecting $\mathbb{S}$ with the plane $y=0$. Then 
\[
\mathcal{D}=\{(x,y,z)\in\mathbb{R}^3:x^2+z^2=r^2,y=0\}.
\]
Let $\gamma\in (0,\pi)$ be the central angle on $\mathcal{D}$ cut out by the two intersection points
\(
\mathcal{P}_{b}(\boldsymbol{x})\cap \mathcal{D},
\)
where $\boldsymbol{x}\in \mathcal{R}_{1}\cap \mathcal{D}$.
Then 
\begin{equation}\label{eq:def-gamma}
    \gamma = 2\arctan \dfrac{r_b}{\sqrt{r^2 - r_b^2}},
\end{equation}
where $r_b$ is the radius of circle $\mathcal{P}_b(\boldsymbol{x})$.
Actually, for every $\boldsymbol{x}\in \mathcal{D}$, the set
\(
\mathcal{P}_{b}(\boldsymbol{x})\cap \mathcal{D}
\)
consists of the two points on $\mathcal{D}$ with central angle $\gamma$. If $\gamma\ge \theta$, then $\mathcal{R}_{1}$ already contains two red points at angular separation $\theta$, hence at Euclidean distance $a$, contradicting \textnormal{(H1)}.
Therefore we may assume that $\gamma<\theta$. Then $\gamma<\pi$. Note that if $b=\sqrt{2}r$, then $r_b=r$ and formula~\eqref{eq:def-gamma} 
would give $\gamma=\pi$, contradicting $\gamma<\pi$. Hence $b\neq\sqrt{2}r$.

Let $\mathcal{C}_{2}$ denote the set of red points obtained on $\mathcal{D}$ after one further propagation step:
\[
\mathcal{C}_{2}
:=
\Bigl(\bigcup_{\boldsymbol{x}\in \mathcal{R}_{1}} \mathcal{P}_{b}(\boldsymbol{x})\Bigr)\cap \mathcal{D}.
\]
\begin{claim}\label{claim:C2-structure}
$\mathcal{C}_2$ is a closed arc on the great circle $\mathcal{D}$ whose angular length is exactly $2\gamma$.
\end{claim}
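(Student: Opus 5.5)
We must show that $\mathcal{C}_2$ is a closed arc on $\mathcal{D}$ of angular length $2\gamma$. We parametrize $\mathcal{D}$ by angle, writing $\boldsymbol{d}(\phi)=(r\sin\phi,0,r\cos\phi)$, so that $\boldsymbol{d}=\boldsymbol{d}(0)$. The circle $\mathcal{R}_1=\mathcal{P}_b(\boldsymbol{d})$ consists of the points at angular distance $\beta$ from $\boldsymbol{d}$, where $b=2r\sin(\beta/2)$. It meets $\mathcal{D}$ in the two points $\boldsymbol{d}(\pm\beta)$. By the definition of $\gamma$, the circle $\mathcal{P}_b(\boldsymbol{d}(\phi))$ meets $\mathcal{D}$ in the two points $\boldsymbol{d}(\phi\pm\gamma/2)$. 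Note that $\gamma$ is simply the angular diameter $2\beta$ of $\mathcal{P}_b$ seen along a great circle through its center. Indeed, $\tan(\gamma/2)=r_b/\sqrt{r^2-r_b^2}$ with $r_b=r\sin\beta$, so $\gamma=2\beta$ when $\beta\le\pi/2$. So in what follows we may write $\beta=\gamma/2$; the case $\beta>\pi/2$ is excluded by $\gamma<\pi$ and $b\ne\sqrt2 r$.

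The key step is to determine, for a point $\boldsymbol{y}=\boldsymbol{d}(\psi)\in\mathcal{D}$, when there exists $\boldsymbol{x}\in\mathcal{R}_1$ with $\|\boldsymbol{x}-\boldsymbol{y}\|=b$. Equivalently, we ask when the circles $\mathcal{P}_b(\boldsymbol{d})$ and $\mathcal{P}_b(\boldsymbol{y})$ intersect. Two spherical circles of equal angular radius $\beta$, with centers at angular distance $|\psi|\le\pi$, intersect if and only if $|\psi|\le 2\beta$. The "if" direction follows from the triangle inequality for the spherical metric together with continuity and connectedness of the circle. For the "only if" direction, note that if $\boldsymbol{x}$ lies on both circles then the spherical distance satisfies $\operatorname{dist}(\boldsymbol{d},\boldsymbol{y})\le \operatorname{dist}(\boldsymbol{d},\boldsymbol{x})+\operatorname{dist}(\boldsymbol{x},\boldsymbol{y})=2\beta$. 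One small check is needed: since $2\beta=\gamma<\pi$, the condition $|\psi|\le 2\beta$ genuinely describes an arc and not all of $\mathcal{D}$. For the converse in the "if" direction, the function $\boldsymbol{x}\mapsto\operatorname{dist}(\boldsymbol{x},\boldsymbol{y})$ on $\mathcal{R}_1$ takes the values $\big||\psi|-\beta\big|$ and $\min(|\psi|+\beta,\,2\pi-|\psi|-\beta)$, and its range is the interval between them. This interval contains $\beta$ exactly when $|\psi|\le2\beta$ (using $\beta<\pi/2$ to handle the wrap-around term).

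Hence $\mathcal{C}_2=\{\boldsymbol{d}(\psi):|\psi|\le 2\beta\}=\{\boldsymbol{d}(\psi):|\psi|\le\gamma\}$. This is a closed arc centered at $\boldsymbol{d}$ of angular length $2\gamma$, and it is proper because $2\gamma<2\pi$. The arc is closed because the set of $\psi$ satisfying the intersection condition is a closed interval; alternatively, this follows from compactness of $\mathcal{R}_1$.

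I expect the main obstacle to be the bookkeeping that identifies the paper's $\gamma$ with the angular diameter $2\beta$, and the verification that no wrap-around occurs, which relies on $\gamma<\pi$. If the identification $\gamma=2\beta$ fails because of the paper's convention (for instance, if $\gamma$ were instead measured as $2\pi-2\beta$ when $b>\sqrt2 r$), I would redo the intersection criterion using the spherical angle $\min(\beta,\pi-\beta)$ measured from the appropriate center. Specifically, I would replace $\boldsymbol{d}$ by $-\boldsymbol{d}$, noting that $\mathcal{P}_b(\boldsymbol{d})$ is also a circle centered at $-\boldsymbol{d}$, and then apply the same triangle-inequality argument.
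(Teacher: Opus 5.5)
There is a genuine gap. You assert that the case $\beta>\pi/2$ is excluded by $\gamma<\pi$ and $b\neq\sqrt2 r$, and this is false. Your own computation $\tan(\gamma/2)=r_b/\sqrt{r^2-r_b^2}=\sin\beta/|\cos\beta|$ gives $\gamma=2\min(\beta,\pi-\beta)$. So $\gamma<\pi$ and $b\neq\sqrt2 r$ together rule out only $\beta=\pi/2$. For $\beta>\pi/2$, i.e.\ $b>\sqrt2 r$, one has $\gamma=2\pi-2\beta<\pi$.

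This regime really occurs in the non-degenerate range. For example, $a=r$ gives $b=2\sqrt{2/3}\,r>\sqrt2 r$, and this regime is exactly Case 1 of the paper's proof. In it your main argument fails. Two circles of equal angular radius $\beta>\pi/2$ whose centers are at angular distance $\delta$ meet if and only if $\delta\le 2\pi-2\beta$. The triangle-inequality bound $\delta\le 2\beta$ is vacuous here since $2\beta>\pi$, so your formula would give $\mathcal{C}_2=\mathcal{D}$. For the same reason, your statement that $\mathcal{P}_b(\boldsymbol{d}(\phi))\cap\mathcal{D}=\{\boldsymbol{d}(\phi\pm\gamma/2)\}$ is wrong in this regime; the two points are $\boldsymbol{d}(\phi+\pi\pm\gamma/2)$.

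The repair is the contingency in your last paragraph, but it must be carried out as part of the proof, not conditioned on a convention. For $\beta>\pi/2$, $\mathcal{R}_1$ is the circle of angular radius $\beta':=\pi-\beta<\pi/2$ about $-\boldsymbol{d}$, and $\mathcal{P}_b(\boldsymbol{y})$ is the circle of angular radius $\beta'$ about $-\boldsymbol{y}$. Your criterion, valid for radii below $\pi/2$, then gives $\boldsymbol{y}\in\mathcal{C}_2$ if and only if the central angle between $-\boldsymbol{d}$ and $-\boldsymbol{y}$, which equals that between $\boldsymbol{d}$ and $\boldsymbol{y}$, is at most $2\beta'=\gamma$. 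Hence $\mathcal{C}_2=\{\boldsymbol{d}(\psi):|\psi|\le\gamma\}$ in both regimes.

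With this added, your synthetic argument is a valid alternative to the paper's. The paper instead solves $r_b\cos t\cos\varphi-h\sin\varphi=r-b^2/(2r)$ explicitly by a half-angle substitution, reads off the range of $\varphi(t)$, and transfers the case $b<\sqrt2 r$ from the case $b>\sqrt2 r$ by a change of variables. Your route is shorter and makes clear that the arc is always centered at $\boldsymbol{d}$, which necessarily lies in $\mathcal{C}_2$. By contrast, the paper's Case 2 parametrizes $\mathcal{R}_1$ at height $-\sqrt{r^2-r_b^2}$, whereas $\mathcal{P}_b(\boldsymbol{d})$ sits at height $r-b^2/(2r)>0$ when $b<\sqrt2 r$. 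As a result it centers the arc at $-\boldsymbol{d}$, a slip that does not affect the length $2\gamma$.
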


\begin{poc}
We give a unified computation that covers both $b>\sqrt{2}r$ and $b<\sqrt{2}r$.
Parametrize the circle $\mathcal{R}_1$ by
\[
\boldsymbol{x}(t) = \bigl(r_b\cos t,\; r_b\sin t,\; -\sqrt{r^2 - r_b^2}\bigr),\qquad t\in[0,2\pi),
\]
and the great circle $\mathcal{D}$ ($y=0$) by
\[
\boldsymbol{p}(\varphi) = (r\cos\varphi,\,0,\,r\sin\varphi),\qquad \varphi\in[0,2\pi).
\]
The condition $\boldsymbol{p}(\varphi)\in\mathcal{P}_b(\boldsymbol{x}(t))$ is $\|\boldsymbol{p}(\varphi)-\boldsymbol{x}(t)\|^2=b^2$, which expands to
\[
\boldsymbol{p}(\varphi)\cdot\boldsymbol{x}(t)=r^2-\frac{b^2}{2}.
\]
Substituting the coordinates gives
\[
r_b\cos t\cos\varphi - h\,\sin\varphi = r-\frac{b^2}{2r}, \tag{**}
\]
where $h:=\sqrt{r^2-r_b^2}>0$.  Notice that $r^2-r_b^2 = (r-\frac{b^2}{2r})^2$, hence $h=\bigl|r-\frac{b^2}{2r}\bigr|$.

\noindent\textbf{Case 1:} $b>\sqrt{2}r$.  Then $r-\frac{b^2}{2r}<0$, so $h=-(r-\frac{b^2}{2r})$.  
Equation~(**) becomes
\[
r_b\cos t\cos\varphi - h\sin\varphi = -h.
\]
One finds that $\varphi=\pi/2$ is a solution for every $t$; this is the fixed point $\boldsymbol{d}=(0,0,r)$.  
For $\varphi\neq\pi/2$ one obtains
\[
\varphi(t)=\frac{\pi}{2}-2\arctan\!\Bigl(\frac{r_b\cos t}{h}\Bigr).
\]
Set $k:=r_b/h$.  Then $\varphi_2(t)=\frac{\pi}{2}-2\arctan(k\cos t)$ runs over the interval
$[\frac{\pi}{2}-2\arctan k,\;\frac{\pi}{2}+2\arctan k]$ as $t$ varies.  By~\eqref{eq:def-gamma} $\gamma=2\arctan k$, so this interval is $[\frac\pi2-\gamma,\frac\pi2+\gamma]$.  
Hence $\mathcal{C}_2$ is the closed arc $\{\boldsymbol{p}(\varphi):\varphi\in[\frac\pi2-\gamma,\frac\pi2+\gamma]\}$.

\noindent\textbf{Case 2:} $b<\sqrt{2}r$.  Now $r-\frac{b^2}{2r}>0$ and $h=r-\frac{b^2}{2r}$.  
Equation~(**) reads
\[
r_b\cos t\cos\varphi - h\sin\varphi = h.
\]
We reduce this case to Case~1 by symmetry.  Replace $t$ by $t+\pi$ (so $\cos(t+\pi)=-\cos t$) and $\varphi$ by $-\varphi$:
\[
r_b\cos(t+\pi)\cos(-\varphi)-h\sin(-\varphi) = -r_b\cos t\cos\varphi + h\sin\varphi = -(r_b\cos t\cos\varphi - h\sin\varphi) = -h.
\]
Thus $\psi:=-\varphi$ satisfies exactly the equation of Case~1 with parameter $s:=t+\pi$:
\[
r_b\cos s\cos\psi - h\sin\psi = -h.
\]
Consequently, the solutions in Case~2 are obtained from those in Case~1 by
\[
\varphi(t)=-\psi(t+\pi).
\]
From Case~1 we know that the set of angles $\psi(s)$ as $s$ runs over $[0,2\pi)$ is the arc
$[\frac\pi2-\gamma,\frac\pi2+\gamma]$.  Therefore the corresponding $\varphi(t)$ form the arc
$[-\frac\pi2-\gamma,-\frac\pi2+\gamma]$, which modulo $2\pi$ is the same as
$[\frac{3\pi}{2}-\gamma,\frac{3\pi}{2}+\gamma]$.  Its length is again $2\gamma$.

Thus, in both cases $\mathcal{C}_2$ is a closed arc of angular length exactly $2\gamma$ on the great circle $\mathcal{D}$.
\end{poc}

If $2\gamma\ge \theta$, then $\mathcal{C}_{2}$ already contains two red points at angular separation $\theta$, hence at Euclidean distance $a$, contradicting \textnormal{(H1)}.
Therefore we may assume that $2\gamma<\theta$. For $m\ge 2$, define inductively
\[
\mathcal{C}_{m+1}
:=
\Bigl(\bigcup_{\boldsymbol{x}\in \mathcal{C}_{m}} \mathcal{P}_{b}(\boldsymbol{x})\Bigr)\cap \mathcal{D}.
\]

\begin{claim}\label{claim:arc-growth-main3}
For every $m\ge 2$, the set $\mathcal{C}_{m}$ is a red arc on $\mathcal{D}$ of angular length $m\gamma$.
\end{claim}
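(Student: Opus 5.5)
Induction on $m$, with Claim~\ref{claim:C2-structure} as the base case $m=2$. The hypothesis $2\gamma<\theta$ is assumed, and at each stage we also assume $m\gamma<\theta$ (otherwise $\mathcal{C}_m$ already contains two red points at angular distance $\theta$, i.e.\ Euclidean distance $a$, contradicting \textnormal{(H1)}). Redness of every $\mathcal{C}_{m+1}$ is automatic. Each point of $\mathcal{C}_{m+1}$ lies in $\mathcal{P}_b(\boldsymbol{x})$ for some red $\boldsymbol{x}\in\mathcal{C}_m$, so it is red by Lemma~\ref{lem:propagation-main3}(ii). Only the shape of $\mathcal{C}_{m+1}$ needs work.

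The key geometric fact, already noted before Claim~\ref{claim:C2-structure}, is this: for every $\boldsymbol{x}\in\mathcal{D}$, the set $\mathcal{P}_b(\boldsymbol{x})\cap\mathcal{D}$ consists of exactly the two points of $\mathcal{D}$ at angular distance $\gamma/2$ on either side of a fixed point. Write $\boldsymbol{p}(\varphi)$ for points of $\mathcal{D}$. Then $\mathcal{P}_b(\boldsymbol{p}(\varphi))$ is a circle centred on the axis through $\boldsymbol{p}(\varphi)$ with angular radius $\beta$, where $b=2r\sin(\beta/2)$. The plane of $\mathcal{D}$ contains that axis. Hence the intersection is $\{\boldsymbol{p}(\varphi\pm\beta)\}$, and the two points are separated by central angle $2\beta$ (taken mod $2\pi$). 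I will check that this is consistent with \eqref{eq:def-gamma}. The relation $\tan(\gamma/2)=r_b/\sqrt{r^2-r_b^2}=\tan\beta$ (or $\tan(\pi-\beta)$) gives $\gamma=2\beta$ when $\beta<\pi/2$, i.e.\ $b<\sqrt2 r$, and $\gamma=2(\pi-\beta)$ when $b>\sqrt2 r$. In the latter case $\varphi\pm\beta\equiv\varphi+\pi\mp\gamma/2$, so the image points are centred on the antipode of $\boldsymbol{p}(\varphi)$. So in both cases there is an isometry $\sigma$ of $\mathcal{D}$, equal to the identity or the antipodal map, with
\[
\mathcal{P}_b(\boldsymbol{p}(\varphi))\cap\mathcal{D}=\{\sigma(\boldsymbol{p}(\varphi\pm\gamma/2))\}.
\]

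For the inductive step, suppose $\mathcal{C}_m$ is the arc $\{\boldsymbol{p}(\varphi):\varphi\in[c-m\gamma/2,\,c+m\gamma/2]\}$. Then
\[
\mathcal{C}_{m+1}=\sigma\bigl([c-(m+1)\gamma/2,\,c-(m-1)\gamma/2]\cup[c+(m-1)\gamma/2,\,c+(m+1)\gamma/2]\bigr).
\]
If $m\ge 2$, the two sub-arcs have length $m\gamma$ each and their centres are $\gamma$ apart, so they overlap. Their union is a single arc of length $(m+1)\gamma$, provided $(m+1)\gamma<2\pi$, so that it does not wrap around; this is harmless since $(m+1)\gamma<\theta+\gamma<2\pi$. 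Its image under the isometry $\sigma$ is again an arc of length $(m+1)\gamma$. This closes the induction. As a sanity check, for $m=1$ with $\mathcal{C}_1=\mathcal{R}_1\cap\mathcal{D}$ (two points at separation $\gamma$), the recipe produces the length-$2\gamma$ arc of Claim~\ref{claim:C2-structure}, which matches.

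The main obstacle is the bookkeeping in the case $b>\sqrt2 r$, where the map flips to the antipodal side. One must verify that $\gamma$ as defined in \eqref{eq:def-gamma} really equals the separation $2(\pi-\beta)$ rather than $2\beta$. I would settle this by reusing the explicit formula $\varphi(t)=\pi/2-2\arctan(k\cos t)$ from Case~1 of Claim~\ref{claim:C2-structure}, specialised to a single point $\boldsymbol{x}$, which gives the pair at $\pi/2\pm\gamma$ around the antipode directly. The arc arithmetic and the non-wrapping condition are routine.
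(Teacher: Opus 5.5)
Your proposal is correct and is essentially the paper's proof. Both argue by induction on $m$: each $\boldsymbol{x}\in\mathcal{D}$ contributes the two points at angular distance $\gamma/2$ on either side of $\boldsymbol{x}$, or of its antipode when $b>\sqrt2 r$, and redness comes from Lemma~\ref{lem:propagation-main3}(ii). Your additions are helpful: the explicit check that $\gamma$ equals $2\beta$ or $2(\pi-\beta)$ for your angular radius $\beta$, and the non-wrapping condition $(m+1)\gamma<2\pi$, both of which the paper leaves implicit.

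One typo: the inner endpoints in your display for $\mathcal{C}_{m+1}$ are swapped. It should read $\sigma\bigl([c-(m+1)\gamma/2,\,c+(m-1)\gamma/2]\cup[c-(m-1)\gamma/2,\,c+(m+1)\gamma/2]\bigr)$, that is, two overlapping arcs of length $m\gamma$, exactly as your prose says.
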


\begin{poc}
We already established the case $m=2$. Assume that $\mathcal{C}_{m}$ is a red arc of angular length $m\gamma$.
Parameterize $\mathcal{D}$ by angular coordinate $s$ modulo $2\pi$, so that $\mathcal{C}_{m}$ corresponds to a closed interval
\(
I=[u,v]
\)
of length $m\gamma$.

If $b<\sqrt{2}r$, then $\boldsymbol{x}$ lies on the shorter arc determined by the two intersection points, and each intersection point is at angular distance $\gamma/2$ from $\boldsymbol{x}$. Then for each point $s\in I$, the set
\(
\mathcal{P}_{b}(\boldsymbol{x}(s))\cap \mathcal{D}
\)
consists of the two points with angular coordinates $s-\gamma/2$ and $s+\gamma/2$.
Hence
\[
\mathcal{C}_{m+1}
=
[u-\gamma/2,\ v+\gamma/2],
\]
which is again an arc, now of angular length $(m+1)\gamma$.

If $b>\sqrt{2}r$, then $\boldsymbol{x}$ lies on the longer arc, and each intersection point is at angular distance $\pi-\gamma/2$ from $\boldsymbol{x}$. Then for each point $s\in I$, the set
\(
\mathcal{P}_{b}(\boldsymbol{x}(s))\cap \mathcal{D}
\)
consists of the two points with angular coordinates $s-(\pi-\gamma/2)=s-\pi+\gamma/2=s+\pi+\gamma/2$ and $s+(\pi-\gamma/2)=s+\pi-\gamma/2$.
Hence
\[
\mathcal{C}_{m+1}
=
[u+\pi-\gamma/2,\ v+\pi+\gamma/2],
\]
which is again an arc, now of angular length $(m+1)\gamma$.

Since every point produced in this way lies on some $\mathcal{P}_{b}(\boldsymbol{x})$ with $\boldsymbol{x}$ red, Lemma~\ref{lem:propagation-main3} shows that $\mathcal{C}_{m+1}$ is red.
\end{poc}

Choose
\(
m:=\left\lceil\frac{\theta}{\gamma}\right\rceil.
\)
Then $m\gamma\ge \theta$, so by Claim~\ref{claim:arc-growth-main3}, the red arc $\mathcal{C}_{m}$ contains two points at angular separation exactly $\theta$.
These two points are at Euclidean distance $a$, contradicting \textnormal{(H1)}. This contradiction completes the proof of Theorem~\ref{thm:main3}.

\qed

\section{Proof of Theorem~\ref{thm:main1} and Theorem~\ref{thm:main5}}

\subsection{Proof of Theorem~\ref{thm:main1}(2)}

We prove that
\(
\mathbb{S}^{3}\!\left(1/\sqrt{2}\right)\rightarrow (R;R).
\)
Let
\(
\mathbb{S}^{3}:=\mathbb{S}^{3}\!\left(1/\sqrt{2}\right)\subseteq \mathbb{R}^{4},
\)
and let
\[
\mathbb{S}_{\mathrm{eq}}
:=
\{(x,y,z,w)\in \mathbb{S}^{3}:w=0\}
=
\{(x,y,z,0):x^{2}+y^{2}+z^{2}=1/2\}
\]
be the equatorial \(2\)-sphere. Then \(\mathbb{S}_{\mathrm{eq}}\) is isometric to
\(\mathbb{S}^{2}(1/\sqrt2)\).

Fix an arbitrary red-blue coloring of \(\mathbb{S}^{3}\). We distinguish two cases according to the coloring of antipodal pairs.

First suppose that there exist antipodal points of different colors. After a rotation, we may assume that
\(
\boldsymbol{b}=(0,0,0,1/\sqrt2)\) is colored red and
\(
\boldsymbol{a}=(0,0,0,-1/\sqrt2)\) is colored blue.
Apply Lemma~\ref{lem:monoline} to the induced coloring on \(\mathbb{S}_{\mathrm{eq}}\). Then there exists a monochromatic pair
\(
\{\boldsymbol{p}_{1},\boldsymbol{p}_{2}\}\subseteq \mathbb{S}_{\mathrm{eq}}
\)
with
\(
\|\boldsymbol{p}_{1}-\boldsymbol{p}_{2}\|=1.
\)
Since every point of \(\mathbb{S}_{\mathrm{eq}}\) has fourth coordinate \(0\), we have
\[
\|\boldsymbol{b}-\boldsymbol{p}_{i}\|^{2}
=
\|\boldsymbol{b}\|^{2}+\|\boldsymbol{p}_{i}\|^{2}
=
\frac12+\frac12
=
1,
\]
and similarly \(\|\boldsymbol{a}-\boldsymbol{p}_{i}\|=1\) for \(i=1,2\).
Hence:
if \(\boldsymbol{p}_{1},\boldsymbol{p}_{2}\) are red, then
  \(\{\boldsymbol{b},\boldsymbol{p}_{1},\boldsymbol{p}_{2}\}\) is a red unit equilateral triangle;
 if \(\boldsymbol{p}_{1},\boldsymbol{p}_{2}\) are blue, then
  \(\{\boldsymbol{a},\boldsymbol{p}_{1},\boldsymbol{p}_{2}\}\) is a blue unit equilateral triangle. Thus the first case is settled.

Now suppose that every antipodal pair on \(\mathbb{S}^{3}\) has the same color. If every point is blue, then any inscribed unit equilateral triangle is blue and we are done. Otherwise there exists a red point, and hence its antipode is also red. After a rotation, we may assume that
\(
\boldsymbol{b}=(0,0,0,1/\sqrt2)\) and
\(
\boldsymbol{a}=(0,0,0,-1/\sqrt2)
\)
are both colored red.

Apply Corollary~\ref{cor:2sphere-ramsey} to the induced coloring on \(\mathbb{S}_{\mathrm{eq}}\). Then either:
\begin{itemize}
    \item there exists a red unit pair
    \(\{\boldsymbol{p}_{1},\boldsymbol{p}_{2}\}\subseteq \mathbb{S}_{\mathrm{eq}}\), or
    \item there exists a blue unit equilateral triangle
    \(\{\boldsymbol{q}_{1},\boldsymbol{q}_{2},\boldsymbol{q}_{3}\}\subseteq \mathbb{S}_{\mathrm{eq}}\).
\end{itemize}
In the second case we are done immediately. In the first case,
\(\{\boldsymbol{a},\boldsymbol{p}_{1},\boldsymbol{p}_{2}\}\) is a red unit equilateral triangle, since \(\|\boldsymbol{a}-\boldsymbol{p}_{i}\|=1\) for \(i=1,2\). This finishes the proof.

\qed

\subsection{Proof of Theorem~\ref{thm:main1}(1)}\label{sec:counterexample-S2}

In this subsection we construct an explicit \(2\)-coloring of
\(
\mathbb{S}:=\mathbb{S}^{2}\!\left(1/\sqrt{2}\right)\subseteq \mathbb{R}^{3}
\)
with no monochromatic unit equilateral triangle. 

For \(\boldsymbol{u},\boldsymbol{v}\in \mathbb{S}\), we have
\(
\|\boldsymbol{u}\|^{2}=\|\boldsymbol{v}\|^{2}=\frac12.
\)
Hence
\(
\|\boldsymbol{u}-\boldsymbol{v}\|^{2}
=
\|\boldsymbol{u}\|^{2}+\|\boldsymbol{v}\|^{2}-2\boldsymbol{u}\cdot \boldsymbol{v}
=
1-2\boldsymbol{u}\cdot \boldsymbol{v}.
\)
Therefore
\[
\|\boldsymbol{u}-\boldsymbol{v}\|=1
\iff
\boldsymbol{u}\cdot \boldsymbol{v}=0.
\]
In particular, a unit equilateral triangle on \(\mathbb{S}\) is exactly a triple
\(
\{\boldsymbol{d}_{1},\boldsymbol{d}_{2},\boldsymbol{d}_{3}\}\subseteq \mathbb{S}
\)
such that
\begin{equation}\label{eq:pairwise-orth-main1}
\boldsymbol{d}_{i}\cdot \boldsymbol{d}_{j}=0
\qquad\text{for all } i\neq j.
\end{equation}
Let
\[
\mathcal{C}:=\{(x,y,z)\in \mathbb{S}:z=0\},
\qquad
\mathcal{D}:=\{(x,y,z)\in \mathbb{S}:y=0\}
\]
be the two coordinate great circles, and let
\[
\boldsymbol{e}:=\left(\frac1{\sqrt2},0,0\right),
\qquad
\boldsymbol{f}:=\left(-\frac1{\sqrt2},0,0\right).
\]
Then \(\mathcal{C}\cap \mathcal{D}=\{\boldsymbol{e},\boldsymbol{f}\}\).
Define a coloring \(\chi:\mathbb{S}\to\{\mathrm{blue},\mathrm{red}\}\) by
\begin{equation}\label{eq:coloring-rule-main1}
\chi(x,y,z)=
\begin{cases}
\mathrm{blue}, & yz>0,\\
\mathrm{red},  & yz<0,\\
\mathrm{blue}, & z=0 \text{ and } (x,y,z)\neq \boldsymbol{f},\\
\mathrm{red},  & y=0 \text{ and } (x,y,z)\neq \boldsymbol{e}.
\end{cases}
\end{equation}
Then
\(
\mathcal{C}\setminus\{\boldsymbol{f}\}\) is colored blue,
and \(
\mathcal{D}\setminus\{\boldsymbol{e}\}\) is colored red, 
while the two open regions \(yz>0\) are blue and the two open regions \(yz<0\) are red. In particular,
\(
\chi(\boldsymbol{e})=\mathrm{blue},
\chi(\boldsymbol{f})=\mathrm{red}.
\)

Assume for contradiction that
\(
\{\boldsymbol{d}_{1},\boldsymbol{d}_{2},\boldsymbol{d}_{3}\}\subseteq \mathbb{S}
\)
is a monochromatic unit equilateral triangle. Write
\(
\boldsymbol{d}_{i}=(x_{i},y_{i},z_{i}).
\)
By \eqref{eq:pairwise-orth-main1}, the vectors \(\boldsymbol{d}_{1},\boldsymbol{d}_{2},\boldsymbol{d}_{3}\) are pairwise orthogonal. Since they are nonzero, they form an orthogonal basis of \(\mathbb{R}^{3}\). Therefore, for any \(\boldsymbol{u},\boldsymbol{v}\in \mathbb{R}^{3}\),
\[
\boldsymbol{u}\cdot \boldsymbol{v}
=
\sum_{i=1}^{3}
\frac{(\boldsymbol{u}\cdot \boldsymbol{d}_{i})(\boldsymbol{v}\cdot \boldsymbol{d}_{i})}
{\|\boldsymbol{d}_{i}\|^{2}}.
\]
Apply this with \(\boldsymbol{u}=(0,1,0)\) and \(\boldsymbol{v}=(0,0,1)\). Since \(\|\boldsymbol{d}_{i}\|^{2}=1/2\), we obtain
\[
0=\boldsymbol{u}\cdot \boldsymbol{v}
=\sum_{i=1}^{3}\frac{y_{i}z_{i}}{\|\boldsymbol{d}_{i}\|^{2}}
=2\sum_{i=1}^{3} y_{i}z_{i}.
\]
Hence
\begin{equation}\label{eq:yz-sum-zero-main1}
y_{1}z_{1}+y_{2}z_{2}+y_{3}z_{3}=0.
\end{equation}

We now show that the triple cannot be monochromatic.
\begin{claim}\label{claim:no-mono-triangle-main1}
The triple \(\{\boldsymbol{d}_{1},\boldsymbol{d}_{2},\boldsymbol{d}_{3}\}\) is not monochromatic under \(\chi\).
\end{claim}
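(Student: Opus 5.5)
The plan is a sign argument built on identity~\eqref{eq:yz-sum-zero-main1}, followed by a dimension count. First I read off from the coloring rule~\eqref{eq:coloring-rule-main1} the sign of $yz$ at each color.

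\textbf{Blue points.} A point is blue exactly when $yz>0$, or $z=0$ with the point $\neq\boldsymbol{f}$. The point $\boldsymbol{e}$ is included here, since it lies on $\mathcal{C}$. So every blue point satisfies $yz\ge 0$. Moreover, a blue point with $yz=0$ has $z=0$.

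\textbf{Red points.} A point is red exactly when $yz<0$, or $y=0$ with the point $\neq\boldsymbol{e}$. The point $\boldsymbol{f}$ is included here, since it lies on $\mathcal{D}$. So every red point satisfies $yz\le 0$. Moreover, a red point with $yz=0$ has $y=0$.

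The only overlap between the two boundary rules is $\mathcal{C}\cap\mathcal{D}=\{\boldsymbol{e},\boldsymbol{f}\}$. The exceptions in~\eqref{eq:coloring-rule-main1} resolve it consistently, so $\chi$ is well defined and the descriptions above are exact.

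Suppose first that $\boldsymbol{d}_1,\boldsymbol{d}_2,\boldsymbol{d}_3$ are all blue. Each term $y_iz_i$ is then nonnegative, and by~\eqref{eq:yz-sum-zero-main1} they sum to $0$. Hence $y_iz_i=0$ for every $i$, and by the description of blue points, $z_i=0$ for all $i$. Then $\boldsymbol{d}_1,\boldsymbol{d}_2,\boldsymbol{d}_3$ are three nonzero, pairwise orthogonal vectors, by~\eqref{eq:pairwise-orth-main1}, lying in the $2$-dimensional plane $z=0$. This is impossible, since such vectors are linearly independent.

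The all-red case is symmetric. Each $y_iz_i\le 0$ with sum $0$, so every $y_iz_i=0$. Hence $y_i=0$ for all $i$, and three pairwise orthogonal nonzero vectors would lie in the plane $y=0$, again a contradiction.

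This proves the claim, and with it Theorem~\ref{thm:main1}(1). The only delicate step is the bookkeeping at the two special points $\boldsymbol{e}$ and $\boldsymbol{f}$. One must check that the vanishing of $yz$ forces $z=0$ for blue points and $y=0$ for red points, including at these two points. This works precisely because $\boldsymbol{e}\in\mathcal{C}$ was made blue and $\boldsymbol{f}\in\mathcal{D}$ was made red.
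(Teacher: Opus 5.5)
Your proof is correct and follows the paper's argument: the coloring forces $y_iz_i\ge 0$ for blue points and $y_iz_i\le 0$ for red points, so \eqref{eq:yz-sum-zero-main1} gives $y_iz_i=0$ for every $i$. The conventions at $\boldsymbol{e}$ and $\boldsymbol{f}$ then put all three points in the plane $z=0$ (blue case) or $y=0$ (red case), which contradicts pairwise orthogonality. Your explicit check that $\chi$ is well defined on $\mathcal{C}\cap\mathcal{D}=\{\boldsymbol{e},\boldsymbol{f}\}$ is a small addition the paper leaves implicit.
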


\begin{poc}
First suppose that all three points are blue. Then by \eqref{eq:coloring-rule-main1}, each \(\boldsymbol{d}_{i}\) satisfies
\(
y_{i}z_{i}\ge 0.
\)
Together with \eqref{eq:yz-sum-zero-main1}, this implies
\(
y_{i}z_{i}=0\)
for \(i=1,2,3.
\)
Now a blue point with \(y_{i}z_{i}=0\) must in fact satisfy \(z_{i}=0\): indeed, points with \(y=0\) are red unless the point is \(\boldsymbol e\), while points on \(z=0\) are blue except for \(\boldsymbol f\). Hence every \(\boldsymbol{d}_{i}\) lies on the plane \(z=0\). But then the three nonzero vectors \(\boldsymbol{d}_{1},\boldsymbol{d}_{2},\boldsymbol{d}_{3}\) all lie in the \(2\)-dimensional subspace \(\{z=0\}\), so they cannot be pairwise orthogonal. This is impossible.

Now suppose that all three points are red. Then by \eqref{eq:coloring-rule-main1}, each \(\boldsymbol{d}_{i}\) satisfies
\(
y_{i}z_{i}\le 0.
\)
Again \eqref{eq:yz-sum-zero-main1} implies
\(
y_{i}z_{i}=0\)
for \(i=1,2,3.\)
A red point with \(y_{i}z_{i}=0\) must satisfy \(y_{i}=0\): indeed, points on \(z=0\) are blue except for \(\boldsymbol f\), whereas points on \(y=0\) are red except for \(\boldsymbol e\). Therefore every \(\boldsymbol{d}_{i}\) lies on the plane \(y=0\), again impossible for three nonzero pairwise orthogonal vectors. This finishes the proof.
\end{poc}

Claim~\ref{claim:no-mono-triangle-main1} contradicts our assumption. This proves
\(
\mathbb{S}^{2}\!\left(1/\sqrt{2}\right)\nrightarrow (R;R).
\)

\qed

\subsection{Proof of Theorem~\ref{thm:main5}}

Assume for contradiction that there exists a red--blue coloring of
\(
\mathbb{S}:=\mathbb{S}^{2}\!\left(1/\sqrt{2}\right)
\)
such that
\begin{itemize}
    \item[\textnormal{(H1)}] there is no red copy of \(I_{\sqrt2}\), that is, no red isosceles triangle with side lengths \(\sqrt{2},1,1\);
    \item[\textnormal{(H2)}] there is no blue unit equilateral triangle.
\end{itemize}

We distinguish two cases.

\paragraph{Case 1:} some antipodal pair has different colors.

After a rotation, we may assume that
\(
\boldsymbol{b}=(0,0,1/\sqrt2)\ \text{is red},
\boldsymbol{a}=(0,0,-1/\sqrt2)\ \text{is blue}.
\)
Let
\[
\mathcal{C}:=\{\boldsymbol{x}\in \mathbb{S}:\|\boldsymbol{x}-\boldsymbol{b}\|=1\}
=
\{(x,y,z)\in \mathbb{R}^{3}:x^{2}+y^{2}=1/2,\ z=0\}
\]
be the equatorial circle.

\begin{claim}\label{claim:blue-unit-pair-on-C-main5}
The circle \(\mathcal{C}\) contains two blue points at unit distance.
\end{claim}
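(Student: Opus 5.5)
The plan is a direct argument by contradiction using only the red point $\boldsymbol{b}$ and hypothesis (H1); the blue point $\boldsymbol{a}$ is not needed for this claim. The key observation is that on $\mathbb{S}=\mathbb{S}^{2}(1/\sqrt2)$ a copy of $I_{\sqrt2}$ is exactly an antipodal pair $\{\boldsymbol{p},-\boldsymbol{p}\}$ together with a third point orthogonal to $\boldsymbol{p}$. Indeed, $\sqrt2$ is the diameter, and unit distance is equivalent to orthogonality, as in \eqref{eq:unit-orth-main3}. In particular, for any antipodal pair $\{\boldsymbol{p},-\boldsymbol{p}\}\subseteq\mathcal{C}$, the triple $\{\boldsymbol{p},-\boldsymbol{p},\boldsymbol{b}\}$ is a copy of $I_{\sqrt2}$. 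This holds because $\boldsymbol{b}$ is orthogonal to the whole equatorial plane $z=0$. Consequently, by (H1), no antipodal pair on $\mathcal{C}$ can be entirely red.

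Assume, for contradiction, that $\mathcal{C}$ contains no two blue points at unit distance, i.e.\ no two blue orthogonal points. I would work with an inscribed square $\boldsymbol{x}_0,\boldsymbol{x}_1,\boldsymbol{x}_2,\boldsymbol{x}_3$ on $\mathcal{C}$, obtained by successive rotations by $\pi/2$ about the $z$-axis. Cyclically adjacent vertices are orthogonal, hence at unit distance. Opposite vertices satisfy $\boldsymbol{x}_2=-\boldsymbol{x}_0$ and $\boldsymbol{x}_3=-\boldsymbol{x}_1$.

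First, $\mathcal{C}$ must contain a blue point. Otherwise any antipodal pair on $\mathcal{C}$ is red and, together with $\boldsymbol{b}$, forms a red $I_{\sqrt2}$, contradicting (H1). So choose the square with $\boldsymbol{x}_0$ blue. Both $\boldsymbol{x}_1$ and $\boldsymbol{x}_3$ are orthogonal to $\boldsymbol{x}_0$, so by our assumption both are red. But $\boldsymbol{x}_3=-\boldsymbol{x}_1$, so $\{\boldsymbol{x}_1,\boldsymbol{x}_3,\boldsymbol{b}\}$ is a red copy of $I_{\sqrt2}$, again contradicting (H1). Hence $\mathcal{C}$ contains two blue points at unit distance.

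There is essentially no obstacle here. The only point to verify carefully is the geometric identification of $I_{\sqrt2}$ on this sphere: $\|\boldsymbol{p}-(-\boldsymbol{p})\|=\sqrt2$, and $\|\boldsymbol{p}-\boldsymbol{b}\|=\|-\boldsymbol{p}-\boldsymbol{b}\|=1$ because $\boldsymbol{p}\cdot\boldsymbol{b}=0$ and all points have norm $1/\sqrt2$.
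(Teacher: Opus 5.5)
Your proof is correct and is essentially the paper's own argument. You first get a blue point on $\mathcal{C}$, since an all-red $\mathcal{C}$ would give a red antipodal pair forming a red $I_{\sqrt2}$ with $\boldsymbol{b}$; its two unit-distance neighbours on $\mathcal{C}$ are antipodal, so they cannot both be red, and the inscribed square is just a relabelling of the paper's rotation by $\pm\pi/2$.
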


\begin{poc}
Suppose not. Since \(\mathcal{C}\) is nonempty, choose a blue point \(\boldsymbol{x}\in \mathcal{C}\). Such a point exists because otherwise \(\mathcal{C}\) would be all red, and then any antipodal pair on \(\mathcal{C}\) together with \(\boldsymbol b\) would form a red copy of \(I_{\sqrt2}\), contradicting \textnormal{(H1)}.

On the circle \(\mathcal{C}\), the two points at unit distance from \(\boldsymbol{x}\) are exactly the two points obtained by rotating \(\boldsymbol{x}\) by angles \(\pm \pi/2\); these two points are antipodal to each other. Since we are assuming that \(\mathcal{C}\) contains no blue unit-distance pair, both of these points must be red. But then they form a red antipodal pair on \(\mathcal{C}\), and together with \(\boldsymbol b\) they form a red copy of \(I_{\sqrt2}\), again contradicting \textnormal{(H1)}.
\end{poc}

By Claim~\ref{claim:blue-unit-pair-on-C-main5}, there exist blue points
\(\boldsymbol p,\boldsymbol q\in \mathcal C\) with \(\|\boldsymbol p-\boldsymbol q\|=1\).
Since every point of \(\mathcal C\) is at unit distance from \(\boldsymbol a\), the triangle
\(
\{\boldsymbol a,\boldsymbol p,\boldsymbol q\}
\)
is a blue unit equilateral triangle, contradicting \textnormal{(H2)}.

\paragraph{Case 2:} every antipodal pair has the same color.

In this case, for every \(\boldsymbol p\in \mathbb S\), the antipode \(-\boldsymbol p\) has the same color as \(\boldsymbol p\).

\begin{claim}\label{claim:no-red-unit-pair-main5}
There is no red pair of points at unit distance.
\end{claim}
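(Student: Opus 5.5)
Within Case~2 every antipodal pair is monochromatic, and the plan is to turn a red unit pair into a red copy of \(I_{\sqrt2}\), contradicting \textnormal{(H1)}. So suppose for contradiction that \(\boldsymbol{p},\boldsymbol{q}\in\mathbb{S}\) are red with \(\|\boldsymbol{p}-\boldsymbol{q}\|=1\). On \(\mathbb{S}=\mathbb{S}^2(1/\sqrt2)\), unit distance means orthogonality, so \(\boldsymbol{p}\cdot\boldsymbol{q}=0\). By the Case~2 hypothesis, the antipode \(-\boldsymbol{q}\) is also red.

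The key step is to look at the triple \(\{\boldsymbol{q},\boldsymbol{p},-\boldsymbol{q}\}\) and check it is a copy of \(I_{\sqrt2}\). The distances are:
\[
\|\boldsymbol{q}-(-\boldsymbol{q})\|=2\|\boldsymbol{q}\|=\sqrt2,
\qquad
\|\boldsymbol{p}-\boldsymbol{q}\|=1,
\qquad
\|\boldsymbol{p}+\boldsymbol{q}\|^2=\tfrac12+\tfrac12+2\,\boldsymbol{p}\cdot\boldsymbol{q}=1.
\]
So the three points form an isosceles triangle with side lengths \(\sqrt2,1,1\), that is, a congruent copy of \(I_{\sqrt2}\). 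All three points are red, which contradicts \textnormal{(H1)}. Hence no red unit pair exists.

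There is no real obstacle here. The only point to verify is that the third side \(\|\boldsymbol{p}+\boldsymbol{q}\|\) equals \(1\), and this is exactly where the orthogonality \(\boldsymbol{p}\cdot\boldsymbol{q}=0\) is used.

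Presumably the paper then combines this claim with Corollary~\ref{cor:2sphere-ramsey}: since \(\mathbb{S}^2(1/\sqrt2)\rightarrow(T_1;R)\) and there is no red \(T_1\), a blue \(R\) must exist, contradicting \textnormal{(H2)}. That step is not needed for the claim itself.
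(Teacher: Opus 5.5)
Your proof is correct and is essentially the paper's argument: the paper adds the antipode $-\boldsymbol{p}$ instead of $-\boldsymbol{q}$ to get the red copy $\{\boldsymbol{p},\boldsymbol{q},-\boldsymbol{p}\}$ of $I_{\sqrt2}$, which is the same construction up to relabeling. Your explicit use of $\boldsymbol{p}\cdot\boldsymbol{q}=0$ to show the third side has length $1$ justifies a step the paper only states.
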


\begin{poc}
Suppose that \(\boldsymbol p,\boldsymbol q\in \mathbb S\) are red and satisfy
\(\|\boldsymbol p-\boldsymbol q\|=1\).
Then \(-\boldsymbol p\) is also red, and
\[
\|\boldsymbol p-(-\boldsymbol p)\|=\sqrt2,
\qquad
\|\boldsymbol q-(-\boldsymbol p)\|=1.
\]
Hence \(\{\boldsymbol p,\boldsymbol q,-\boldsymbol p\}\) is a red copy of \(I_{\sqrt2}\), contradicting \textnormal{(H1)}.
\end{poc}

By Claim~\ref{claim:no-red-unit-pair-main5}, the coloring contains no red unit-distance pair. Applying Corollary~\ref{cor:2sphere-ramsey} to \(\mathbb S\), we conclude that \(\mathbb S\) must contain a blue unit equilateral triangle, contradicting \textnormal{(H2)}. 

Both cases lead to contradictions, finishing the proof. 

\qed

\section{Proof of Theorem~\ref{thm:main2}}
\subsection{Auxiliary theorem and its proof}

First, we establish a Ramsey-type result for isosceles triangles on the 2-sphere $\mathbb{S}^2(1/\sqrt{2})$. The following auxiliary theorem serves as a key ingredient in the proof of Theorem~\ref{thm:main2}.

\begin{theorem}\label{thm:2sphere-isosceles}
   
    For every $0<a<\sqrt{2}$,
    \[\mathbb{S}^{2}(1/\sqrt{2})\rightarrow (T_a;I_a),\]
    where $T_a$ denotes a pair of points at distance $a$ and $I_a$ denotes the isosceles triangle with side lengths $a,1,1$.
\end{theorem}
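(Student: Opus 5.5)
We argue by contradiction, as in the proof of Theorem~\ref{thm:main3}. Suppose a red--blue coloring of $\mathbb{S}:=\mathbb{S}^2(1/\sqrt2)$ has (H1) no red pair at distance $a$ and (H2) no blue copy of $I_a$. On $\mathbb{S}$, distance $1$ means orthogonality. Hence a copy of $I_a$ is exactly a pair $\{\boldsymbol u,\boldsymbol v\}$ with $\|\boldsymbol u-\boldsymbol v\|=a$, together with one of the two poles $\pm\boldsymbol w$ of the great circle through $\boldsymbol u,\boldsymbol v$. Here $\boldsymbol w=\frac{1}{\sqrt2}\frac{\boldsymbol u\times\boldsymbol v}{\|\boldsymbol u\times\boldsymbol v\|}$, which is well defined because $0<a<\sqrt2$ makes $\boldsymbol u,\boldsymbol v$ neither equal nor antipodal. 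So (H2) reads: \emph{for every blue $a$-pair, both poles of its great circle are red.}

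The case $a=1$ is immediate. Then $I_1=R$, and Corollary~\ref{cor:2sphere-ramsey} gives either a red unit pair or a blue copy of $R=I_1$, so we assume $a\neq1$ from now on. Let $\theta$ be the central angle of a chord of length $a$, so $0<\theta<\pi$ and $\theta\neq\pi/2$.

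The propagation lemma is the analogue of Lemma~\ref{lem:propagation-main3}. If $\boldsymbol x$ is red, then by (H1) the circle $\mathcal{P}_a(\boldsymbol x)$ is entirely blue. Any two points of $\mathcal{P}_a(\boldsymbol x)$ at mutual distance $a$ form a blue $a$-pair, so both poles of their great circle are red. By rotational symmetry about the axis through $\boldsymbol x$, as such a pair rotates around $\boldsymbol x$ the poles sweep two circles centred on the axis $\pm\boldsymbol x$. Combining them, we get a red circle $\mathcal{Q}(\boldsymbol x)$ of points at some fixed angular distance $\beta=\beta(\theta)$ from $\boldsymbol x$.

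To compute $\beta$, write the pair as $\boldsymbol u,\boldsymbol v$ at polar angle $\theta$ from $\boldsymbol x$ and azimuthal separation $\phi$, with $\phi$ chosen so that the angle between $\boldsymbol u$ and $\boldsymbol v$ is also $\theta$. Then $\beta$ is the angle between $\boldsymbol x$ and $\boldsymbol u\times\boldsymbol v$. It satisfies $\cos\beta=\sin^2\theta\sin\phi/\sin\theta$, and a short computation should give a closed form for $\tan\beta$ in terms of $\theta$. The degenerate situations are $\beta\in\{0,\pi\}$ and $\beta=\pi/2$. The first would require the pair to lie on the great circle $\mathcal{P}(\boldsymbol x)$, which happens only when $\theta=\pi/2$, i.e.\ $a=1$, already excluded. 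For $\beta=\pi/2$, $\mathcal{Q}(\boldsymbol x)$ is the full great circle $\mathcal{P}(\boldsymbol x)$, which is red; since $a<\sqrt2$, it then directly contains a red $a$-pair.

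Next comes the growth step, copied from Claims~\ref{claim:C2-structure} and~\ref{claim:arc-growth-main3}. A red point exists, since otherwise two points at distance $a$ together with a pole of their great circle give a blue $I_a$. Rotate it to $\boldsymbol d=(0,0,r)$ and let $\mathcal{D}$ be the great circle $y=0$. For each point of $\mathcal{D}$, the circle $\mathcal{Q}(\cdot)$ meets $\mathcal{D}$ in two points at angular distance $\beta$ on either side. Applying this once to $\boldsymbol d$ and then to every red point of $\mathcal{D}$ produced so far, the red set on $\mathcal{D}$ becomes an arc. For $\beta<\pi/2$ it grows by $\beta$ at each step. For $\beta>\pi/2$ we use the reflected version with $\pi-\beta$, as in the two cases of Claim~\ref{claim:arc-growth-main3}. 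After $\lceil\theta/\min(\beta,\pi-\beta)\rceil$ steps, one obtains two red points at angular separation exactly $\theta$, i.e.\ a red $a$-pair, contradicting (H1). The first step is justified because $\mathcal{Q}(\boldsymbol d)$ is a full red circle whose intersection with $\mathcal{D}$, after sweeping over all its points, gives an arc of length $2\beta$, or $2(\pi-\beta)$ in the reflected case.

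The main obstacle is the explicit geometry of $\beta(\theta)$ and the verification that the arc-growth bookkeeping works uniformly in $a\in(0,\sqrt2)\setminus\{1\}$. In particular, one must check that the intersections of the circles $\mathcal{Q}(\cdot)$ with $\mathcal{D}$ really fill an interval and do not leave gaps. I would handle this exactly as in Claim~\ref{claim:C2-structure}: parametrize the red circle, solve the trigonometric incidence equation with $\mathcal{D}$, and read off the range by continuity. If the uniform computation becomes unwieldy, a fallback is Lemma~\ref{lem:monoline}. It gives a monochromatic $a$-pair, necessarily blue, and hence a red pole from which to start the propagation.
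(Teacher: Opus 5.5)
There is a genuine gap. For $a\le\sqrt{3/2}$ your argument is essentially the paper's first-range proof: the pole of the great circle through a blue $a$-pair on $\mathcal{P}_a(\boldsymbol x)$ is exactly the tip of the paper's asymmetric diamond, and the arc growth is the same. The gap is the range $a\in(\sqrt{3/2},\sqrt2)$, which you treat uniformly with the rest.

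Your propagation step needs two points of $\mathcal{P}_a(\boldsymbol x)$ at mutual distance $a$, that is, an equilateral triangle of side $a$ inscribed in $\mathbb{S}^2(1/\sqrt2)$. But $\mathcal{P}_a(\boldsymbol x)$ has Euclidean diameter $2r\sin\theta$. The inequality $2r\sin\theta\ge 2r\sin(\theta/2)=a$ holds if and only if $\theta\le 2\pi/3$, i.e.\ if and only if $a\le\sqrt{3/2}$. Your own parametrization exposes this. Requiring the angle between $\boldsymbol u$ and $\boldsymbol v$ to be $\theta$ forces $\cos\phi=\cos\theta/(1+\cos\theta)$, and this is $<-1$ as soon as $\cos\theta=1-a^2<-1/2$. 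So for these $a$ the set $\mathcal{Q}(\boldsymbol x)$ is empty and no red point propagates. The growth step never starts. The fallback via Lemma~\ref{lem:monoline} only supplies a red starting point, which was never the problem.

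This range needs a different mechanism, and the paper supplies a topological separation argument. Because no blue $a$-pair can be produced inside a single circle $\mathcal{P}_a(\boldsymbol x)$, the paper works instead on great circles $\mathcal{P}_1(\boldsymbol x)$ with blue centre $\boldsymbol x$. There (H1) and (H2) force points at distance $a$ to have opposite colors, hence points at central angle $2(\pi-\theta)$ to have the same color.

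\begin{itemize}
\item Take $\boldsymbol b$ red and $\boldsymbol d\in\mathcal{P}_a(\boldsymbol b)$, which is blue. The same-color rule yields red points $\boldsymbol e_1,\dots,\boldsymbol e_n$ spaced by $2(\pi-\theta)$ all the way around $\mathcal{P}_1(\boldsymbol d)$.
\item The blue circles $\mathcal{P}_a(\boldsymbol e_i)$ then form a barrier. Any path joining two points of $\mathcal{P}_1(\boldsymbol d)$ at central angle $2(\pi-\theta)$ must cross it (Lemma~\ref{lem:separation-property}).
\item A continuous red path joining two such points is built in Lemma~\ref{lem:red-path}. Move $\boldsymbol h$ along $\mathcal{P}_a(\boldsymbol b)$, intersect the great circle $\mathcal{P}_1(\boldsymbol h)$ with the blue circle $\mathcal{P}_a(\boldsymbol e_1)$, and shift the two blue intersection points a distance $a$ along $\mathcal{P}_1(\boldsymbol h)$, stopping at the first tangency.
\end{itemize}

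The red path crossing the blue barrier gives the contradiction. An argument of this kind, or some other genuinely new idea, is needed to finish your proof for $a\in(\sqrt{3/2},\sqrt2)$.
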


\begin{proof}
We argue by contradiction. Suppose that there exists a red-blue coloring of
\(
\mathbb{S}:=\mathbb{S}^{2}(1/\sqrt{2})\subseteq \mathbb{R}^{3}\)
such that
\begin{itemize}
    \item[\textnormal{(H1)}] there is no red pair at distance $a$;
    \item[\textnormal{(H2)}] there is no blue isosceles triangle with base $a$ and legs of length $1$.
\end{itemize}
Since $r=1/\sqrt{2}$, an equilateral triangle of side length $a$ can be embedded in $\mathbb{S}$ if and only if $a\in(0,\sqrt{3/2}]$. Therefore, the argument splits into two parameter ranges: $a\in(0,\sqrt{3/2}]$ and $a\in(\sqrt{3/2},\sqrt{2})$. 
Let
\(
\theta\in (0,\pi)
\)
denote the central angle subtended by a chord of length $a$ on $\mathbb{S}$, so that
\(
a=2r\sin \frac{\theta}{2}.
\)
When $a\le \sqrt{3/2}$, we have $\theta\le 2\pi/3$. When $\sqrt{3/2}<a<\sqrt{2}$, we have $2\pi/3<\theta<\pi$.
For a point $\boldsymbol{x}\in \mathbb{S}$ and $0\le d\le2r$, we write
\begin{equation}\label{eq:def-P_d-main2}
  \mathcal{P}_d(\boldsymbol{x}) := \{\boldsymbol{y}\in \mathbb{S}: \|\boldsymbol{x}-\boldsymbol{y}\| = d\}  
\end{equation}
for the circle of points on $\mathbb{S}$ at distance $d$ from $\boldsymbol{x}$. 

\paragraph{First range: $a\in(0,\sqrt{3/2}]$.}
In this range, the basic geometric configuration is an \emph{asymmetric diamond}: one equilateral triangle of side length $a$ and one isosceles triangle with side lengths $a,1,1$ sharing a common base edge. Since $a\le \sqrt{3/2}$, an equilateral triangle of side length $a$ can be embedded in $\mathbb{S}$. Therefore, such a diamond can be folded about the common base edge so that all four vertices lie on $\mathbb{S}$. 
Let $\boldsymbol{v}$ and $\boldsymbol{v}'$ be the tip vertices of the equilateral triangle and the isosceles triangle, respectively. When the diamond is folded onto the sphere, there are two possible embeddings: the two tip vertices can lie on the same side of the common base, or on opposite sides. We choose the embedding that maximizes the distance $\|\boldsymbol{v}-\boldsymbol{v}'\|$, and denote this distance by
\[
    b := \|\boldsymbol{v}-\boldsymbol{v}'\|.
\]
In this range we only need $\mathcal{P}_a(\cdot)$ and $\mathcal{P}_b(\cdot)$.
Let $r_b$ denote the radius of $\mathcal{P}_b(\boldsymbol{x})$. 
A direct computation gives
\begin{equation}\label{eq:def-b'}
b = \sqrt{1 + a\sqrt{\frac{3-2a^{2}}{2-a^{2}}}}, \qquad r_b = \sqrt{\frac{(a^2-1)^2}{2-a^2}}.
\end{equation}
Moreover, since $a\in(0,\sqrt{3/2}]$, we obtain $r_b\neq0$ if and only if $a\neq 1$.  

\begin{lemma}\label{lem:propagation-main2}
Assume $a<\sqrt{3/2}$ and $a\neq1$, let $\boldsymbol{x}\in \mathbb{S}$ be red. Then
\begin{enumerate}
    \item[\textnormal{(i)}] every point of $\mathcal{P}_{a}(\boldsymbol{x})$ is blue;
    \item[\textnormal{(ii)}] every point of $\mathcal{P}_{b}(\boldsymbol{x})$ is red.
\end{enumerate}
\end{lemma}
\begin{proof}
Part \textnormal{(i)} is immediate from \textnormal{(H1)}. For part \textnormal{(ii)}, let $\boldsymbol{y}\in \mathcal{P}_{b}(\boldsymbol{x})$. By the definition of $b$, there exist points
$\boldsymbol{u},\boldsymbol{v}\in \mathcal{P}_{a}(\boldsymbol{x})$
such that
\(
\triangle \boldsymbol{x}\boldsymbol{u}\boldsymbol{v}\) and
\(
\triangle \boldsymbol{y}\boldsymbol{u}\boldsymbol{v}
\)
are an equilateral of side length $a$ and an isosceles triangle with side lengths $a,1,1$, respectively. By part \textnormal{(i)}, the points $\boldsymbol{u}$ and $\boldsymbol{v}$ are blue. If $\boldsymbol{y}$ were also blue, then $\{\boldsymbol{y},\boldsymbol{u},\boldsymbol{v}\}$ would be a blue isosceles triangle with side lengths $a,1,1$, contradicting \textnormal{(H2)}. Hence $\boldsymbol{y}$ is red.
\end{proof}

Assume now that
\(
0<a\le\sqrt{3/2}\) and
\(
a\neq 1,
\) since the case $a=1$ is already covered by Theorem~\ref{thm:main3}. We first note that there exists a red point on $\mathbb{S}$; indeed, if every point were blue, then any inscribed isosceles triangle of side lengths $a,1,1$ would be blue, contradicting \textnormal{(H2)}.
Fix such a red point and, after rotation, assume that it is
\(
\boldsymbol{d}:=(0,0,r).
\)
By Lemma~\ref{lem:propagation-main2}, the set
\(
\mathcal{R}_{1}:=\mathcal{P}_{b}(\boldsymbol{d})
\)
is a red circle.

Let $\mathcal{D}$ be the great circle obtained by intersecting $\mathbb{S}$ with the plane $y=0$. 
Define $\gamma$ as in \eqref{eq:def-gamma} replacing the old expression for $b$ with the one in \eqref{eq:def-b'}. The propagation of the red arc on $\mathcal{D}$ depends only on the following two geometric facts:
$\mathcal{R}_1$ is a red circle of radius $r_b$, and for any $\boldsymbol{x}\in\mathcal{D}$ the intersection
$\mathcal{P}_b(\boldsymbol{x})\cap\mathcal{D}$ consists of two points at central angle $\gamma$.
These facts are established exactly as in Subsection~\ref{subsec:2.3}, and the subsequent arc‑growing
procedure (Claims~\ref{claim:C2-structure} and \ref{claim:arc-growth-main3}) uses nothing else.
Hence the same proof applies verbatim, yielding the exact analogues:
\begin{claim}\label{claim:C2-structure-main2}
$\mathcal{C}_2$ defined analogously is a closed red arc on $\mathcal{D}$ of angular length $2\gamma$.
\end{claim}
\begin{claim}\label{claim:arc-growth-main2}
For every $m\ge 2$, the set $\mathcal{C}_{m}$ is a red arc on $\mathcal{D}$ of angular length $m\gamma$.
\end{claim}
As before, if $\gamma\ge\theta$ or $2\gamma\ge\theta$ we immediately reach a contradiction to \textnormal{(H1)}. Otherwise we choose $m:=\lceil\theta/\gamma\rceil$, so that $m\gamma\ge\theta$, and Claim~\ref{claim:arc-growth-main2} then yields two red points at distance $a$ on $\mathcal{D}$, contradicting \textnormal{(H1)}. This completes the proof for $a\in(0,\sqrt{3/2}]$.

\paragraph{Second range: $a\in(\sqrt{3/2},\sqrt{2})$.}
Now $\theta\in(2\pi/3,\pi)$.  Set $\beta:=\pi-\theta\in(0,\pi/3)$ and let $\boldsymbol{o}$ be the center of $\mathbb{S}$.
Recall the definition~\eqref{eq:def-P_d-main2} of $\mathcal{P}_d(\boldsymbol{x})$, in this range we only need $\mathcal{P}_a(\cdot)$ and the unit‑distance circle $\mathcal{P}_1(\cdot)$. By a rotation we may assume $\boldsymbol{b}:=(0,0,r)$ is red.
Condition \textnormal{(H1)} forces all of $\mathcal{P}_a(\boldsymbol{b})$ to be blue.

\begin{lemma}\label{lem:basic-color-prop}
Let $\boldsymbol{x}\in\mathbb{S}$ be a blue point.  Under assumptions \textnormal{(H1)} and \textnormal{(H2)}, the following hold:
\begin{enumerate}
    \item[\textup{(i)}] On $\mathcal{P}_1(\boldsymbol{x})$ any two points at distance $a$ have opposite colors.
    \item[\textup{(ii)}] On $\mathcal{P}_1(\boldsymbol{x})$ any two points that span a central angle $2\beta$ have the same color.
\end{enumerate}
\end{lemma}
\begin{proof}
Part (i) is immediate. Two red points at distance $a$ would violate \textnormal{(H1)}, while two blue points at distance $a$
together with the blue point $\boldsymbol{x}$ would form a blue isosceles triangle of side lengths $a,1,1$, contradicting \textnormal{(H2)}.

For (ii), take $\boldsymbol{p},\boldsymbol{q}\in\mathcal{P}_1(\boldsymbol{x})$ with $\angle \boldsymbol{p}\boldsymbol{o}\boldsymbol{q}=2\beta$.
The circle $\mathcal{P}_1(\boldsymbol{x})$ has two arcs between $\boldsymbol{p}$ and $\boldsymbol{q}$; because $2\beta<\pi$, one of them is longer.  We let $\boldsymbol{r}$ be the midpoint of that longer arc.
Then $\angle \boldsymbol{p}\boldsymbol{o}\boldsymbol{r}=\angle \boldsymbol{r}\boldsymbol{o}\boldsymbol{q}=\pi-\beta=\theta$, hence $\|\boldsymbol{p}-\boldsymbol{r}\|=\|\boldsymbol{q}-\boldsymbol{r}\|=a$.
By (i), $\{\boldsymbol{p},\boldsymbol{r}\}$ and $\{\boldsymbol{q},\boldsymbol{r}\}$ are oppositely colored, forcing $\boldsymbol{p}$ and $\boldsymbol{q}$ to share the same color.
\end{proof}

In the arguments that follow we will need orientations on the circles 
$\mathcal{P}_a(\boldsymbol{b})$ and $\mathcal{P}_1(\boldsymbol{h})$, 
which we fix as follows: we orient $\mathcal{P}_a(\boldsymbol{b})$ counter‑clockwise as seen from the positive
$z$‑axis, and for each $\boldsymbol{h}\in\mathcal{P}_a(\boldsymbol{b})$ we orient
$\mathcal{P}_1(\boldsymbol{h})$ in the same way as seen from $\boldsymbol{h}$.

\begin{lemma}\label{lem:red-sequence-construction}
Let $\boldsymbol{d}$ be the point in $\mathcal{P}_a(\boldsymbol{b})\cap\{(x,y,z):y=0\}$ with positive $x$‑coordinate; it is blue.
Consider its unit‑distance circle $\mathcal{C}:=\mathcal{P}_1(\boldsymbol{d})$, equipped with the orientation defined above.
Then on $\mathcal{C}$ one can construct $\lceil \pi/\beta\rceil$ distinct red points
$\boldsymbol{e}_1,\boldsymbol{e}_2,\dots,\boldsymbol{e}_{\lceil \pi/\beta\rceil}$ such that along $\mathcal{C}$
the central angle between consecutive points equals $2\beta$.
\end{lemma}
\begin{proof}
Let $\boldsymbol{e}$ be the point in $\mathcal{P}_1(\boldsymbol{d})\cap\{(x,y,z)\in\mathbb{R}^3:y=0\}$ with negative $x$‑coordinate.
If $\boldsymbol{e}$ is red, set $\boldsymbol{e}_1:=\boldsymbol{e}$.
If $\boldsymbol{e}$ is blue, then by Lemma~\ref{lem:basic-color-prop}(i) there exists a unique point $\boldsymbol{e}^*\in\mathcal{C}$ with
$\|\boldsymbol{e}^*-\boldsymbol{e}\|=a$ and negative $y$‑coordinate; this point is red, and we set $\boldsymbol{e}_1:=\boldsymbol{e}^*$.
In either case we obtain a first red point $\boldsymbol{e}_1\in\mathcal{C}$.

Assume that for some $k\ge1$ we have already constructed red points $\boldsymbol{e}_1,\dots,\boldsymbol{e}_k$ on $\mathcal{C}$
with consecutive central angles $2\beta$.
Starting from $\boldsymbol{e}_k$, move along $\mathcal{C}$ in the positive direction by a central angle $2\beta$
and denote the endpoint by $\boldsymbol{e}_{k+1}$.
Because $\boldsymbol{d}$ is blue and $\mathcal{C}=\mathcal{P}_1(\boldsymbol{d})$, Lemma~\ref{lem:basic-color-prop}(ii) applied to $\boldsymbol{x}=\boldsymbol{d}$
implies that $\boldsymbol{e}_{k+1}$ is red.
The process continues as long as the total central angle covered is less than $2\pi$.
Since each step adds $2\beta$, after $\lceil \pi/\beta\rceil$ steps we obtain $\lceil \pi/\beta\rceil$ distinct red points
with the required spacing. We have $\lceil\pi/\beta\rceil\cdot2\beta\ge2\pi$, so these points wrap fully around the circle.
\end{proof}

$\mathcal{P}_a(\boldsymbol{e}_i)$ is entirely blue since $\boldsymbol{e}_i$ is red by \textnormal{(H1)}.
The family $\{\mathcal{P}_a(\boldsymbol{e}_i)\}_{i=1}^{\lceil \pi/\beta\rceil}$ of blue circles possesses a crucial separation property.

\begin{lemma}\label{lem:separation-property}
Let $\mathcal{C}$ be a great circle on $\mathbb{S}$ and let $\boldsymbol{p}_1,\dots,\boldsymbol{p}_n$ be points on $\mathcal{C}$
successively spaced by central angle $2\beta$, with $n\cdot2\beta\ge2\pi$.
For each $i$ set $\mathcal{S}_i:=\mathcal{P}_a(\boldsymbol{p}_i)=\{\boldsymbol{x}\in\mathbb{S}:\angle\boldsymbol{p}_i\boldsymbol{o}\boldsymbol{x}=\pi-\beta\}$
and $D_i^+:=\{\boldsymbol{x}\in\mathbb{S}:\angle\boldsymbol{x}\boldsymbol{o}\boldsymbol{p}_i>\pi-\beta\}$.
Then:
\begin{enumerate}
\item[(i)] Each $\mathcal{S}_i\cap\mathcal{C}$ consists of two points $\boldsymbol{a}_i,\boldsymbol{b}_i$ spanning a central angle $2\beta$;
namely, if $\varphi_i$ is the angular coordinate of $\boldsymbol{p}_i$, then $\boldsymbol{a}_i,\boldsymbol{b}_i$ correspond to
$\varphi_i+\pi-\beta$ and $\varphi_i-\pi+\beta$.
\item[(ii)] $\boldsymbol{b}_i=\boldsymbol{a}_{i+1}$; the circles $\mathcal{S}_i,\mathcal{S}_{i+1}$ are tangent at this common point.
The set $\{\boldsymbol{a}_1,\dots,\boldsymbol{a}_n,\boldsymbol{b}_1,\dots,\boldsymbol{b}_n\}$ divide $\mathcal{C}$ into $n$ open arcs $I_i$
from $\boldsymbol{a}_i$ to $\boldsymbol{b}_i$, each of length $2\beta$, and $I_i\subset D_i^+$.
\item[(iii)] Any continuous path on $\mathbb{S}$ joining two points of $\mathcal{C}$ at central distance $2\beta$ must intersect some $\mathcal{S}_i$.
\end{enumerate}
\end{lemma}
\begin{proof}
Parametrize $\mathcal{C}$ as $\{\boldsymbol{p}(\varphi)=(r\cos\varphi,r\sin\varphi,0)\mid\varphi\in[0,2\pi)\}$,
with $\boldsymbol{p}_i=\boldsymbol{p}(\varphi_i)$ where $\varphi_i=2(i-1)\beta$.

\noindent\textit{(i)} $\boldsymbol{p}(\varphi)\in\mathcal{S}_i$ iff $|\varphi-\varphi_i|\equiv\pi-\beta\pmod{2\pi}$,
giving the two solutions $\alpha_i:=\varphi_i+\pi-\beta$ and $\beta_i:=\varphi_i-\pi+\beta$.
Set $\boldsymbol{a}_i:=\boldsymbol{p}(\alpha_i)$, $\boldsymbol{b}_i:=\boldsymbol{p}(\beta_i)$. Their angular separation is $2\beta$.

\smallskip
\noindent\textit{(ii)} 
Since $\beta_i=\varphi_i+\pi+\beta=\alpha_{i+1}$, we have $\boldsymbol{b}_i=\boldsymbol{a}_{i+1}$.
We verify that $\mathcal{S}_i$ and $\mathcal{S}_{i+1}$ are tangent at this common point.
After a suitable rotation, $\mathcal{C}$ is the equator $\{z=0\}$; then $\boldsymbol{p}_i,\boldsymbol{p}_{i+1},\boldsymbol{b}_i$
all lie in the plane $z=0$.
A tangent vector to $\mathcal{S}_i$ at $\boldsymbol{b}_i$ is given by $\boldsymbol{b}_i\times\boldsymbol{p}_i$
because $\mathcal{S}_i$ lies in the plane with normal $\boldsymbol{p}_i$.
Similarly, a tangent vector to $\mathcal{S}_{i+1}$ at $\boldsymbol{b}_i$ is $\boldsymbol{b}_i\times\boldsymbol{p}_{i+1}$.
Since both cross products are orthogonal to the plane $z=0$, they are parallel; hence the two circles
are tangent at $\boldsymbol{b}_i$.
The points $\boldsymbol{a}_1,\boldsymbol{a}_2,\dots,\boldsymbol{a}_n,\boldsymbol{b}_n$ appear in order along $\mathcal{C}$ and cut $\mathcal{C}$ into $n$ open arcs $I_i=(\alpha_i,\beta_i)$ of length $2\beta$.
For any $\varphi\in(\alpha_i,\beta_i)$ we have $|\varphi-\varphi_i|>\pi-\beta$, so $\boldsymbol{p}(\varphi)\in D_i^+$.

\smallskip
\noindent\textit{(iii)} Let $\boldsymbol{x},\boldsymbol{y}\in\mathcal{C}$ with $\angle\boldsymbol{x}\boldsymbol{o}\boldsymbol{y}=2\beta$.
If one of them lies on some $\mathcal{S}_i$ we are done.
Otherwise $\boldsymbol{x}\in I_j$ and $\boldsymbol{y}\in I_k$ with $j\neq k$ because any point of $\mathcal{C}$ not on any $\mathcal{S}_i$ lies in exactly one $I_i$.
Now observe that $D_i^+\cap\mathcal{C}=I_i$ for every $i$:
indeed, for $\boldsymbol{p}(\varphi)\in\mathcal{C}$, $\angle\boldsymbol{p}(\varphi)\boldsymbol{o}\boldsymbol{p}_i>\pi-\beta$ iff
$|\varphi-\varphi_i|\in(\pi-\beta,\pi+\beta)$, which is precisely the arc $I_i$.
Consequently, $\boldsymbol{x}\in D_j^+$ while $\boldsymbol{y}\notin D_j^+$ since $\boldsymbol{y}\in I_k$ and $k\neq j$.
Let $\gamma:[0,1]\to\mathbb{S}$ be any continuous path with $\gamma(0)=\boldsymbol{x}$, $\gamma(1)=\boldsymbol{y}$.
Define $t_0:=\sup\{\,t\in[0,1]:\gamma([0,t])\subseteq D_j^+\,\}$.
By continuity, $\gamma(t_0)\in\overline{D_j^+}\setminus D_j^+=\partial D_j^+=\mathcal{S}_j$.
Thus $\gamma$ meets $\bigcup_i\mathcal{S}_i$.
\end{proof}

Applying Lemma~\ref{lem:separation-property} to $\mathcal{C}=\mathcal{P}_1(\boldsymbol{d})$ and the red points
$\boldsymbol{e}_1,\dots,\boldsymbol{e}_{\lceil\pi/\beta\rceil}$, we see that every continuous path joining two points
on $\mathcal{P}_1(\boldsymbol{d})$ at central distance $2\beta$ must cross one of the blue circles $\mathcal{P}_a(\boldsymbol{e}_i)$.
To reach a contradiction, we now construct a red continuous path connecting exactly such a pair of points.

\begin{lemma}\label{lem:red-path}
Let $\boldsymbol{e}_1$ be the first red point obtained in Lemma~\ref{lem:red-sequence-construction}.
Consider the family of unit‑distance circles $\{\mathcal{P}_1(\boldsymbol{h})\}_{\boldsymbol{h}\in\mathcal{P}_a(\boldsymbol{b})}$.
Then there exists a point $\boldsymbol{h}^*\in\mathcal{P}_a(\boldsymbol{b})$ such that $\mathcal{P}_1(\boldsymbol{h}^*)$ and $\mathcal{P}_a(\boldsymbol{e}_1)$ are tangent,
and on the directed closed arc $\wideparen{\boldsymbol{d}\boldsymbol{h}^*}$ of $\mathcal{P}_a(\boldsymbol{b})$ from $\boldsymbol{d}$ to $\boldsymbol{h}^*$
with the positive orientation there is no other point $\boldsymbol{h}$ where the two circles are tangent.
Moreover, on $\wideparen{\boldsymbol{d}\boldsymbol{h}^*}$ there exist two continuous maps
\[
\boldsymbol{\ell},\boldsymbol{m}:\wideparen{\boldsymbol{d}\boldsymbol{h}^*}\longrightarrow\mathbb{S}
\]
such that for every $\boldsymbol{h}$ in the arc,
\begin{enumerate}
    \item[(i)] $\{\boldsymbol{\ell}(\boldsymbol{h}),\boldsymbol{m}(\boldsymbol{h})\}=\mathcal{P}_1(\boldsymbol{h})\cap\mathcal{P}_a(\boldsymbol{e}_1)$;
      at endpoints interpreted as the appropriate limit. And both are blue;
    \item[(ii)] let $\boldsymbol{\ell}'(\boldsymbol{h}),\boldsymbol{m}'(\boldsymbol{h})$ be obtained from $\boldsymbol{\ell}(\boldsymbol{h}),\boldsymbol{m}(\boldsymbol{h})$
      by moving distance $a$ along $\mathcal{P}_1(\boldsymbol{h})$ in the positive direction. Then
      $\boldsymbol{\ell}',\boldsymbol{m}':\wideparen{\boldsymbol{d}\boldsymbol{h}^*}\to\mathbb{S}$ are continuous, and both are red.
\end{enumerate}
Consequently, the union
\[
\mathcal{L}^*:=\{\boldsymbol{\ell}'(\boldsymbol{h}):\boldsymbol{h}\in\wideparen{\boldsymbol{d}\boldsymbol{h}^*}\}\cup
\{\boldsymbol{m}'(\boldsymbol{h}):\boldsymbol{h}\in\wideparen{\boldsymbol{d}\boldsymbol{h}^*}\}
\]
is a continuous red path in $\mathbb{S}$.
Its endpoints $\boldsymbol{e}':=\boldsymbol{\ell}'(\boldsymbol{d})$ and $\boldsymbol{e}'':=\boldsymbol{m}'(\boldsymbol{d})$ lie on
$\mathcal{C}=\mathcal{P}_1(\boldsymbol{d})$ and satisfy $\angle \boldsymbol{e}'\boldsymbol{o}\boldsymbol{e}''=2\beta$.
\end{lemma}
\begin{proof}
We first show that a tangency point $\boldsymbol{h}^*$ exists.

\begin{claim}\label{claim:existence-tangency}
There exists a point $\boldsymbol{h}^*\in\mathcal{P}_a(\boldsymbol{b})$ for which 
$\mathcal{P}_1(\boldsymbol{h}^*)$ and $\mathcal{P}_a(\boldsymbol{e}_1)$ are tangent.
\end{claim}
\begin{poc}
The circle $\mathcal{P}_a(\boldsymbol{e}_1)$ lies in the plane $\boldsymbol{x}\cdot\boldsymbol{e}_1 = (1-a^2)/2$,
while $\mathcal{P}_1(\boldsymbol{h})$ is the great circle with normal $\boldsymbol{h}$.
Let $\alpha = \angle(\boldsymbol{h},\boldsymbol{e}_1)$. 
The distance from $\boldsymbol{o}$ to the intersection line of the two planes is $d = |1-a^2|/(2r\sin\alpha)$.
The circles intersect iff $d\le r$, i.e. $|\cos\alpha| \le a\sqrt{2-a^2}$.

We now exhibit a point $\boldsymbol{h}_0\in\mathcal{P}_a(\boldsymbol{b})$ for which the two circles are disjoint.
The point $\boldsymbol{e}_1$ constructed in Lemma~\ref{lem:red-sequence-construction} is either
\[
\boldsymbol{e}= \Bigl(\frac{1-a^2}{\sqrt{2}},\;0,\;-\frac{a}{\sqrt{2}}\sqrt{2-a^2}\Bigr)
\quad\text{or}\quad
\boldsymbol{e}^* = \frac{1}{\sqrt{2}}\Bigl( (1-a^2)^2,\; -a\sqrt{2-a^2},\; -a(1-a^2)\sqrt{2-a^2} \Bigr).
\]

\noindent
\textit{Case $\boldsymbol{e}_1=\boldsymbol{e}$.}
Take
\[
\boldsymbol{h}_0=\Bigl(-\frac{a}{\sqrt2}\sqrt{2-a^{2}},\;0,\;\frac{1-a^{2}}{\sqrt2}\Bigr).
\]
Then
\[
\boldsymbol{e}\cdot\boldsymbol{h}_0
= -\frac{a(1-a^2)\sqrt{2-a^2}}{2} + 0 - \frac{a(1-a^2)\sqrt{2-a^2}}{2}
= -a(1-a^2)\sqrt{2-a^2},
\]
so $|\cos\alpha| = |\boldsymbol{e}\cdot\boldsymbol{h}_0|/r^2 = 2a(a^2-1)\sqrt{2-a^2}$.
Since $a>\sqrt{3/2}$, we have $a^2-1 > 1/2$, therefore,
$2(a^2-1)\sqrt{2-a^2} > \sqrt{2-a^2}$, giving $|\cos\alpha| > a\sqrt{2-a^2}$.

\noindent
\textit{Case $\boldsymbol{e}_1=\boldsymbol{e}^*$.}
Set $s = a\sqrt{2-a^2}$ and $c = 1-a^2$, and take
\[
\boldsymbol{h}_0 = \frac1{\sqrt2}\Bigl(
-\frac{sc^2}{\sqrt{1-s^2c^2}},\;
\frac{s^2}{\sqrt{1-s^2c^2}},\;
c\Bigr).
\]
A direct computation yields
\[
\boldsymbol{h}_0\cdot\boldsymbol{e}^* = \frac{s}{2}\Bigl(\sqrt{1-s^2c^2}+c^2\Bigr)
= \frac{s}{2}\Bigl(\sqrt{1-c^2+c^4}+c^2\Bigr).
\]
Hence $|\cos\alpha| = s\bigl(\sqrt{1-c^2+c^4}+c^2\bigr)$.
Since $a\in(\sqrt{3/2},\sqrt{2})$, we have $u = c^2\in(1/4,1)$. For $u = c^2\in(1/4,1)$,
the function $f(u)=\sqrt{1-u+u^2}+u$ satisfies $f(u)>1$,
so $|\cos\alpha| > s = a\sqrt{2-a^2}$.

In both cases we have $|\cos\alpha| > a\sqrt{2-a^2}$, so the two circles are disjoint.
On the other hand, for $\boldsymbol{h}=\boldsymbol{d}$ the circles $\mathcal{P}_1(\boldsymbol{d})$ and $\mathcal{P}_a(\boldsymbol{e}_1)$
intersect in two points.
As $\boldsymbol{h}$ varies continuously from $\boldsymbol{d}$ to $\boldsymbol{h}_0$ along $\mathcal{P}_a(\boldsymbol{b})$
in the positive direction, the circles deform continuously from intersecting to disjoint.
By continuity, there must be an intermediate $\boldsymbol{h}^*$ where they are tangent.
\end{poc}

Choose $\boldsymbol{h}^*$ to be the first such tangency along the arc, so that on the open arc $\wideparen{\boldsymbol{d}\boldsymbol{h}^*}$
the circles always intersect at two distinct points.

\begin{claim}
On $\wideparen{\boldsymbol{d}\boldsymbol{h}^*}$ there exist continuous maps $\boldsymbol{\ell},\boldsymbol{m}$ satisfying (i) and (ii).
\end{claim}
\begin{poc}
For $\boldsymbol{h}\in\wideparen{\boldsymbol{d}\boldsymbol{h}^*}$, the circles $\mathcal{P}_1(\boldsymbol{h})$ and $\mathcal{P}_a(\boldsymbol{e}_1)$
lie in the planes $\Pi_{\boldsymbol{h}}:\boldsymbol{x}\cdot\boldsymbol{h}=0$ and $\Pi_1:\boldsymbol{x}\cdot\boldsymbol{e}_1=(1-a^2)/2$.
Their intersection line can be written as $L_{\boldsymbol{h}}=\{\boldsymbol{u}_0(\boldsymbol{h})+t\boldsymbol{v}(\boldsymbol{h})\}$ with
$\boldsymbol{v}(\boldsymbol{h}):=\frac{\boldsymbol{e}_1\times\boldsymbol{h}}{\|\boldsymbol{e}_1\times\boldsymbol{h}\|}$
and $\boldsymbol{u}_0(\boldsymbol{h})$ the projection of $\boldsymbol{o}$ onto $\Pi_1\cap\Pi_{\boldsymbol{h}}$.
Substituting into $\|\boldsymbol{x}\|^2=r^2$ gives the quadratic equation
\[
t^2+2\bigl(\boldsymbol{u}_0(\boldsymbol{h})\cdot\boldsymbol{v}(\boldsymbol{h})\bigr)t+
\bigl(\|\boldsymbol{u}_0(\boldsymbol{h})\|^2-r^2\bigr)=0.
\]
Its discriminant $\Delta(\boldsymbol{h})$ is continuous on the closed arc, strictly positive on the open arc,
and zero at $\boldsymbol{h}^*$.
Define $t_{1,2}(\boldsymbol{h})$ as the two roots and take the common limit $t^*$ at $\boldsymbol{h}^*$;
these are continuous on the whole arc.
Set
\[
\boldsymbol{\ell}(\boldsymbol{h}):=\boldsymbol{u}_0(\boldsymbol{h})+t_1(\boldsymbol{h})\boldsymbol{v}(\boldsymbol{h}),\qquad
\boldsymbol{m}(\boldsymbol{h}):=\boldsymbol{u}_0(\boldsymbol{h})+t_2(\boldsymbol{h})\boldsymbol{v}(\boldsymbol{h}).
\]
By construction $\{\boldsymbol{\ell}(\boldsymbol{h}),\boldsymbol{m}(\boldsymbol{h})\}=\mathcal{P}_1(\boldsymbol{h})\cap\mathcal{P}_a(\boldsymbol{e}_1)$;
both lie on $\mathcal{P}_a(\boldsymbol{e}_1)$, hence are blue.

Moving distance $a$ along $\mathcal{P}_1(\boldsymbol{h})$ in the positive direction (as defined before Lemma~\ref{lem:red-sequence-construction}) is a continuous operation,
so $\boldsymbol{\ell}'$ and $\boldsymbol{m}'$ are continuous.
Because $\boldsymbol{\ell}(\boldsymbol{h})$ is blue and $\|\boldsymbol{\ell}(\boldsymbol{h})-\boldsymbol{\ell}'(\boldsymbol{h})\|=a$,
Lemma~\ref{lem:basic-color-prop}(i) forces $\boldsymbol{\ell}'(\boldsymbol{h})$ to be red; similarly $\boldsymbol{m}'(\boldsymbol{h})$ is red.
\end{poc}

Hence $\mathcal{L}^*$ is a union of two red continuous curves; they meet at
$\boldsymbol{\ell}'(\boldsymbol{h}^*)=\boldsymbol{m}'(\boldsymbol{h}^*)$ and together form a continuous red path from
$\boldsymbol{e}'=\boldsymbol{\ell}'(\boldsymbol{d})$ to $\boldsymbol{e}''=\boldsymbol{m}'(\boldsymbol{d})$;
that is, one may travel along $\boldsymbol{\ell}'$ from $\boldsymbol{e}'$ to the junction
$\boldsymbol{\ell}'(\boldsymbol{h}^*)=\boldsymbol{m}'(\boldsymbol{h}^*)$, and then continue along $\boldsymbol{m}'$ to reach $\boldsymbol{e}''$.

One verifies that $\boldsymbol{e}',\boldsymbol{e}''\in\mathcal{P}_1(\boldsymbol{d})$ and
$\angle\boldsymbol{e}'\boldsymbol{o}\boldsymbol{e}''=2\beta$;
indeed, $\boldsymbol{\ell}(\boldsymbol{d})$ and $\boldsymbol{m}(\boldsymbol{d})$ are the two points on $\mathcal{C}=\mathcal{P}_1(\boldsymbol{d})$
at distance $a$ from $\boldsymbol{e}_1$, hence their central angles with $\boldsymbol{e}_1$ equal $\theta$.
Writing $\varphi_*$ for the angular coordinate of $\boldsymbol{e}_1$, these two points correspond to
$\varphi_*+\theta$ and $\varphi_*-\theta$ (mod $2\pi$).
Moving distance $a$ (i.e. central angle $\theta$) along $\mathcal{C}$ in the positive direction from them yields
$\boldsymbol{e}'$ at $\varphi_*+2\theta$ and $\boldsymbol{e}''$ at $\varphi_*$.
The angular separation between $\varphi_*+2\theta$ and $\varphi_*$ is $2\theta\equiv 2\pi-2\theta=2\beta\pmod{2\pi}$,
and since $\theta>2\pi/3$, we have $2\theta>\pi$, so the smaller central angle is indeed $2\pi-2\theta=2\beta$.
Hence $\angle\boldsymbol{e}'\boldsymbol{o}\boldsymbol{e}''=2\beta$, and in particular $\boldsymbol{e}'\neq\boldsymbol{e}''$.
\end{proof}

\begin{proof}[Completion of the second range]
Lemma~\ref{lem:red-path} provides a continuous red path $\mathcal{L}^*$ joining two points
$\boldsymbol{e}',\boldsymbol{e}''\in\mathcal{P}_1(\boldsymbol{d})$ with $\angle\boldsymbol{e}'\boldsymbol{o}\boldsymbol{e}''=2\beta$.
By Lemma~\ref{lem:separation-property}(iii), any such path must intersect some blue circle $\mathcal{P}_a(\boldsymbol{e}_i)$.
But every point of $\mathcal{L}^*$ is red, a contradiction.
This completes the proof for $a\in(\sqrt{3/2},\sqrt{2})$.
\end{proof}

Together with the first parameter range, this completes the proof of Theorem~\ref{thm:2sphere-isosceles}.

\end{proof}

\subsection{Proof of Theorem~\ref{thm:main2}}
We prove that
\(
\mathbb{S}^{3}\!\left(1/\sqrt{2}\right)\rightarrow (I_a;I_a).
\)
Let
\(
\mathbb{S}^{3}:=\mathbb{S}^{3}\!\left(1/\sqrt{2}\right)\subseteq \mathbb{R}^{4},
\)
and let
\[
\mathbb{S}_{\mathrm{eq}}
:=
\{(x,y,z,w)\in \mathbb{S}^{3}:w=0\}
=
\{(x,y,z,0):x^{2}+y^{2}+z^{2}=1/2\}
\]
be the equatorial \(2\)-sphere. Then \(\mathbb{S}_{\mathrm{eq}}\) is isometric to
\(\mathbb{S}^{2}(1/\sqrt2)\).

Fix an arbitrary red-blue coloring of \(\mathbb{S}^{3}\). We distinguish two cases according to the coloring of antipodal pairs.

First suppose that there exist antipodal points of different colors. After a rotation, we may assume that
\(
\boldsymbol{b}=(0,0,0,1/\sqrt2)
\text{ is red, }
\boldsymbol{a}=(0,0,0,-1/\sqrt2)
\text{ is blue.}
\)
Apply Lemma~\ref{lem:monoline} to the induced coloring on \(\mathbb{S}_{\mathrm{eq}}\). Then there exists a monochromatic pair
\(
\{\boldsymbol{p}_{1},\boldsymbol{p}_{2}\}\subseteq \mathbb{S}_{\mathrm{eq}}
\)
with
\(
\|\boldsymbol{p}_{1}-\boldsymbol{p}_{2}\|=a.
\)
Since every point of \(\mathbb{S}_{\mathrm{eq}}\) has fourth coordinate \(0\), we have
\[
\|\boldsymbol{b}-\boldsymbol{p}_{i}\|^{2}
=
\|\boldsymbol{b}\|^{2}+\|\boldsymbol{p}_{i}\|^{2}
=
\frac12+\frac12
=
1,
\]
and similarly \(\|\boldsymbol{a}-\boldsymbol{p}_{i}\|=1\) for \(i=1,2\).
Hence:
if \(\boldsymbol{p}_{1},\boldsymbol{p}_{2}\) are red, then
  \(\{\boldsymbol{b},\boldsymbol{p}_{1},\boldsymbol{p}_{2}\}\) is a red isosceles triangle with base \(a\) and legs \(1\);
if \(\boldsymbol{p}_{1},\boldsymbol{p}_{2}\) are blue, then
  \(\{\boldsymbol{a},\boldsymbol{p}_{1},\boldsymbol{p}_{2}\}\) is a blue isosceles triangle with base \(a\) and legs \(1\). Thus the first case is settled.

Now suppose that every antipodal pair on \(\mathbb{S}^{3}\) has the same color. If every point is blue, then any inscribed isosceles triangle with base \(a\) and legs \(1\) is blue and we are done. Otherwise there exists a red point, and hence its antipode is also red. After a rotation, we may assume that
\(
\boldsymbol{b}=(0,0,0,1/\sqrt2),
\boldsymbol{a}=(0,0,0,-1/\sqrt2)
\)
are both red.

Apply Theorem~\ref{thm:2sphere-isosceles} to the induced coloring on \(\mathbb{S}_{\mathrm{eq}}\). Then either:
\begin{itemize}
    \item there exists a red pair
    \(\{\boldsymbol{p}_{1},\boldsymbol{p}_{2}\}\subseteq \mathbb{S}_{\mathrm{eq}}\) with
    \(
    \|\boldsymbol{p}_{1}-\boldsymbol{p}_{2}\|=a
    \), or
    \item there exists a blue isosceles triangle 
    \(\{\boldsymbol{q}_{1},\boldsymbol{q}_{2},\boldsymbol{q}_{3}\}\subseteq \mathbb{S}_{\mathrm{eq}}\) with base \(a\) and legs \(1\).
\end{itemize}
In the second case we are done immediately. In the first case,
\(\{\boldsymbol{a},\boldsymbol{p}_{1},\boldsymbol{p}_{2}\}\) is a red isosceles triangle with base \(a\) and legs \(1\), since \(\|\boldsymbol{a}-\boldsymbol{p}_{i}\|=1\) for \(i=1,2\). This finishes the proof.

\qed

\section*{Acknowledgments}
Xiaochen Zhao would like to thank Zixiang Xu for his patient guidance and numerous
helpful writing suggestions, which significantly improved the presentation of
this paper, as well as for several useful discussions during the early stages
of this work.

\bibliographystyle{abbrv}
\bibliography{sphere}

\appendix
\renewcommand{\thesection}{Appendix \Alph{section}}
\section{Algebraic details for the case \texorpdfstring{$a/r=\sqrt3$}{a/r=√3}}
\label{app:sqrt3-derivation}

Consider the system of equations for a point $\boldsymbol{x}=(x,y,z)$ on the two circles:
\begin{equation}\label{eq:sys}
\boldsymbol{x}\cdot\boldsymbol{k}(\alpha)=-\frac16,\qquad
\boldsymbol{x}\cdot\boldsymbol{a}_2=-\frac16,\qquad
\|\boldsymbol{x}\|^2=\frac13,
\end{equation}
where
\[
\boldsymbol{k}(\alpha)=\Bigl(\frac{\cos\alpha}{2},\frac{\sin\alpha}{2},-\frac{1}{2\sqrt3}\Bigr),\qquad
\boldsymbol{a}_2=\Bigl(\frac12,\frac14,\frac1{4\sqrt3}\Bigr).
\]

\noindent\textbf{Step 1: Express $x$ and $y$ in terms of $z$.}
Multiplying the first two equations by $2$ and $4$ respectively, and setting $t = z/\sqrt{3}$, we obtain the linear system
\begin{equation}\label{eq:lin}
\cos\alpha\,x + \sin\alpha\,y = t - \frac13,\qquad
2x + y = -t - \frac23.
\end{equation}
If $\Delta_0 \coloneqq \cos\alpha - 2\sin\alpha \neq 0$, solving \eqref{eq:lin} for $x$ and $y$ gives
\begin{equation}\label{eq:xy}
x = \frac{(1+\sin\alpha)t + \frac{2\sin\alpha-1}{3}}{\Delta_0},\qquad
y = \frac{-(2+\cos\alpha)t + \frac{2(1-\cos\alpha)}{3}}{\Delta_0}.
\end{equation}
The case $\Delta_0 = 0$ will be treated separately; the formulas remain valid by continuity.

\noindent\textbf{Step 2: Substitute into the norm condition.}
Since $z = \sqrt{3}\,t$, the condition $\|\boldsymbol{x}\|^2 = x^2 + y^2 + z^2 = 1/3$ becomes $x^2 + y^2 + 3t^2 = 1/3$.  Substituting \eqref{eq:xy} and simplifying yields a quadratic equation in $t$:
\begin{equation}\label{eq:quad}
a t^2 + b t + c = 0,
\end{equation}
where, with the abbreviations
\[
\Delta^2 \coloneqq 1 + 2\cos\alpha + \sin\alpha,\qquad
\Sigma^2 \coloneqq 3 - 2\cos\alpha - \sin\alpha = 4 - \Delta^2,
\]
the coefficients are
\begin{align*}
a &= 4 + 2\Delta^2 + 3\Delta_0^2,\\
b &= \frac{2}{3}(\Delta^2 - 4) = -\frac{2}{3}\Sigma^2,\\
c &= \frac{9 - 14\Sigma^2 + 3(\Sigma^2)^2}{9}.
\end{align*}
Using the identity $\Delta_0^2 = 4 + 2\Delta^2 - \Delta^4$, we obtain
\[
a = (4 - \Delta^2)(4 + 3\Delta^2) = \Sigma^2 D,\qquad
D \coloneqq 4 + 3\Delta^2 = 7 + 6\cos\alpha + 3\sin\alpha.
\]
Moreover, the discriminant simplifies to
\begin{equation}\label{eq:disc}
b^2 - 4ac = 4 \Sigma^2 \Delta^2 \Delta_0^2 .
\end{equation}

\noindent\textbf{Step 3: Solve for $z$.}
Solving \eqref{eq:quad} and noting $\Sigma = \sqrt{\Sigma^2} > 0$, we obtain
\[
t = \frac{\frac{2}{3}\Sigma^2 \pm 2\Sigma\Delta|\Delta_0|}{2\Sigma^2 D}
    = \frac{1}{3D} \pm \frac{\Delta\Delta_0}{\Sigma D},
\]
where the sign $\pm$ absorbs the absolute value of $\Delta_0$.  Multiplying by $\sqrt{3}$ gives the $z$‑coordinate
\begin{equation}\label{eq:z}
z = \frac{1}{\sqrt{3}\,D} \pm \frac{3\Delta(\cos\alpha - 2\sin\alpha)}{\sqrt{3}\,\Sigma D}
    = \frac{1 \pm \dfrac{3\Delta}{\Sigma}(\cos\alpha - 2\sin\alpha)}{\sqrt{3}\,D}.
\end{equation}
Substituting these values of $t$ into \eqref{eq:xy} yields the expressions for $x$ and $y$; the two resulting points $\boldsymbol{d}_1,\boldsymbol{d}_2$ are exactly those stated in Subsection~\ref{subsec:2.2}.

\noindent\textbf{Step 4: Reality conditions.}
All square‑roots appearing in the formula must be real.  We observe:
\begin{itemize}
\item $\Sigma^2 = 3 - 2\cos\alpha - \sin\alpha \ge 3 - \sqrt5 > 0$, so $\Sigma$ is real and non‑zero for every $\alpha$;
\item $D = 7 + 6\cos\alpha + 3\sin\alpha > 0$ for all $\alpha$;
\item The only non‑trivial restriction is $\Delta^2 \ge 0$, i.e.\ $1 + 2\cos\alpha + \sin\alpha \ge 0$.
\end{itemize}
The equation $1 + 2\cos\alpha + \sin\alpha = 0$ has the two solutions $\alpha = 2\arctan 3$ and $\alpha = \frac{3\pi}{2}$ in $[0,2\pi)$.  Hence real solutions exist exactly for
\begin{equation}\label{eq:domain}
\alpha \in \bigl[0,\,2\arctan 3\bigr] \cup \bigl[\tfrac{3\pi}{2},\,2\pi\bigr).
\end{equation}
When $\Delta_0 = 0$ (i.e.\ $\alpha = \arctan(1/2)$, which lies inside the admissible interval), the discriminant~\eqref{eq:disc} vanishes; the two intersection points coincide and the formulas~\eqref{eq:z} still give the correct common value.

\noindent\textbf{Step 5: Continuity of the curve $\Gamma$.}
The curve $\Gamma$ is parametrized by the antipodal point $\boldsymbol{d}_1'$ on the small circle $\mathcal{P}(\boldsymbol{k})$.  At $\alpha = 2\arctan 3$, one has $\cos\alpha = -\frac45$, $\sin\alpha = \frac35$, and a direct substitution yields
\[
\boldsymbol{d}_1'(2\arctan 3) = \Bigl(\frac12,\;\frac14,\;\frac{1}{4\sqrt3}\Bigr).
\]
At $\alpha = \frac{3\pi}{2}$ we have $\cos\alpha = 0$, $\sin\alpha = -1$, which leads to exactly the same coordinates.  Therefore the two pieces of the parameterization meet continuously, and $\Gamma$ is a closed continuous curve on $\mathbb{S}$.

\end{document}